%% file: main.tex
\documentclass{article}
\usepackage[letterpaper,margin=1in]{geometry}

\newif\ifarxiv 
\arxivtrue
\date{\today} %

\usepackage[utf8]{inputenc} %
\usepackage[T1]{fontenc}    %
\usepackage{url}            %

\usepackage{lmodern}
\usepackage{microtype}      %

\usepackage{hhline}
\usepackage{multirow}

\usepackage[utf8]{inputenc} 
\usepackage[T1]{fontenc}

\usepackage{multicol}
\usepackage{silence}
\usepackage{amsfonts,thmtools,thm-restate}
\usepackage{mathtools}
\usepackage{xparse}
\usepackage{enumitem} 
\usepackage{etoolbox}
\usepackage{mathtools}
\usepackage{complexity}
\usepackage{svg}
\usepackage{xcolor, soul}
\usepackage{tablefootnote}
\usepackage[bb=boondox]{mathalpha} 
\definecolor{lightgrey}{rgb}{0.9,0.9,0.9}
\sethlcolor{lightgrey}

\usepackage[colorlinks=true, urlcolor=blue, linktoc=all]{hyperref}

\definecolor{mygray}{rgb}{0.6,0.6,0.6}

\usepackage[capitalise,nameinlink,noabbrev]{cleveref}  
\crefname{equation}{}{}

\newtheorem{theorem}{Theorem}[section]
\newtheorem{proposition}[theorem]{Proposition}
\newtheorem{lemma}[theorem]{Lemma}
\newtheorem{corollary}[theorem]{Corollary}
\newtheorem{definition}[theorem]{Definition}

\newtheorem{remark}[theorem]{Remark}

\hypersetup{
    colorlinks=true,
    linkcolor=blue,
    filecolor=magenta,
    urlcolor=blue,
    citecolor=blue,
    pdfborder={0 0 0}
}

\usepackage{tikz}
\usepackage{float}
\usepackage{wrapfig}
\usepackage{subcaption}

\usepackage{times}
\usepackage{xspace}

\input{config.tex}
\input{definitions.tex}
\input{algorithm_config.tex}
\input{bibliography_config.tex}

\renewcommand{\thefootnote}{\fnsymbol{footnote}}

\title{Complexity of Classical Acceleration for $\ell_1$-Regularized PageRank}

\author{%
  Kimon Fountoulakis\thanks{Equal contribution.}\\
  University of Waterloo, Canada\\
  \texttt{\href{mailto:kimon.fountoulakis@uwaterloo.ca}{kimon.fountoulakis@uwaterloo.ca}}
  \and
  David Martínez-Rubio\footnotemark[1]\\
  IMDEA Software Institute, Madrid, Spain\\
  \texttt{\href{mailto:david.martinezrubio@imdea.org}{david.martinezrubio@imdea.org}}
}

\begin{document}

\maketitle

\renewcommand{\thefootnote}{\arabic{footnote}}
\setcounter{footnote}{0}

\begin{abstract}
We study the degree-weighted work required to compute $\ell_1$-regularized PageRank using the standard accelerated proximal-gradient method (FISTA) \citep{beck2017first}.
For non-accelerated methods (ISTA) \citep{beck2017first}, the best known worst-case work is \inlinesmash{\widetilde{O}((\alpha\rho)^{-1})}, where $\alpha$ is the teleportation parameter and
$\rho$ is the $\ell_1$-regularization parameter.
It is not known whether classical acceleration methods can improve $1/\alpha$ to $1/\sqrt{\alpha}$ while preserving the $1/\rho$ locality scaling, or whether they can be asymptotically worse. For FISTA, we show a negative result by constructing a family of instances for which standard FISTA is asymptotically worse than ISTA. On the positive side, we analyze FISTA on a slightly over-regularized objective and show that, under a confinement condition, all spurious activations remain inside a boundary set $\mathcal{B}$.
This yields a bound consisting of an accelerated \inlinesmash{(\rho\sqrt{\alpha})^{-1}\log(\alpha/\varepsilon)} term plus a boundary overhead
\inlinesmash{\sqrt{\vol(\mathcal{B})}/(\rho\alpha^{3/2})}. We also provide graph-structural sufficient conditions that imply such confinement.
\end{abstract}

\section{Introduction}

Personalized PageRank (PPR) is a diffusion primitive that, from a seed node or distribution $s$, produces a nonnegative score vector concentrated near $s$, with applications to local graph clustering and ranking \citep{andersen2006local,gleich2015pagerank}. A key requirement is \emph{locality}: the running time to compute the vector should scale with the size of the target set of nodes, not the full graph. $\ell_1$ regularization is useful here because it induces sparsity. In the $\ell_1$-regularized PageRank formulation \citep{pmlr-v32-gleich14,fountoulakis2019variational}, one solves a strongly convex problem whose minimizer is sparse and nonnegative\footnote{A simple corollary of \cite{fountoulakis2019variational}: proximal-gradient iterates started at zero are nondecreasing.}. Concretely, for teleportation parameter $\alpha\in(0,1]$ and sparsity parameter $\rho>0$, we consider problems of the form
\[
\min_{x\in\R^n}
\;\underbrace{\frac12 x^\top Qx-\alpha\langle D^{-1/2}s,x\rangle}_{\text{smooth PageRank quadratic}}
\;+\;\underbrace{\alpha\rho\|D^{1/2}x\|_1}_{\text{$\ell_1$ sparsity penalty}},
\]
where $D$ is the degree matrix and $Q$ is a symmetric, scaled and shifted, Laplacian matrix, see
 \Cref{sec:preliminaries}.
Let $x^\star$ denote the unique minimizer %
and let $S^\star\defi\supp(x^\star)$ be its support. 

For the above problem, the primitives of first-order methods can be implemented locally: if an iterate is supported on a set $S$, evaluating its gradient and
performing a proximal gradient step only requires accessing edges incident to $S$. This motivates the degree-weighted work model \citep{fountoulakis2019variational}, in which  scanning the neighborhood of a vertex $i$ costs $d_i$ work, and the cost of a set $S$ of non-zero nodes is $\vol(S)\defi\sum_{i\in S} d_i$. The total work of an algorithm is the cumulative number of neighbor accesses with repetition performed over its execution.

\textbf{Motivation.} Accelerated first-order methods are worst-case optimal in gradient evaluations for smooth convex problems \citep{nesterov2004introductory,beck2009fast}. For $\ell_1$-regularized PageRank, however, the relevant measure is degree-weighted work, so the cost of an iteration depends on which coordinates are active. On undirected graphs, ISTA reaches a prescribed accuracy with worst-case total work \inlinesmash{\widetilde{O}((\alpha\rho)^{-1})} \citep{fountoulakis2019variational}. It is not known whether classical acceleration methods can improve $1/\alpha$ to $1/\sqrt{\alpha}$ while preserving the $1/\rho$ locality scaling, or whether they can be asymptotically worse. We study this question for the standard one-gradient-per-iteration FISTA method. The challenge is that extrapolation can create transient activations outside \(S^\star\), and even a few such activations can touch high-degree nodes and dominate the total work. We provide a negative worst-case result and a conditional upper bound on the total work.

\textbf{Worst-case negative result.}
We show that, on star graph instances with center degree $m$, ISTA remains supported on the seed leaf and therefore has graph-size-independent work. In contrast, standard FISTA  activates the high-degree center after two extrapolated steps, and incurs $\Omega(m)$ total degree-weighted work before reaching a fixed target accuracy. Thus, standard FISTA can be asymptotically worse than ISTA in the worst case.

\textbf{Total work bound and sufficient conditions.}
For FISTA run on a slightly over-regularized objective, under an explicit confinement condition ensuring that all spurious activations remain within a
boundary set $\mathcal{B}$, we obtain a work bound of the form
\[
\widetilde{O}\left(
\frac{1}{\rho\sqrt{\alpha}}\log\left(\frac{\alpha}{\varepsilon}\right)
\;+\;
\frac{\sqrt{\vol(\mathcal{B})}}{\rho\alpha^{3/2}}
\right).
\]
The first term is the accelerated cost of converging on the over-regularized problem;
the second term is an explicit overhead capturing the cumulative cost for exploring spurious nodes. We also give graph-structural sufficient conditions: a no-percolation criterion that makes the confinement hypothesis explicit once a candidate core set is specified. When this criterion holds for a set
\(S\) containing the relevant optimal support, it guarantees that momentum-induced activations cannot
percolate arbitrarily far into the graph: for all iterations \(k\), the iterates remain supported in
\(S \cup \bdry S\). In particular, any activation outside \(S\) is confined to the
vertex boundary \(\mathcal{B} \defi \bdry S\), so the locality overhead in our work bound is governed by the
boundary volume \(\vol(\mathcal{B})=\vol(\bdry S)\). This makes the second term interpretable as the cost of probing only the immediate neighborhood of the core region.

\textbf{Contributions.} Our main contributions can be summarized as follows.

\begin{itemize}[leftmargin=*]

    \item \textit{From KKT slack to cumulative spurious work, via over-regularization.}
    We show that activating an inactive coordinate forces a quantitative jump in per-iteration work controlled by its KKT slack, which together with FISTA's geometric contraction bounds cumulative spurious work. To avoid dependence on arbitrarily small slacks, we analyze a slightly over-regularized problem and use regularization-path monotonicity \citep{ha2021statistical} to absorb nearly active nodes into the true support, charging only clearly inactive ones.

    \item \textit{A conditional work bound for classic FISTA on a slightly over-regularized objective.}
    Under a boundary confinement condition (spurious activations stay within a boundary set $\mathcal{B}$),
    we obtain an explicit work bound with an accelerated term
    \inlinesmash{\widetilde{O}((\rho\sqrt{\alpha})^{-1}\log(\alpha/\varepsilon))} plus a boundary
    overhead quantified by $\sqrt{\vol(\mathcal{B})}/(\rho\alpha^{3/2})$ (cf.\ \Cref{thm:double_reg_work}).

    \item \textit{Graph-structural confinement guarantees and degree-based non-activation.}
    We give a sufficient no-percolation condition for boundary confinement, and in \cref{sec:fista_degree_nonactivation} we give a sufficient degree condition under which high-degree inactive nodes provably never activate under over-regularization.

    \item \textit{A negative worst-case result for standard FISTA.}
    We construct seed-at-leaf star instances for which standard FISTA activates a high-degree center and incurs $\Omega(m)$ total degree-weighted work to reach a fixed target accuracy, while ISTA remains supported on the seed and reaches the same target with
    \inlinesmash{O\!\left(\frac{1}{\alpha}\log\frac{1}{\varepsilon}\right)} work independent of $m$ (cf.\ \Cref{prop:lower_bound}).
    Thus standard FISTA can be asymptotically worse than ISTA in the degree-weighted work model.

\end{itemize}

\section{Related work}
\label{sec:related-work}

Personalized PageRank (PPR) is widely used for ranking and network analysis \citep{gleich2015pagerank}. A foundational locality result of \citet{andersen2006local} shows that an $\varepsilon$-approximate PPR vector can be computed in time $\tilde{O}(1/(\alpha\varepsilon))$ independent of graph size, enabling local graph partitioning.

\textbf{Variational formulations and worst-case locality for non-accelerated methods.} The variational perspective of \citet{pmlr-v32-gleich14,fountoulakis2019variational} shows that local clustering guarantees can be obtained by solving an $\ell_1$-regularized PageRank objective. \citet{fountoulakis2019variational} show that ISTA can be implemented locally with total work $\tilde{O}((\alpha\rho)^{-1})$, giving a worst-case graph-size-independent bound. An analogous result for standard accelerated methods, such as FISTA is an open problem. A related line studies statistical and path properties of these objectives; for instance, \citet{ha2021statistical} analyze the $\ell_1$-regularized PageRank solution path, which we leverage when reasoning about over-regularization.

\textbf{The COLT'22 open problem on acceleration and its solutions/attempts.} \citep{fountoulakis2022open} posed the COLT'22 open problem of whether one can obtain a provable accelerated algorithm for $\ell_1$-regularized PageRank with work $\widetilde{O}((\rho\sqrt{\alpha})^{-1})$, improving the $\alpha$-dependence by a factor of $1/\sqrt{\alpha}$ over ISTA while preserving locality. They emphasized that existing ISTA analyses do not cover acceleration and that it was unclear whether worst-case work might even degrade under acceleration. The first affirmative solution is due to \citet{martinezrubio2023accelerated}, who design accelerated
algorithms that retain sparse updates. Their method \emph{ASPR}
uses an expanding-subspace (outer-inner) scheme: it grows a set of ``good'' coordinates and runs an
accelerated projected gradient subroutine on the restricted feasible set. This yields a worst-case
bound of
$
\bigo{|S^\star|\widetilde{\operatorname{vol}}(S^\star)\alpha^{-1 / 2}\log(1 / \epsilon) + |S^\star|\vol(S^\star)}
$, where $S^\star$ is the support of the optimal solution and $\widetilde{\operatorname{vol}}(S^\star)$ is the number of edges of the subgraph formed only by nodes in $S^\star$. Compared to $\bigo{\vol(S^\star)\alpha^{-1}\log(1 / \epsilon)} = \bigotilde{1 / (\rho\alpha)}$ of ISTA, the solution improves the $\alpha$-dependence with a different sparsity dependence than ISTA.  In this work, we provide a support-sensitive, degree-weighted work analysis of the classic one-gradient-per-iteration FISTA method. Our contribution is algorithmically quite different, and the upper bound establishes a new trade-off under explicit confinement conditions on a candidate core set. \citet{zhou2024iterative} study locality for accelerated linear-system solvers and obtain an accelerated guarantee under an additional run-dependent residual-reduction assumption. In contrast, our bounds for standard FISTA are explicit and quantify when acceleration helps or hurts total work.

\textbf{Support identification, strict complementarity.} Our complementarity-gap viewpoint connects to the constraint-identification literature: under strict-complementarity-type conditions, proximal and proximal-gradient methods identify the optimal support in finitely many iterations \citep{burke1994exposing,nutini2018activeset,sun2019manifoldid}, and acceleration can delay identification via oscillations \citep{bareilles2020interplay} (see also \citep{wolfe1970convergence,guelat1986somecomments,garber2020revisiting}). These results are iteration-complexity statements under unit-cost steps and do not quantify locality-aware total work for accelerated methods.

\section{Preliminaries and notation}\label{sec:preliminaries}

We assume undirected and unweighted graphs, and
we use $[n] \defi \{1,\dots,n\}$. $\norm{\cdot}_2$ denotes the Euclidean norm and $\norm{\cdot}_1$ denotes the $\ell_1$ norm.
For a set $S\subseteq [n]$ we write $|S|$ for its cardinality.
If the indices in $S$ represent node indices of a graph, we use
$\vol(S)\defi\sum_{i\in S} d_i$ for the graph volume, where $d_i$ is the number of neighbors of node $i$, that is, its degree. We assume $d_i>0$ for all vertices.

We say a differentiable function $f$ is $L$-smooth if $\nabla f$ is $L$-Lipschitz with respect to $\norm{\cdot}_2$,
that is $\norm{\nabla f(x) - \nabla f(y)}_2 \leq L\norm{x - y}_2$.
We denote by $\mu > 0$ the strong-convexity parameter of a strongly-convex function $F$ with respect to $\norm{\cdot}_2$.
In such a case $F$ has a unique minimizer $x^\star$. 
For one such problem, define the optimal support and its complement as
$S^\star \defi \supp(x^\star)$ and $I^\star \defi [n]\setminus S^\star$.

The main objective that we consider in this work is the personalized PageRank quadratic objective, and its $\ell_1$ regularized version.
For a parameter $\alpha > 0$, called the teleportation parameter, and an initial distribution of nodes $s$
(i.e., $\innp{\ones, s} =1$, $s \geq 0$), the unregularized PageRank objective is
\[
    f(x) \defi \frac{1}{2} \innp{ x , Q x } - \alpha \innp{ D^{-1/2}s,  x}
    \ \text{ for } \ Q = \alpha I + \frac{1-\alpha}{2}\lapl,
\] 
where $\lapl \defi I - D^{-1/2}AD^{-1/2}$ is the symmetric normalized Laplacian matrix, which is known to satisfy
$0 \preccurlyeq \lapl \preccurlyeq  2 I$ \citep{butler2014spectral}. Thus, $\alpha I \preccurlyeq Q \preccurlyeq  I$,
which implies the objective is $\alpha$-strongly convex and $1$-smooth.
We will assume the seed is a single node $v$, that is $s=e_v$.
This is the case for clustering applications, where one seeks to find a cluster of nodes near $v$
that have high intraconnectivity and low connectivity to the rest of the graph
\citep{andersen2006local,gleich2015pagerank}.

A common objective for obtaining sparse PageRank solutions is the $\ell_1$-Regularized Personalized PageRank problem (RPPR),
which comes with the sparsity guarantee $\vol(S^\star) \leq 1 /{\rho}$, cf.\ \citep[Theorem 2]{fountoulakis2019variational},
where $\rho > 0$ is a regularization weight on the objective:
\begin{equation}\label{eq:reg_personalized_pagerank}
    \tag{RPPR}
    \min_{x \in \R^n} F_{\rho}(x),
    \qquad\text{where}\qquad
    F_{\rho}(x) \defi f(x) + g(x).
\end{equation}
where $g(x) \defi \alpha\rho\norm{D^{1 / 2}x}_1$. This is the central problem we study in this work.

It is worth noticing some properties of \cref{eq:reg_personalized_pagerank}.
The initial gap from $x_0=0$ is $\Delta_0 \defi F(0) - F(x^\star) \leq \alpha/2$, cf.\ \cref{lemma:initial_gap},
and so by strong convexity, the initial distance to $x^\star$ satisfies
$\norm{x^\star}_2 \leq \sqrt{2\Delta_0 / \mu} \leq 1$.
Finally, the minimizer $x^\star(\rho)$ of $F_\rho$ is coordinatewise nonnegative and the optimality conditions are, cf.\ \citep{fountoulakis2019variational}:
\begin{equation}\label{eq:KKT_coord_rho}
\nabla_i f\bigl(x^\star(\rho)\bigr)
\in
\begin{cases}
    \set{-\alpha\rho \sqrt{d_i}}, & x_i^\star(\rho)>0,\\
[-\alpha\rho \sqrt{d_i},0], & x_i^\star(\rho)=0.
\end{cases}
\end{equation}

\subsection{The FISTA Algorithm}

We introduce here the classical accelerated proximal-gradient method (FISTA) \citep{beck2009fast}
and the properties we use later.
We present the method for a composite objective $F(x)\defi f(x)+g(x)$ where $f$ is $L$-smooth and $F$
is $\mu$-strongly convex with respect to $\norm{\cdot}_2$.
For \cref{eq:reg_personalized_pagerank}, we have $L=1$ and $\mu=\alpha$ (since $\alpha I\preccurlyeq Q\preccurlyeq I$),
so the standard choice is step size $\eta=1/L=1$ and momentum parameter
$\beta = \frac{\sqrt{L/\mu} - 1}{\sqrt{L / \mu}+1}=\frac{1-\sqrt{\alpha}}{1+\sqrt{\alpha}}$. The iterates of the FISTA algorithm initialized with $x_{-1}=x_0=0$ are, for $k \geq 0$:
\begin{equation}\label{eq:apg_fista}\tag{FISTA}
y_k = x_k + \beta (x_k - x_{k-1}), \qquad
x_{k+1} = \prox_{\eta g}\!\bigl(y_k - \eta \nabla f(y_k)\bigr).
\end{equation}
The proximal operator is defined as
$\prox_{\eta g}(x) \defi \argmin_{y}\{\eta g(y) + \tfrac{1}{2}\|y-x\|_2^2\}$.
For the RPPR regularizer $g(x)=\alpha\rho\|D^{1/2}x\|_1$ the prox is separable and yields:
\begin{equation}\label{eq:prox_in_fista}
x_{k+1,i}
= \operatorname{sign}(y_{k,i}-\eta\nabla_i f(y_k))\,
\max\Bigl\{\,
\bigl|y_{k,i}-\eta\nabla_i f(y_k)\bigr|
-\eta\alpha\rho\sqrt{d_i},\,0
\Bigr\}.
\end{equation}

\begin{definition}
We measure runtime via a degree-weighted work model.
For an iterate pair $(y_k,x_{k+1})$ we define the per-iteration work as
\begin{equation}\label{eq:deg_work_model}
\mathrm{work}_k
\;\defi\;
\vol(\supp(y_k)) + \vol(\supp(x_{k+1})).
\end{equation}
For ISTA, $y_k=x_k$; for FISTA, $y_k=x_k+\beta(x_k-x_{k-1})$.
The total work to reach the stopping target is the sum of $\mathrm{work}_k$ over the iterations taken\footnote{Since each FISTA iteration computes a single gradient at $y_k$, one could alternatively take
$\mathrm{work}_k\defi \vol(\supp(y_k))$. Our definition \eqref{eq:deg_work_model} is a convenient
symmetric upper bound (it also covers evaluations at $x_{k+1}$, e.g., for stopping diagnostics), and
it matches the quantities controlled in our proofs up to an absolute constant.}.
\end{definition}

\section{FISTA's work analysis in RPPR}

We provide a lower bound and a conditional upper bound on the total work of \Cref{eq:apg_fista} on \Cref{eq:reg_personalized_pagerank}\footnote{All results in the paper have been formalized, subject to basic optimization results and results from previous papers. We provide details in \cref{sec:formalization}.}. First, the upper bound is proved by splitting the total work into a core cost and a spurious-exploration overhead. We run FISTA on the over-regularized objective $F_{2\rho}$, while taking as core set \(S=\supp(x^\star(\rho))\), so that \(\vol(S)\le 1/\rho\) by the RPPR sparsity guarantee (cf.\ \cref{sec:preliminaries}). The main task is then to bound the cumulative overhead from transient activations outside \(S\), using complementarity slacks, the confinement condition, and FISTA’s iteration complexity; this leads to the work bound proved in \cref{thm:double_reg_work}. We then complement this with a worst-case negative result showing that standard FISTA can be asymptotically worse than ISTA in \cref{sec:lower_bound}.

\subsection{Over-regularization}
A direct upper bound analysis of FISTA naturally runs into a margin issue.
In the arguments that follow, spurious activations will be controlled by KKT slacks at the optimum.
For RPPR, however, the smallest slack over inactive coordinates can be arbitrarily small
(see \cref{sec:bad_instances}), so any bound that depends on the minimum slack would be vacuous.
To obtain a work bound that remains meaningful, we will slightly over-regularize the objective\footnote{Our over-regularization affects clustering guarantees only by a constant factor, see \citep{andersen2006local,fountoulakis2019variational}.}, and we will relate the support of the solutions for $F_{2\rho}$  and $F_{\rho}$. For these two problems, we introduce the notation:
\[
g_A(x)\defi \alpha\rho\|D^{1/2}x\|_1
\qquad\text{and}\qquad
g_B(x)\defi 2\alpha\rho\|D^{1/2}x\|_1,
\]
and the corresponding minimizers
\[
x_A^\star \in \arg\min_x \bigl(f(x)+g_A(x)\bigr),
\qquad
x_B^\star \in \arg\min_x \bigl(f(x)+g_B(x)\bigr),
\]
with supports $S_A \defi \supp(x_A^\star),\; S_B \defi \supp(x_B^\star),\; I_B \defi [n]\setminus S_B$.
We run standard \cref{eq:apg_fista} on the over-regularized (B) problem, and we treat $S_A$ as a region where coordinates are potentially active at every iteration, even if some are inactive for  $x^\star_B$. This choice does not entail large work, since the guarantee is $\vol(S_A)\le 1/\rho$, cf. $\vol(S_B) \leq 1 / (2\rho)$.

We also have to account for the work of nodes that are active outside $S_A$. Define the spurious active set at step $k$ by \inlinesmash{
\widetilde{A}_k \defi \supp(x_{k+1})\cap S_A^c}.
Such activations are the only mechanism by which FISTA can
incur additional locality overhead beyond the cost of working inside $S_A$. Then, after $N$ iterations, the total degree-weighted work is bounded up to an absolute constant by
\begin{equation}\label{eq:running_time}
    \bigol{ N\vol(S_A) + \sum_{k=0}^{N-1} \vol(\widetilde{A}_k)}.
\end{equation}
The first term corresponds to the cost of running $N$ proximal-gradient steps while remaining in $S_A$,
since computing the gradient and applying the prox map costs work proportional to the volume of the active set.
The second term is the cumulative overhead from transient activations outside $S_A$.

Our goal is to bound \cref{eq:running_time}. The next subsection controls the second term. The complementarity-slack \Cref{lem:coord_jump} links spurious activations to deviations in the forward-gradient map, while \Cref{lem:two_tier_split} ensures a uniform slack bound outside $S_A$. Together, these remove dependence on tiny margins and allow summation of spurious volumes via FISTA's geometric contraction.

\subsection{Complementarity slack and spurious activations}

We formalize how momentum-induced activations outside the optimal support translate into a quantitative cost. For a coordinate that is zero at the optimum of the (B) problem, the KKT conditions define an interval for its gradient, and the distance to its boundary measures how safely inactive it is. If FISTA activates such a coordinate, the forward step must deviate by at least this margin, which allows us to bound the cumulative work on spurious supports.

Fix the (B) problem and let $x^\star \defi x_B^\star$.
For every inactive coordinate $i\in I_B$, define its degree-normalized complementarity (KKT) slack by
\begin{equation}\label{eq:gamma_i}
\gamma_i \defi \frac{\lambda_i-|\nabla f(x^\star)_i|}{\sqrt{d_i}}
= \frac{\lambda_i+\nabla_i f(x^\star)}{\sqrt{d_i}}
\ge 0,
\qquad \text{where } \lambda_i \defi 2\alpha\rho\sqrt{d_i}.
\end{equation}
The quantity $\gamma_i$ is the (degree-normalized) gap between the soft-threshold level $\lambda_i$ and the magnitude of the optimal gradient at coordinate $i$. In other words, it measures how far coordinate $i$ is from becoming active at the optimum. Define the gradient map and the set of spurious nodes by
\begin{equation*}\label{eq:new_u_def}
u(x)\defi x-\eta\nabla f(x),
\qquad
    A(y)\defi\supp\!\left(\prox_{\eta g_B}\bigl(u(y)\bigr)\right)\cap I_B \quad \text{ for any } x\in \R^n, y\in \R^n.
\end{equation*}
Here $u(\cdot)$ is the standard forward step used by proximal-gradient methods, and $A(y)$ is the subset of (B)-inactive indices that become nonzero after applying the prox map at the point $y$. The set $A(y_k)$ is exactly what creates the extra per-iteration cost due to spurious exploration with respect to an ideal local acceleration complexity of $\tilde{O}(1 / (\rho\sqrt{\alpha}))$.

We now formalize the connection between a spurious activation and a nontrivial deviation in the forward step. This is the basic bridge from optimality structure to the work bound.

\begin{lemma}\label{lem:coord_jump}\linktoproof{lem:coord_jump}
Fix $y\in\R^n$.
For every $i\in A(y)$, $
|u(y)_i-u(x^\star)_i| > \eta \gamma_i\sqrt{d_i}$.
\end{lemma}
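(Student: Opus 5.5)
The plan is to compare the forward step at $y$ with the forward step at the optimum $x^\star = x_B^\star$, coordinate by coordinate. The proof will use only two ingredients: the separable soft-thresholding form of the prox in \eqref{eq:prox_in_fista}, applied with threshold $\eta\lambda_i = 2\eta\alpha\rho\sqrt{d_i}$ for $g_B$, and the KKT conditions \eqref{eq:KKT_coord_rho} for the (B) problem at an inactive coordinate.

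\textbf{Step 1: the forward step at the optimum.} Fix $i\in I_B$, so that $x^\star_i=0$. Then
\[
u(x^\star)_i = x^\star_i - \eta\nabla_i f(x^\star) = -\eta\nabla_i f(x^\star).
\]
The KKT conditions with parameter $2\rho$ give $\nabla_i f(x^\star)\in[-\lambda_i,0]$, hence $u(x^\star)_i \in [0,\eta\lambda_i]$. The definition \eqref{eq:gamma_i} says $\lambda_i+\nabla_i f(x^\star)=\gamma_i\sqrt{d_i}$. Therefore
\[
\eta\lambda_i - u(x^\star)_i = \eta\bigl(\lambda_i + \nabla_i f(x^\star)\bigr) = \eta\gamma_i\sqrt{d_i}.
\]
So $u(x^\star)_i$ sits exactly $\eta\gamma_i\sqrt{d_i}$ below the upper threshold.

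\textbf{Step 2: what activation forces at $y$.} Now take $i\in A(y)$, so that $\prox_{\eta g_B}(u(y))_i\neq 0$. By \eqref{eq:prox_in_fista}, this happens exactly when $|u(y)_i| > \eta\lambda_i$. We split into two cases according to the sign of $u(y)_i$.
\begin{itemize}
\item If $u(y)_i > \eta\lambda_i$, then $u(y)_i - u(x^\star)_i > \eta\lambda_i - u(x^\star)_i = \eta\gamma_i\sqrt{d_i}$.
\item If $u(y)_i < -\eta\lambda_i$, then $u(x^\star)_i - u(y)_i > u(x^\star)_i + \eta\lambda_i \ge \eta\lambda_i$, using $u(x^\star)_i\ge 0$. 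Since $\nabla_i f(x^\star)\le 0$, we have $\gamma_i\sqrt{d_i} = \lambda_i + \nabla_i f(x^\star) \le \lambda_i$, so $\eta\lambda_i \ge \eta\gamma_i\sqrt{d_i}$.
\end{itemize}
In both cases $|u(y)_i - u(x^\star)_i| > \eta\gamma_i\sqrt{d_i}$, which is the claim.

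\textbf{Where care is needed.} There is no serious obstacle; the argument is a direct consequence of how soft-thresholding works. The only points requiring attention are the following.
\begin{itemize}
\item Use the correct threshold $\lambda_i$ for the over-regularized (B) problem.
\item Use the sign information $\nabla_i f(x^\star)\in[-\lambda_i,0]$, which is what makes the negative case go through. One could instead use the symmetric bound $|\nabla_i f(x^\star)|\le\lambda_i$, which also suffices, since $\gamma_i\sqrt{d_i} = \lambda_i - |\nabla_i f(x^\star)|$.
\item Keep the inequality strict. Strictness comes from the prox producing a nonzero output only when $|u(y)_i|>\eta\lambda_i$ strictly.
\end{itemize}
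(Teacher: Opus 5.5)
Your proof is correct and rests on the same idea as the paper's. Activation forces $|u(y)_i|>\eta\lambda_i$, while $|u(x^\star)_i|=\eta(\lambda_i-\gamma_i\sqrt{d_i})$ sits exactly $\eta\gamma_i\sqrt{d_i}$ below that threshold. The paper compresses your two sign cases into one line via the reverse triangle inequality $|u(y)_i-u(x^\star)_i|\ge |u(y)_i|-|u(x^\star)_i|$. This is the symmetric variant you mention at the end, and it does not need the sign information $\nabla_i f(x^\star)\le 0$ beyond what is built into the definition of $\gamma_i$.
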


\Cref{lem:coord_jump} is what allows us to turn spurious activations into a summable error budget that is compatible with FISTA's convergence rate, given that the distance to optimizer bounds $\norm{u(y_k)-u(x^\star)}$ and contracts with time.
Recall that the margin for the (B) problem can be written as
\begin{equation}\label{def:margin}
\gamma^{(B)}_i
\defi
2\rho\alpha + \frac{\nabla_i f(x_B^\star)}{\sqrt{d_i}}
\qquad \text{ for } i\in I_B.
\end{equation}
We now show that the margin of coordinates that are not in $S_A$ is large enough.

\begin{lemma}\label{lem:two_tier_split}\linktoproof{lem:two_tier_split}
Let $I_B\defi [n]\setminus S_B$ be the inactive set for problem (B), and define
\[
I_B^{\rm small}
\defi 
\Bigl\{ i\in I_B : \gamma^{(B)}_i < \rho\alpha \Bigr\},
\qquad
I_B^{\rm large}
\defi 
\Bigl\{ i\in I_B : \gamma^{(B)}_i \ge \rho\alpha \Bigr\}.
\]
Then $I_B^{\rm small}\subseteq S_A$.
\end{lemma}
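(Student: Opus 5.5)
The plan is a proof by contradiction that combines the coordinatewise KKT conditions \eqref{eq:KKT_coord_rho} for problem (A) with regularization-path monotonicity \citep{ha2021statistical} and the sign structure of $Q$.

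First, I will unpack the hypothesis. For $i\in I_B^{\rm small}$, \eqref{def:margin} gives $2\rho\alpha + \nabla_i f(x_B^\star)/\sqrt{d_i} < \rho\alpha$, that is,
\[
\nabla_i f(x_B^\star) < -\alpha\rho\sqrt{d_i}.
\]
This means the (B)-gradient at $i$ lies strictly beyond the threshold allowed for an inactive coordinate of problem (A).

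Suppose, for contradiction, that $i\notin S_A$, so $x^\star_{A,i}=0$. I will invoke the monotonicity of the $\ell_1$-regularized PageRank solution path \citep{ha2021statistical}. Since $\rho\mapsto x^\star(\rho)$ is coordinatewise nonincreasing, and both minimizers are nonnegative, we get $x_A^\star\ge x_B^\star\ge 0$ coordinatewise.

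Next, I will use the structure $Q=\alpha I+\frac{1-\alpha}{2}(I-D^{-1/2}AD^{-1/2})$, whose off-diagonal entries satisfy $Q_{ij}=-\frac{1-\alpha}{2}A_{ij}/\sqrt{d_id_j}\le 0$. Since $x^\star_{A,i}=x^\star_{B,i}=0$, the diagonal term drops out of both gradients:
\[
\nabla_i f(x_A^\star)=\sum_{j\neq i}Q_{ij}x^\star_{A,j}-\alpha\frac{s_i}{\sqrt{d_i}}
\;\le\;\sum_{j\neq i}Q_{ij}x^\star_{B,j}-\alpha\frac{s_i}{\sqrt{d_i}}=\nabla_i f(x_B^\star).
\]
The inequality holds because each $Q_{ij}\le 0$ and $x^\star_{A,j}\ge x^\star_{B,j}$. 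Combining this with the hypothesis gives $\nabla_i f(x_A^\star)<-\alpha\rho\sqrt{d_i}$.

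This contradicts the KKT condition \eqref{eq:KKT_coord_rho} for problem (A) at an inactive coordinate, which requires $\nabla_i f(x_A^\star)\in[-\alpha\rho\sqrt{d_i},0]$. Hence $x^\star_{A,i}>0$, i.e.\ $i\in S_A$, which proves $I_B^{\rm small}\subseteq S_A$.

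The main obstacle is justifying the path-monotonicity step $x_A^\star\ge x_B^\star$. I plan to cite \citet{ha2021statistical} directly. If a self-contained argument is needed, I would compare the nondecreasing proximal-gradient iterates for the two problems started at $0$. By induction, using that the forward map $x\mapsto x-\nabla f(x)=(I-Q)x+\alpha D^{-1/2}s$ is monotone (since $I-Q$ is entrywise nonnegative) and that the (A)-threshold is smaller, the (A)-iterates dominate the (B)-iterates at every step. Passing to the limit then gives the ordering of the minimizers.
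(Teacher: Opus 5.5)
Your proof is correct and uses the same two ingredients as the paper: path monotonicity $x_A^\star\ge x_B^\star$, and the nonpositive off-diagonal entries of $Q$, which make the forward map $u$ monotone and transfer that ordering to the $i$-th coordinate. The paper argues directly, showing $\operatorname{prox}_{g_A}(u(x_B^\star))_i>0$ and then invoking monotonicity of the prox-gradient map and the fixed-point identity $x_A^\star=\operatorname{prox}_{g_A}(u(x_A^\star))$, whereas you argue by contradiction through the inactive KKT interval for (A); this difference is only cosmetic, and your gradient comparison is exactly the one the paper uses in the proof of \cref{lem:fista_degree_cutoff}.
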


Next, we compute a bound on the work $\vol(\widetilde{A}_k)$ which is proportional to the inverse minimum margin of the coordinates involved, and this quantity is no more than ${1}/{(\rho \alpha)}$ by the lemma above.

\subsection{Work bound and sufficient conditions}

We now derive a conditional upper bound on the work.
Recall the decomposition \eqref{eq:running_time}, the uniform bound for the margin of coordinates in \inlinesmash{\widetilde{A}_k} in the previous section, together with Cauchy-Schwarz and the distance contraction of $\|y_k-x_B^\star\|_2$ that we show in \Cref{lem:yk_distance}, makes the series \inlinesmash{\sum_k \vol(\widetilde A_k)} summable and leads to the overhead term in the theorem below.

\begin{theorem}\label{thm:double_reg_work}\linktoproof{thm:double_reg_work}
For the (B) problem with objective $F_B(x)=f(x)+g_B(x)$, run \cref{eq:apg_fista}.
Let $\mathcal{B}$ be a set such that \inlinesmash{\widetilde{A}_k \subseteq \mathcal{B}} for all $k\ge 0$, Then, we reach $F_B(x_N)-F_B(x_B^\star)\le \varepsilon$ after a total degree-weighted work of at most
\[
\mathrm{Work}(N_\varepsilon)
\;\le\;
O\left(
    \frac{1}{\rho\sqrt{\alpha}}\log\left(\frac{\alpha}{\varepsilon}\right)
\;+\;
    \frac{\sqrt{\vol(\mathcal{B})}}{\rho\alpha^{3 / 2}}
\right).
\]
\end{theorem}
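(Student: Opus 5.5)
We bound the two terms of the decomposition \eqref{eq:running_time} separately, with $N=N_\varepsilon$ the FISTA iteration count on the (B) problem.

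\textbf{Core term.} By the standard strongly convex FISTA rate with $L=1$ and $\mu=\alpha$, we have $F_B(x_k)-F_B(x_B^\star)\le 2\Delta_0(1-\sqrt{\alpha})^k$, and $\Delta_0\le\alpha/2$ by \cref{lemma:initial_gap}. Hence $N_\varepsilon=O(\alpha^{-1/2}\log(\alpha/\varepsilon))$ iterations suffice. Each iteration restricted to $S_A$ costs $O(\vol(S_A))\le O(1/\rho)$, which gives the first term. Here we also check that $\supp(y_k)\subseteq \supp(x_k)\cup\supp(x_{k-1})$. The part of these supports outside $S_A$ equals $\widetilde A_{k-1}\cup\widetilde A_{k-2}$, which is absorbed into the second sum up to a factor $2$.

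\textbf{Spurious term.} Fix $k$ and $i\in\widetilde A_k$. Then $i\notin S_A\supseteq S_B$, so $i\in I_B$ and in fact $i\in A(y_k)$. By \Cref{lem:two_tier_split}, $i\notin I_B^{\rm small}$, so $\gamma_i\ge\rho\alpha$. \Cref{lem:coord_jump} with $\eta=1$ then gives $|u(y_k)_i-u(x_B^\star)_i|>\rho\alpha\sqrt{d_i}$. Squaring and summing over $i\in\widetilde A_k$ yields
\[
\rho^2\alpha^2\vol(\widetilde A_k)\le \|u(y_k)-u(x_B^\star)\|_2^2\le \|y_k-x_B^\star\|_2^2,
\]
where the last step uses that $I-\nabla^2 f=I-Q$ has spectrum in $[0,1-\alpha]$, so $u$ is nonexpansive. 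This alone gives $\vol(\widetilde A_k)\le\|y_k-x^\star_B\|^2/(\rho\alpha)^2$. To get a bound of the form $\sqrt{\vol(\mathcal B)}$, we instead use Cauchy--Schwarz on the $\ell_1$ version. Summing $\rho\alpha\sqrt{d_i}<|u(y_k)_i-u(x_B^\star)_i|$ over $i\in\widetilde A_k$ gives
\[
\rho\alpha\sum_{i\in\widetilde A_k}\sqrt{d_i}\le\sqrt{|\widetilde A_k|}\,\|y_k-x_B^\star\|_2 .
\]
Then $\vol(\widetilde A_k)=\sum\sqrt{d_i}\sqrt{d_i}\le \sqrt{\vol(\mathcal B)}\,\bigl(\sum_{i\in\widetilde A_k} d_i\bigr)^{1/2}$ would be circular. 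The cleaner route is therefore the interpolation
\[
\vol(\widetilde A_k)\le \min\Bigl\{\vol(\mathcal B),\ \frac{\|y_k-x_B^\star\|_2^2}{\rho^2\alpha^2}\Bigr\}\le \frac{\sqrt{\vol(\mathcal B)}\,\|y_k-x_B^\star\|_2}{\rho\alpha},
\]
using $\min\{a,b\}\le\sqrt{ab}$. This is exactly where the confinement $\widetilde A_k\subseteq\mathcal B$ enters.

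\textbf{Summation.} It remains to sum $\|y_k-x_B^\star\|_2$ over $k$. By \Cref{lem:yk_distance}, this distance contracts geometrically: $\|y_k-x_B^\star\|_2\le C\sqrt{\Delta_0/\alpha}\,(1-\sqrt{\alpha})^{k/2}$ up to constants, using $\|x_k-x_B^\star\|^2\le 2(F_B(x_k)-F_B^\star)/\alpha$ and $\beta\le1$. Since $\Delta_0/\alpha\le 1/2$, the geometric series sums to
\[
\sum_k\|y_k-x_B^\star\|_2\le O\bigl(1/(1-\sqrt{1-\sqrt\alpha})\bigr)=O(\alpha^{-1/2}),
\]
independent of $N$. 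Multiplying by $\sqrt{\vol(\mathcal B)}/(\rho\alpha)$ gives the second term $\sqrt{\vol(\mathcal B)}/(\rho\alpha^{3/2})$.

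\textbf{Main obstacle.} The delicate point is getting the correct power of the distance: the $\min\{a,b\}\le\sqrt{ab}$ trick must be paired with a summable bound on $\|y_k-x_B^\star\|$ (not its square), with constants uniform in $\alpha$. I would first verify that \Cref{lem:yk_distance} gives the $(1-\sqrt\alpha)^{k/2}$ rate with an $O(1)$ prefactor after using $\Delta_0\le\alpha/2$.
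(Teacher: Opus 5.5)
Your proof is correct and follows essentially the same route as the paper: the core/spurious split of \eqref{eq:running_time}, the uniform margin $\gamma_i^{(B)}\ge\rho\alpha$ on $\widetilde A_k$ from \Cref{lem:two_tier_split}, the per-coordinate jump from \Cref{lem:coord_jump}, and a geometric sum of $\|y_k-x_B^\star\|_2$ via \Cref{lem:yk_distance}. Your $\min\{a,b\}\le\sqrt{ab}$ step is the paper's Cauchy--Schwarz step with $\gamma_i^{(B)}$ replaced by its lower bound $\rho\alpha$, so the route you set aside as circular actually closes in the same way, because $(\sum_{i\in\widetilde A_k}d_i)^{1/2}\le\|y_k-x_B^\star\|_2/(\rho\alpha)$; your use of $\|I-Q\|_2\le 1-\alpha$ only improves the paper's constant $1+\eta L=2$.
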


The bound in \Cref{thm:double_reg_work} separates the cost of converging on $S_A$, from the extra cost of transient exploration.
The first term is the baseline accelerated contribution: FISTA needs $N_\varepsilon=O\!\left((\sqrt{\alpha})^{-1}\log(\alpha/\varepsilon)\right)$ iterations, and each iteration costs at most a constant times $\vol(S_A)\le 1/\rho$, yielding $\bigo{(\rho\sqrt{\alpha})^{-1}\log(\alpha/\varepsilon)}$.
The second term bounds the entire cumulative volume of the spurious sets \inlinesmash{\widetilde A_k}.
The factor $\rho^{-1}\alpha^{-3/2}$ reflects the combination of (i) the uniform margin $\rho\alpha$ obtained from \Cref{lem:two_tier_split} and (ii) the geometric contraction of the iterates, which sums to $O(\alpha^{-1/2})$.

We used hypothesis $\widetilde{A}_k\subseteq\mathcal{B}$ as an explicit locality requirement. We now give a graph-structural sufficient condition that implies such confinement, with $\mathcal{B}$ identified as a vertex boundary.

\begin{theorem}\label{thm:boundary_confinement_exposure}\linktoproof{thm:boundary_confinement_exposure}
    Consider the (B) problem objective $F_{2\rho}$ in \cref{eq:reg_personalized_pagerank}, with $\alpha\in(0,1)$, and let $S$ be a set such that $S_B \subseteq S$.
Define $\bdry S$ as the vertex boundary of $S$ and $\mathrm{Ext}(S) \defi V\setminus(S\cup \bdry S)$.
Assume that for all $i\in \mathrm{Ext}(S)$,
\begin{equation}\label{eq:exposure_assumption}
    \frac{|\N(\{i\})\cap \bdry S|}{d_i}\le \left(\frac{\alpha\rho}{2(1-\alpha)}\right)^2 d_i d_{\min \bdry S},
\end{equation}
where $d_{\min \bdry S} \defi \min_{j\in \bdry S} d_j$.
Then the iterates of \cref{eq:apg_fista} satisfy $\supp(x_k)\subseteq S\cup \bdry S$ for all $k\ge 0$:
\[
\supp(x_k)\subseteq S\cup \bdry S.
\]
\end{theorem}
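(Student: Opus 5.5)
The plan is induction on $k$: if $x_k$ and $x_{k-1}$ are supported in $S\cup\bdry S$, then so is $x_{k+1}$. The base case is immediate because $x_{-1}=x_0=0$. For the inductive step, $y_k=x_k+\beta(x_k-x_{k-1})$ is then supported in $S\cup\bdry S$. It therefore suffices to show that for every $i\in\mathrm{Ext}(S)$ the forward step satisfies $|u(y_k)_i|\le \eta\lambda_i = 2\alpha\rho\sqrt{d_i}$ (with $\eta=1$). By the prox formula \eqref{eq:prox_in_fista}, this forces $x_{k+1,i}=0$.

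\textbf{Local computation at an exterior node.} Fix $i\in\mathrm{Ext}(S)$. We have $y_{k,i}=0$. We also have $s_i=0$, since the seed $v$ lies in $S_B\subseteq S$ whenever $x_B^\star\neq 0$; the case $x_B^\star=0$ can be handled separately or excluded as trivial. Since $Q=\alpha I+\tfrac{1-\alpha}{2}(I-D^{-1/2}AD^{-1/2})$, we get
\[
u(y_k)_i=-\nabla_i f(y_k)=\frac{1-\alpha}{2}\sum_{j\in\N(\{i\})}\frac{y_{k,j}}{\sqrt{d_id_j}}.
\]
A node of $\mathrm{Ext}(S)$ has no neighbours in $S$ (otherwise it would belong to $\bdry S$). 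Moreover, $y_k$ vanishes on $\mathrm{Ext}(S)$. Hence only the neighbours $j\in\N(\{i\})\cap\bdry S$ contribute. Using $d_j\ge d_{\min\bdry S}$ and Cauchy--Schwarz gives
\[
|u(y_k)_i|\le \frac{1-\alpha}{2\sqrt{d_i d_{\min\bdry S}}}\sqrt{|\N(\{i\})\cap\bdry S|}\,\|y_k\|_2 .
\]
Plugging in \eqref{eq:exposure_assumption}, i.e.\ $\sqrt{|\N(\{i\})\cap\bdry S|}\le \frac{\alpha\rho}{2(1-\alpha)}\,d_i\sqrt{d_{\min\bdry S}}$, yields
\[
|u(y_k)_i|\le \frac{\alpha\rho}{4}\sqrt{d_i}\,\|y_k\|_2 .
\]
This is at most $2\alpha\rho\sqrt{d_i}$ as soon as $\|y_k\|_2\le 8$.

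\textbf{Uniform bound on $\|y_k\|_2$ (the main obstacle).} It remains to show that FISTA's iterates stay in a ball of absolute radius, independent of $k$ and of the graph. I would use the standard strongly convex FISTA guarantee
\[
F_B(x_k)-F_B(x_B^\star)\le(1-\sqrt\alpha)^k\Bigl(F_B(0)-F_B(x_B^\star)+\tfrac{\alpha}{2}\|x_B^\star\|_2^2\Bigr),
\]
together with \cref{lemma:initial_gap} and $\|x_B^\star\|_2\le1$. These give a right-hand side of at most $\alpha$. Strong convexity then gives $\|x_k-x_B^\star\|_2\le\sqrt2$, hence $\|x_k\|_2\le 1+\sqrt2$ for all $k\ge 0$. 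Since $\beta<1$,
\[
\|y_k\|_2\le(1+\beta)\|x_k\|_2+\beta\|x_{k-1}\|_2\le 3(1+\sqrt2)<8,
\]
which closes the induction.

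The delicate points are that the constants line up, and that the cited FISTA bound applies from $k=0$ with $x_{-1}=x_0$. If the constant were slightly off, I would instead use the sharper estimate $\|y_k-x_B^\star\|_2$ from \Cref{lem:yk_distance}.
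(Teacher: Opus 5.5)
Your proposal is correct and follows the paper's proof essentially step for step: the same induction on $\supp(x_{k-1}),\supp(x_k)$, the same local formula $u(y_k)_i=\frac{1-\alpha}{2}\sum_{j\in\N(\{i\})\cap\bdry S} y_{k,j}/\sqrt{d_id_j}$, then Cauchy--Schwarz and the exposure condition to reach $|u(y_k)_i|\le\frac{\alpha\rho}{4}\sqrt{d_i}\,\|y_k\|_2$, and finally a uniform bound on $\|y_k\|_2$. The only difference is cosmetic: the paper uses \Cref{lem:yk_distance} to get $\|y_k\|_2\le\sqrt{20}+1$, whereas you bound $\|x_k\|_2\le 1+\sqrt2$ and pass to $y_k$ through the momentum step, which is the argument that proves that corollary; your remark that the case $x_B^\star=0$ is trivial ($0$ is then a fixed point, so all iterates vanish) is a small piece of care the paper skips when it asserts that the seed lies in $S$.
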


\cref{thm:boundary_confinement_exposure} gives a mechanism for ruling out spurious activations outside a candidate region. Condition~\eqref{eq:exposure_assumption} upper-bounds the fraction of an exterior node's incident edges that connect to the boundary $\partial S$, preventing extrapolated FISTA iterates from percolating into the exterior.

\begin{remark}
If $S=S_A$ in \cref{thm:boundary_confinement_exposure},  then $\supp(x_k)\subseteq S_A\cup \bdry S_A$ for all $k\ge 0$. So any spurious activation happens in $\mathcal{B}\defi \bdry S_A$, that is, the confinement hypothesis in \Cref{thm:double_reg_work} holds with $\mathcal{B}= \bdry S_A$, and the overhead term in the work bound is governed by the boundary volume $\vol(\bdry S_A)$.
\end{remark}

\begin{remark}
In \cref{sec:fista_degree_nonactivation}, we prove a complementary property showing that nodes of high-enough degree do not ever get activated, which implies that the $\vol(\mathcal{B})$ term in \cref{thm:double_reg_work} can be reduced to the volume of nodes in $\mathcal{B}$ with degree below the threshold given there.
\end{remark}

\subsection{FISTA can be worse than ISTA}
\label{subsec:lower_bound}

The lower bound comes from a seed-at-leaf star instance: the optimum is supported only on the seed leaf, so ISTA stays local, but FISTA activates the high-degree center after two extrapolated steps. Since the center has degree \(m\), this creates \(\Omega(m)\) degree-weighted work before the method can reach the target accuracy. The full construction and proof are deferred to \Cref{sec:lower_bound,prop:lower_bound}.

\begin{proposition}[Informal]
Fix \(\alpha\in(0,1)\). There exists a seed-at-leaf star instance whose center has degree \(m\), and a threshold \(\varepsilon_0(\alpha)>0\), such that standard FISTA needs at least \(2m\) total work to reach any accuracy \(0<\varepsilon\le \varepsilon_0(\alpha)\). In contrast, ISTA needs \(O\!\left(\frac{1}{\alpha}\log\frac{1}{\varepsilon}\right)\) work, independent of \(m\).
\end{proposition}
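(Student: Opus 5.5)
We construct an explicit star instance and track the first few iterates of both methods in closed form. Take the star with center $c$ of degree $m$, seed leaf $v$ (so $d_v=1$), and $m-1$ other leaves of degree one. We have $A_{vc}=1$, and $Q_{vv}=\alpha+\tfrac{1-\alpha}{2}$, $Q_{vc}=-\tfrac{1-\alpha}{2}\cdot\tfrac{1}{\sqrt{m}}$, $Q_{cc}=\alpha+\tfrac{1-\alpha}{2}$. We choose $\rho$ so that the optimum is $x^\star=x^\star_v e_v$.

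\emph{Step 1: the optimum is supported on the seed.} On the support $\{v\}$ the KKT condition \eqref{eq:KKT_coord_rho} gives $Q_{vv}x^\star_v=\alpha-\alpha\rho$, so $x_v^\star=\alpha(1-\rho)/Q_{vv}$. We then need $|\nabla_c f(x^\star)|=\tfrac{1-\alpha}{2\sqrt m}x^\star_v\le\alpha\rho\sqrt m$, which holds once $\rho\gtrsim(1-\alpha)/m$. The other leaves have zero gradient, so they are inactive.

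\emph{Step 2: ISTA stays on the seed.} Starting from $0$, the first gradient is supported on $v$. By induction, if $x_k=t e_v$ with $0\le t\le x^\star_v$ (monotonicity is the footnote corollary of \cite{fountoulakis2019variational}), then $u(x_k)_c=\tfrac{1-\alpha}{2\sqrt m}t\le\alpha\rho\sqrt m$, so the center is thresholded to zero. The scalar recursion on $v$ contracts at rate $1-Q_{vv}\le 1-\alpha$, hence ISTA needs $O(\alpha^{-1}\log(1/\varepsilon))$ iterations at $O(1)$ work each.

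\emph{Step 3: FISTA activates the center.} We compute $x_1=(\alpha-\alpha\rho)e_v$. Then $y_1=(1+\beta)x_1$, and
\[
x_{2,v}=\max\{(1-Q_{vv})y_{1,v}+\alpha-\alpha\rho,\,0\}.
\]
Next, $y_2=x_2+\beta(x_2-x_1)$ has $v$-coordinate overshooting beyond $x^\star_v$ for suitable $\alpha$; if not, we examine the first overshoot step $k_0=O(1)$. At the center,
\[
u(y_k)_c=\tfrac{1-\alpha}{2\sqrt m}\,y_{k,v},
\]
and activation needs $y_{k,v}>2\alpha\rho m/(1-\alpha)$. We therefore tune $\rho=\theta(1-\alpha)/(2\alpha m)\cdot c_\alpha$ so that $x^\star_v<2\alpha\rho m/(1-\alpha)<y_{2,v}$. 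This is possible exactly because momentum overshoots the fixed point, i.e. $y_{2,v}>x^\star_v$ strictly by a margin depending only on $\alpha$. This choice of $\rho\sim 1/m$ keeps the Step 1 condition satisfied.

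\emph{Step 4: conclude.} The center is then in $\supp(x_3)$, so $\mathrm{work}_2\ge\vol(\{c\})=m$. Moreover $y_3$ carries the center too, adding another $m$, which gives at least $2m$. We set $\varepsilon_0(\alpha)$ below the gap $F(x_k)-F(x^\star)$ for the first few iterates $k\le 3$, which are explicit and $m$-independent up to $O(1/m)$ terms. Any target $\varepsilon\le\varepsilon_0$ then forces FISTA past the activating iteration.

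The main obstacle is Step 3. We must verify the explicit overshoot inequality $y_{2,v}>x^\star_v$ with an $m$-independent margin, uniformly in the $\rho$ we then pick; $\rho$ and $\beta$ interact, so the threshold must be placed in a window whose width is bounded below. If the second step does not overshoot for all $\alpha$, we plan to compute further iterates of the scalar second-order recursion $x_{k+1}=(1-Q_{vv})(x_k+\beta(x_k-x_{k-1}))+\alpha(1-\rho)$. Its underdamped characteristic roots guarantee an overshoot at some $k_0(\alpha)$, and we would place the threshold there.
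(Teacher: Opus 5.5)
Your construction and the overall argument match the paper's. You place the seed at a leaf of a star and take the optimum to be $x_v^\star e_v$. Activation of the center reduces to $y_{k,v}$ exceeding $2\alpha\rho m/(1-\alpha)$. Then $\mathrm{work}_2\ge m$ through $x_3$, and $\mathrm{work}_3\ge m$ through $y_{3,c}=(1+\beta)x_{3,c}>0$. Finally, an $m$-uniform lower bound on the gaps of $x_0,\dots,x_3$ forces $N\ge 4$.

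\textbf{The gap.} What is missing is exactly the step you flag as the main obstacle, and it is the core of the lower bound. You only assert the overshoot ``for suitable $\alpha$'' and fall back on an unverified underdamping claim. The step closes by direct computation. Set $a=\frac{1-\alpha}{2}$, $e_k=x_{k,v}-x_v^\star$ and $\tilde e_k=y_{k,v}-x_v^\star$. While the center is inactive, $e_{k+1}=a\tilde e_k$, with $e_0=-x_v^\star$ and $e_1=ae_0$. The identities $(1+\beta)a=1-\sqrt\alpha$, $(1+\beta)a-\beta=\sqrt\alpha\,\beta$ and $(1+\beta)\sqrt\alpha-1=-\beta$ then give $\tilde e_1=\sqrt\alpha\,\beta\,e_0<0$, so there is no activation at $k=1$. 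They also give
\[
\tilde e_2=-a\beta^2 e_0=\frac{(1-\alpha)\beta^2}{2}\,x_v^\star>0
\]
for every $\alpha\in(0,1)$. So the center activates at $k=2$ and no later $k_0$ is ever needed. Your underdamping claim is true anyway: the characteristic roots are $\frac{1-\sqrt\alpha}{2}(1\pm i)$.

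\textbf{Further remarks.}
\begin{itemize}
\item \emph{Choice of $\rho$.} The scalar dynamics are linear in the scale $\alpha(1-\rho)$, so the relative overshoot $\tilde e_2/x_v^\star=a\beta^2$ is independent of $\rho$ and $m$. There is therefore no $\rho$--$\beta$ interaction to control. The paper simply takes $\rho$ at the breakpoint $\rho_0=\frac{1-\alpha}{m(1+\alpha)+(1-\alpha)}$, where the activation threshold equals $x_v^\star$, so any positive overshoot activates the center. Your window works too: any $\rho\ge\rho_0$ whose threshold lies below $(1+a\beta^2)x_v^\star$.
\item \emph{The gap at $k=3$.} Once the center is active, $x_3$ has left the ray, so its gap is not given by the one-dimensional formula. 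Bound it by strong convexity, $F(x_3)-F(x^\star)>\frac{\alpha}{2}e_3^2$ with $e_3=a\tilde e_2=a^2\beta^2 x_v^\star$. Use $x_v^\star\ge 4\alpha/(3+\alpha)$ for $m\ge 2$ to make the bound uniform in $m$.
\item \emph{ISTA.} Your direct scalar-recursion argument, with contraction factor $(1-\alpha)/2$ and the center kept below threshold because $x_{k,v}\le x_v^\star$, is a valid self-contained replacement for the paper's citation of the ISTA support-containment theorem.
\end{itemize}
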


\section{Experiments}\label{sec:experiments}

This section evaluates when FISTA reduces (and when it can increase) the total work for $\ell_1$-regularized PageRank, reflecting the tradeoff in \Cref{thm:double_reg_work}.
We present two sets of experiments.
First, we consider a controlled synthetic core-boundary-exterior graph family. Details on parameter tuning are given in \cref{app:exp_setting_details}\footnote{In our experiments, we did not over-regularize the problem.}.
Second, we compare ISTA and FISTA on real data: SNAP \citep{leskovec2014snap} graphs. 
\begin{figure}[t!]
    \centering
    \includegraphics[width=0.9\linewidth]{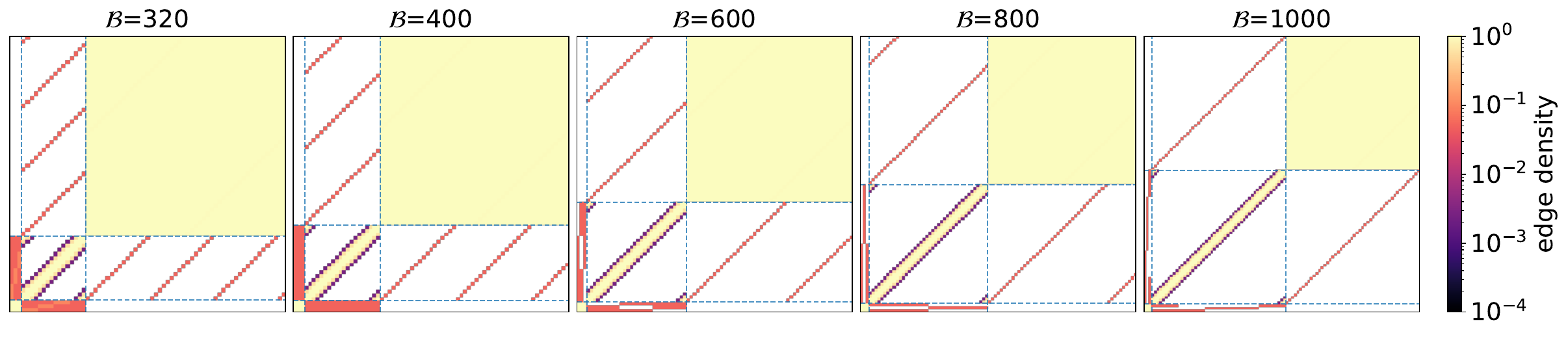}
    \caption{\textit{Adjacency density.}
    For each boundary size $|\mathcal{B}|$ we visualize the adjacency matrix via a
    binned edge-density heatmap (bin size $20$), where each pixel shows the fraction of possible
    edges between a pair of bins (log-scaled; colormap magma with white below $10^{-4}$).
    Dashed lines mark the core | boundary | exterior block boundaries.
    The plots show the clique (upper-left block), the boundary circulant band,
    the nearly dense exterior block, and the sparse cross-region interfaces.
    }
    \label{fig:adjacency_binned_density}
\end{figure}

For synthetic experiments the no-percolation assumption is satisfied. We use a three-block node partition
$V = S \cup \mathcal{B} \cup \mathrm{Ext}$, where $S$ (the core) contains the seed.
The induced subgraph on $S$ is a clique, while $\mathcal{B}$ (the boundary) and
$\mathrm{Ext}$ (the exterior) are each internally connected. Cross-region connectivity is sparse:
the core connects to $\mathcal{B}$ with a fixed per-core fan-out, and each exterior node has
one neighbor in $\mathcal{B}$. This yields a block structure in the adjacency matrix and lets us
vary the boundary size/volume $\vol(\mathcal{B})$ while keeping the core fixed, see \Cref{fig:adjacency_binned_density}. We provide details in \cref{app:exp_setting_details}
\ifarxiv
\footnote{Code that reproduces all experiments is available at \url{https://github.com/watcl-lab/accelerated_l1_PageRank_experiments}.}. 
\else
\footnote{Code that reproduces all experiments is available in the supplementary material.}
\fi

\subsection{Synthetic boundary-volume sweep experiment}
\label{subsec:synthetic_volB_sweep}

\begin{wrapfigure}{r}{0.35\linewidth}
    \centering
    \includegraphics[width=\linewidth]{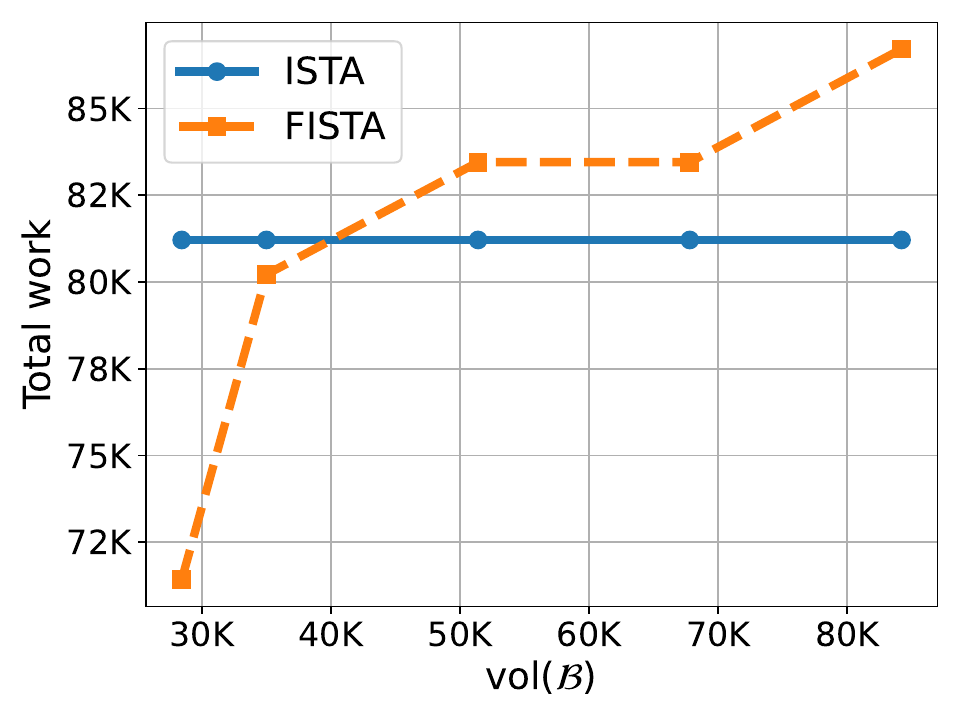}
    \caption{\textit{Work vs.\ $\vol(\mathcal{B})$.}
    Work by ISTA and FISTA against $\vol(\mathcal{B})$.}
    \label{fig:work_vs_volumeB}
\end{wrapfigure}

This section provides synthetic experiments illustrating how the boundary volume can dominate the running time of accelerated proximal-gradient methods. In particular, the experiment isolates the mechanism behind the
$\sqrt{\vol(\mathcal{B})}$-term in the work bound in \Cref{thm:double_reg_work},
and shows empirically that, as $\vol(\mathcal{B})$ increases, the accelerated method can become slower than its non-accelerated counterpart.

We use the core-boundary-exterior synthetic construction from \Cref{sec:experiments}, and vary only the boundary size $|\mathcal{B}|$ (and hence $\vol(\mathcal{B})$), keeping the core, exterior, and all degree/connectivity parameters fixed. On each instance of the sweep we solve the $\ell_1$-regularized PageRank objective \eqref{eq:reg_personalized_pagerank}
with $\alpha=0.20$ and $\rho=10^{-4}$, comparing ISTA and FISTA under the common initialization,
parameter choices, stopping protocol, and work accounting described in
\Cref{sec:experiments}. We set $\varepsilon=10^{-6}$. 

\Cref{fig:work_vs_volumeB} plots the work to reach the common stopping target
as a function of $\vol(\mathcal{B})$.
The key trend is that FISTA becomes increasingly expensive as $\vol(\mathcal{B})$ grows. For sufficiently large $\vol(\mathcal{B})$ it becomes slower than ISTA.
This is exactly the behavior suggested by the bound in \Cref{thm:double_reg_work}
(and in particular by the $\sqrt{\vol(\mathcal{B})}/(\rho\alpha^{3/2})$ term):
as the boundary volume grows, the potential cost of spurious exploration in the boundary grows as well.

\subsection{Sweeps in \texorpdfstring{$\rho$}{rho}, \texorpdfstring{$\alpha$}{alpha} and \texorpdfstring{$\varepsilon$}{epsilon} at fixed boundary size}
\label{subsec:b600_sweeps}

We fix $|\mathcal{B}|=600$, and
we run three sweeps (summarized in \Cref{fig:B600_sweeps_1x4}): (i) an $\rho$-sweep, reported for both a dense-core (clique) instance and a sparse-core variant (connected, $20\%$ of clique
edges) to confirm that the observed $\rho$-dependence is not an artifact of the symmetric clique core; (ii) an $\alpha$-sweep
with $\rho=10^{-4}$; and (iii) an $\varepsilon$-sweep at fixed $\alpha=0.20$ (complete details in
\cref{app:b600_sweeps_full}).

The $\rho$ sweeps (\cref{fig:B600_work_vs_rho_dense,fig:B600_work_vs_rho_sparse}) show that work decreases as $\rho$ increases and collapses to $0$ beyond the
trivial-solution threshold; across the sweep, ISTA and FISTA exhibit qualitatively similar $1/\rho$-type scaling,
consistent with the $\rho$-dependence in \Cref{thm:double_reg_work} for fixed $\alpha$ and fixed boundary size.
The $\alpha$ sweep (\cref{fig:B600_work_vs_alpha}) shows increasing work as $\alpha$ decreases, and FISTA can be
slower than ISTA over a substantial small-$\alpha$ range, consistent with the interpretation of \Cref{thm:double_reg_work}.
Finally, the $\varepsilon$ sweep (\cref{fig:B600_work_vs_epsilon}) shows increasing work as the tolerance decreases. FISTA is faster for small $\varepsilon$, consistent with the interpretation of \Cref{thm:double_reg_work}.

\begin{figure}[htb!]
    \centering
    \begin{subfigure}[t]{0.25\linewidth}
        \centering
        \includegraphics[width=\linewidth]{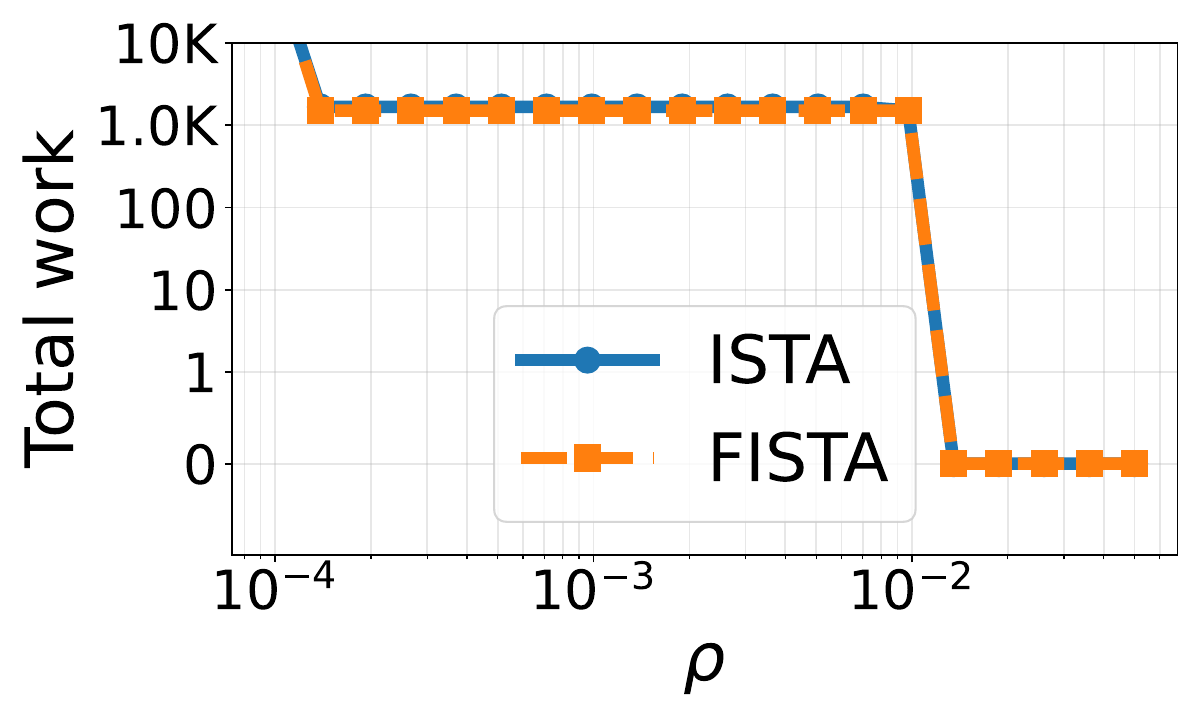}
        \caption{\textit{Dense core (clique).}}
        \label{fig:B600_work_vs_rho_dense}
    \end{subfigure}\hfill
    \begin{subfigure}[t]{0.25\linewidth}
        \centering
        \includegraphics[width=\linewidth]{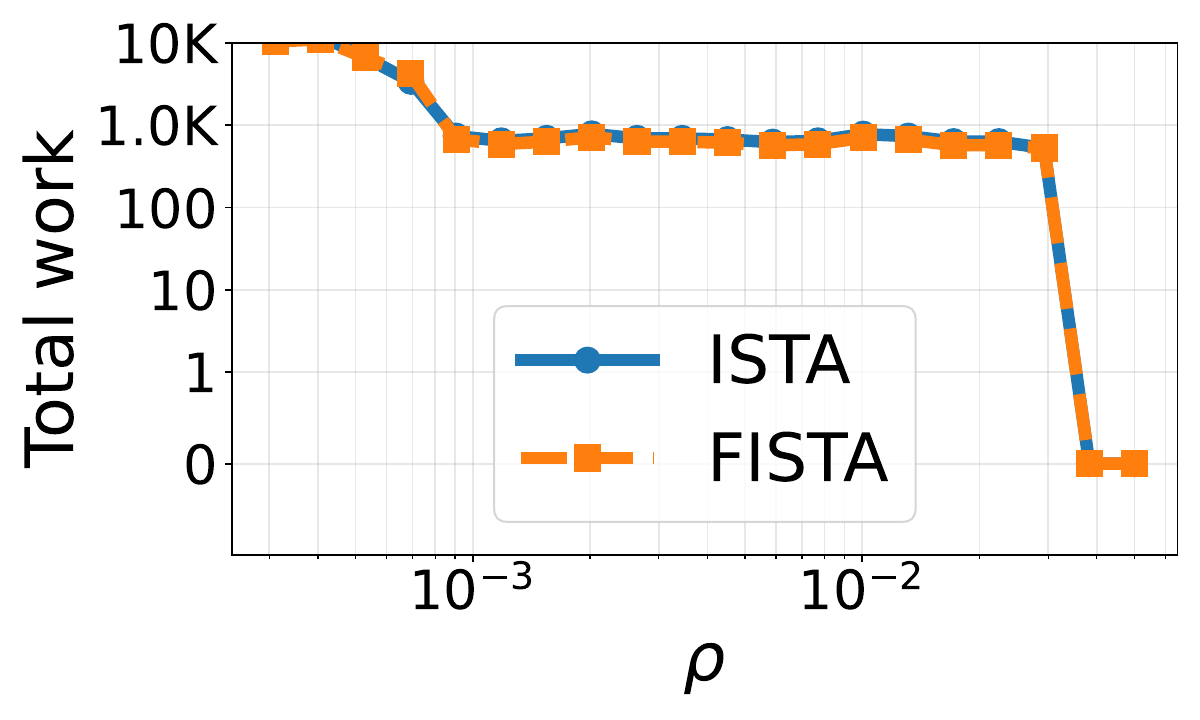}
        \caption{\textit{Sparse core.}}
        \label{fig:B600_work_vs_rho_sparse}
    \end{subfigure}\hfill
    \begin{subfigure}[t]{0.25\linewidth}
        \centering
        \includegraphics[width=\linewidth]{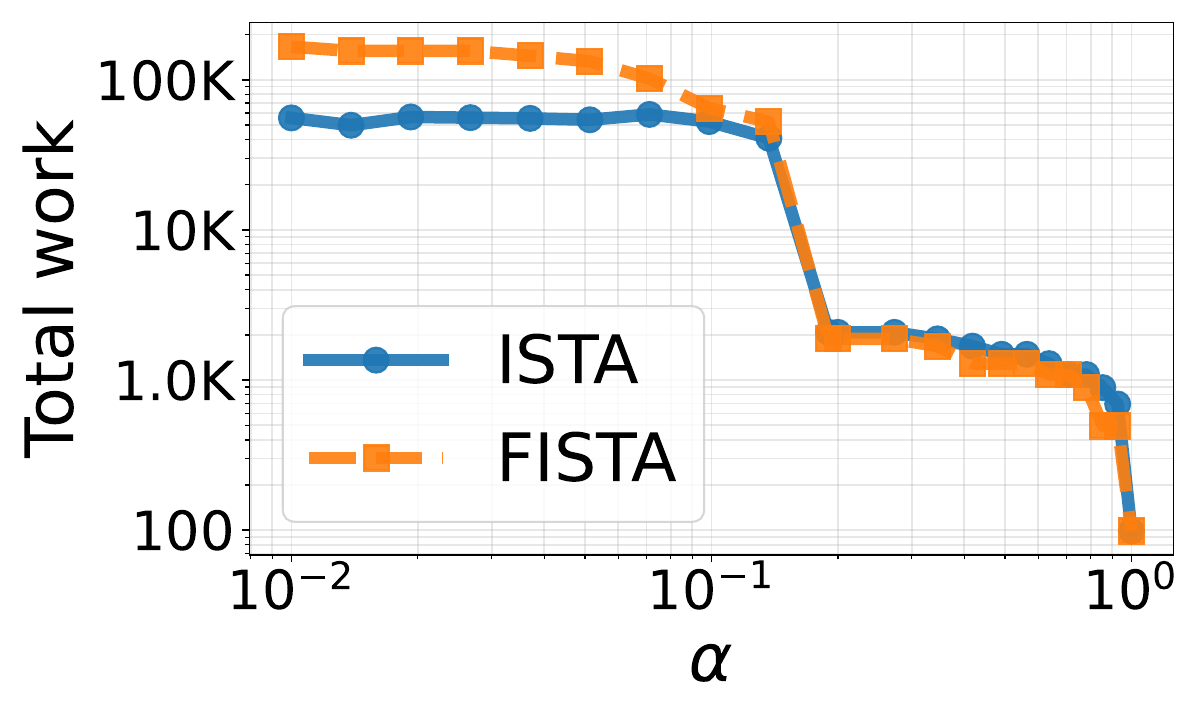}
        \caption{\textit{$\alpha$ sweep}}
        \label{fig:B600_work_vs_alpha}
    \end{subfigure}\hfill
    \begin{subfigure}[t]{0.25\linewidth}
        \centering
        \includegraphics[width=\linewidth]{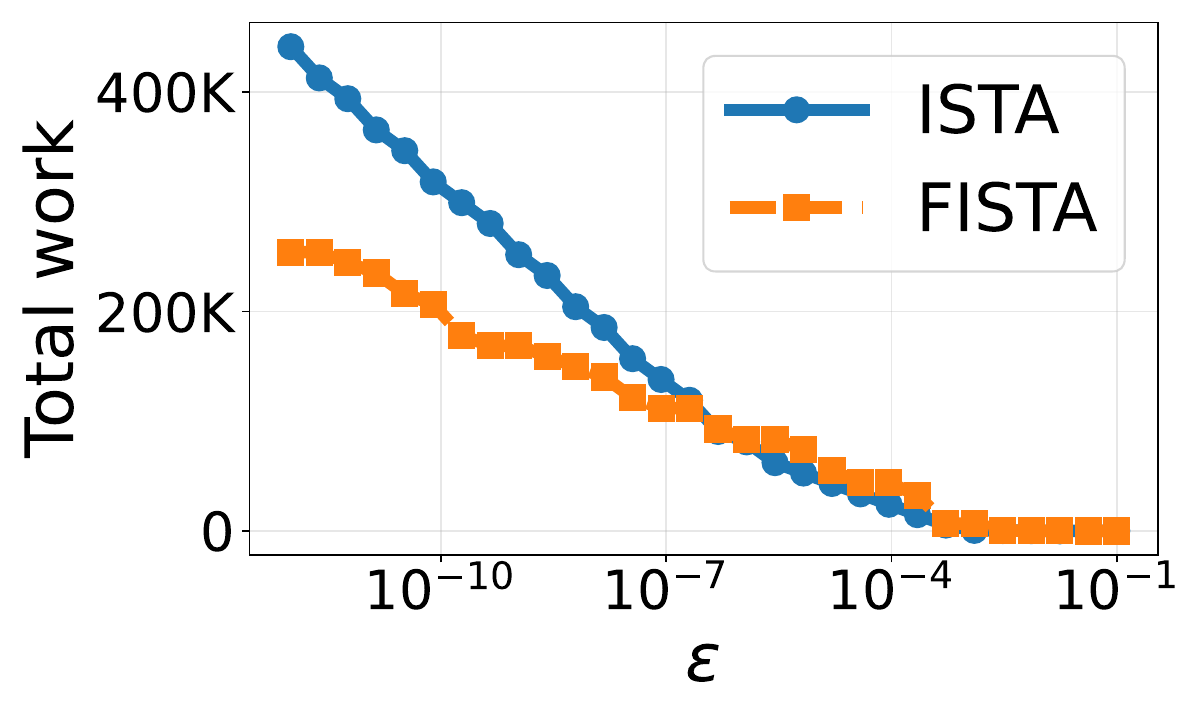}
        \caption{\textit{$\varepsilon$ sweep}}
        \label{fig:B600_work_vs_epsilon}
    \end{subfigure}
    \caption{\textit{Sweeps at fixed $|\mathcal{B}|=600$.}
    \cref{fig:B600_work_vs_rho_dense} shows the $\rho$-sweep with a dense core (clique) on a fresh randomized graph per $\rho$; \cref{fig:B600_work_vs_rho_sparse} shows the $\rho$-sweep with a sparse core (connected, $20\%$ of clique edges) on a fresh randomized graph per $\rho$.
    \cref{fig:B600_work_vs_alpha} sweeps $\alpha$ at a fixed tolerance $\varepsilon=10^{-6}$ on a single instance constructed so that the no-percolation condition holds at the smallest swept value, with parameters selected by an inexpensive auto-tuning step (\cref{app:b600_sweeps_full}).
    \cref{fig:B600_work_vs_epsilon} sweeps the tolerance $\varepsilon$ at fixed $\alpha=0.20$ on the baseline unweighted instance.}
    \label{fig:B600_sweeps_1x4}
\end{figure}

\subsection{Real-data benchmarks on SNAP graphs}
\label{subsec:real_data_experiments}

Our synthetic experiments use a deliberate core-boundary-exterior construction in order to satisfy the no-percolation assumption.
The real-data benchmarks in this subsection are at the opposite end of the spectrum: heterogeneous SNAP \citep{leskovec2014snap}
networks whose connectivity and degree profiles are not engineered to fit the synthetic template.
We include these datasets to illustrate both a positive and a negative real example for acceleration. On \texttt{com-Amazon}, \texttt{com-DBLP}, and \texttt{com-Youtube} we
typically observe a consistent work reduction with FISTA, whereas \texttt{com-Orkut} exhibits a setting where FISTA can be slower due to costly exploration beyond the optimal support. \footnote{For each dataset we sample $300$ seed nodes uniformly at random from the
non-isolated vertices; the same seed set is reused for both ISTA and FISTA and across all sweeps. In the sweep plots below, solid lines are means over the $300$ seeds and the shaded bands show the interquartile range (25\%-75\%).}.

\textbf{Work vs.\ parameter $\alpha$.}
We sweep $\alpha$ over a log-spaced grid in $[10^{-3},0.9]$ while fixing $\rho=10^{-4}$ and
$\varepsilon=10^{-8}$\footnote{Note that $\varepsilon = 10^{-8}$ is different from the value used in the synthetic experiments, which was $\varepsilon = 10^{-6}$. This is because we observed that the latter setting was too large for the real data to produce meaningful plots.}; results are in \Cref{fig:real_work_vs_alpha}. On \texttt{com-Amazon}, \texttt{com-DBLP}, and \texttt{com-Youtube}, FISTA consistently reduces work relative to ISTA
across the full $\alpha$ range.
On \texttt{com-Orkut}, however, FISTA can be slower than ISTA for small $\alpha$ before becoming competitive again
at moderate and large $\alpha$, illustrating that acceleration can lose under our work metric.
\begin{figure}[t!]
    \centering
    \begin{subfigure}[t]{0.25\linewidth}
        \centering
        \includegraphics[width=\linewidth]{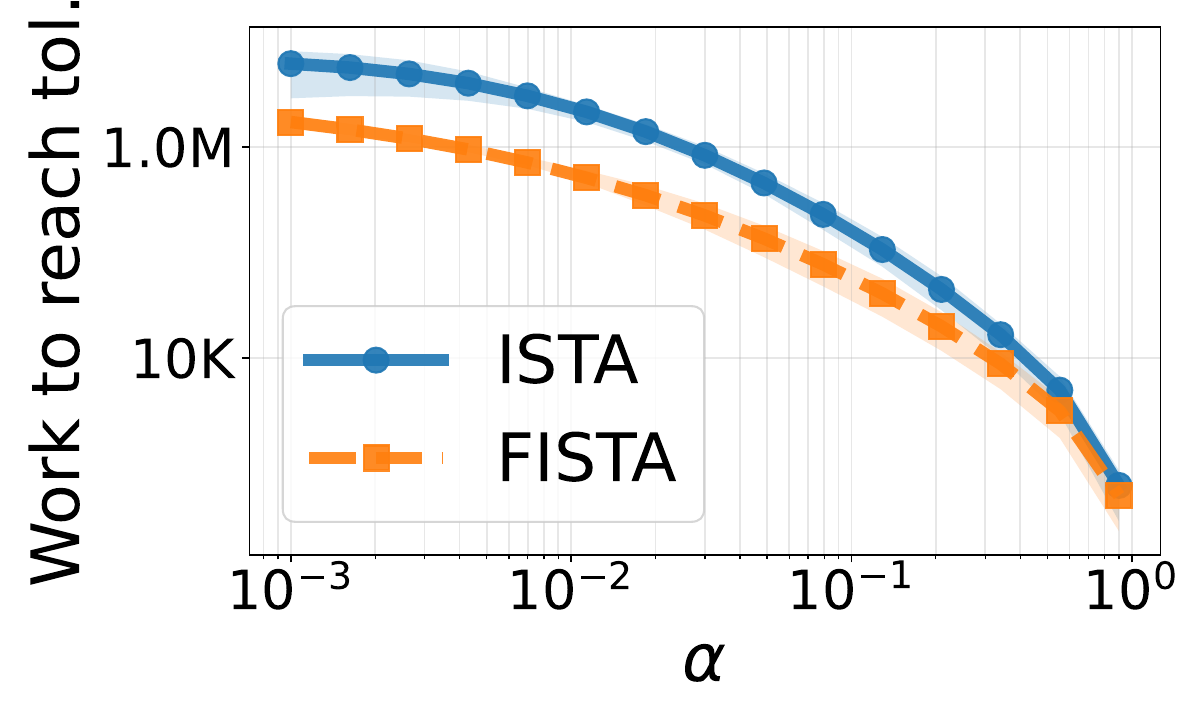}
        \caption{\texttt{com-Amazon}.}
        \label{fig:real_work_vs_alpha_amazon}
    \end{subfigure}\hfill
    \begin{subfigure}[t]{0.25\linewidth}
        \centering
        \includegraphics[width=\linewidth]{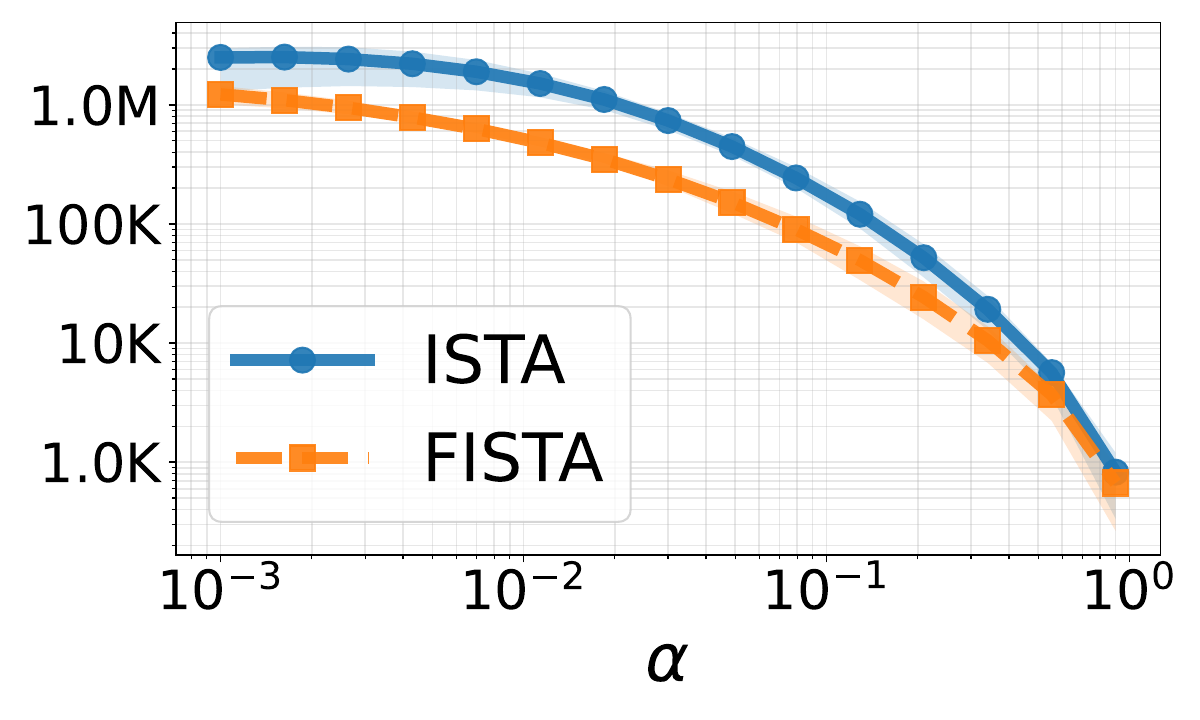}
        \caption{\texttt{com-DBLP}.}
        \label{fig:real_work_vs_alpha_dblp}
    \end{subfigure}\hfill
    \begin{subfigure}[t]{0.25\linewidth}
        \centering
        \includegraphics[width=\linewidth]{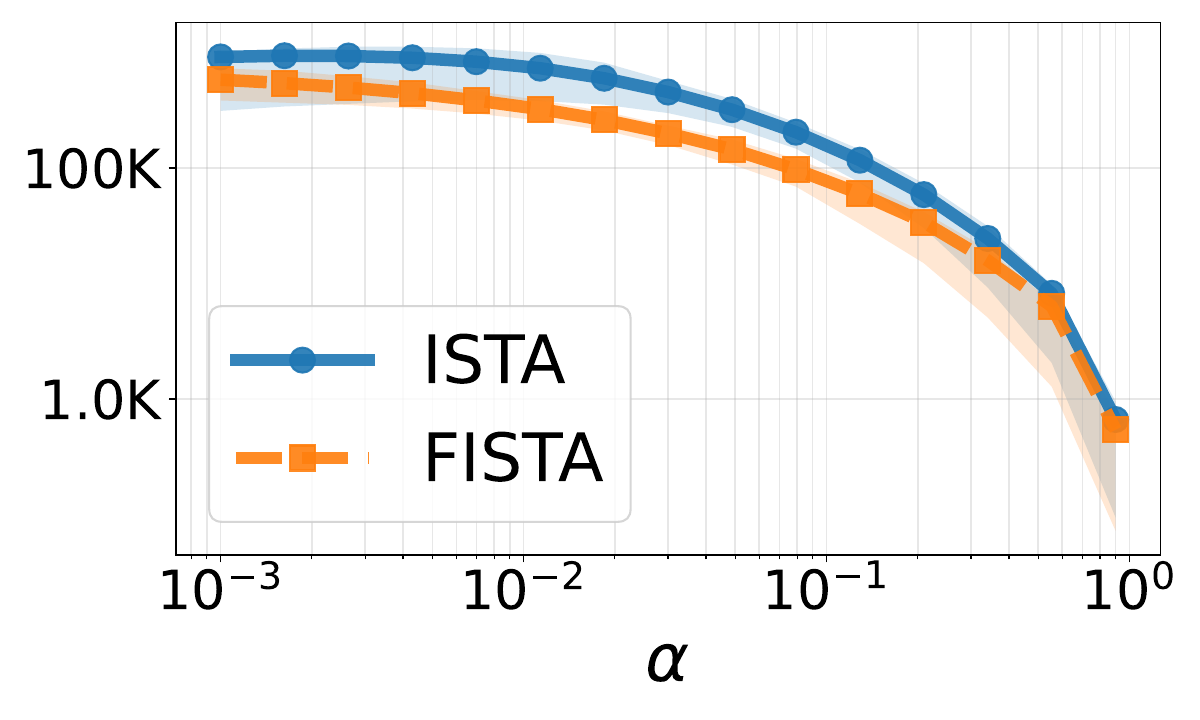}
        \caption{\texttt{com-Youtube}.}
        \label{fig:real_work_vs_alpha_youtube}
    \end{subfigure}\hfill
    \begin{subfigure}[t]{0.25\linewidth}
        \centering
        \includegraphics[width=\linewidth]{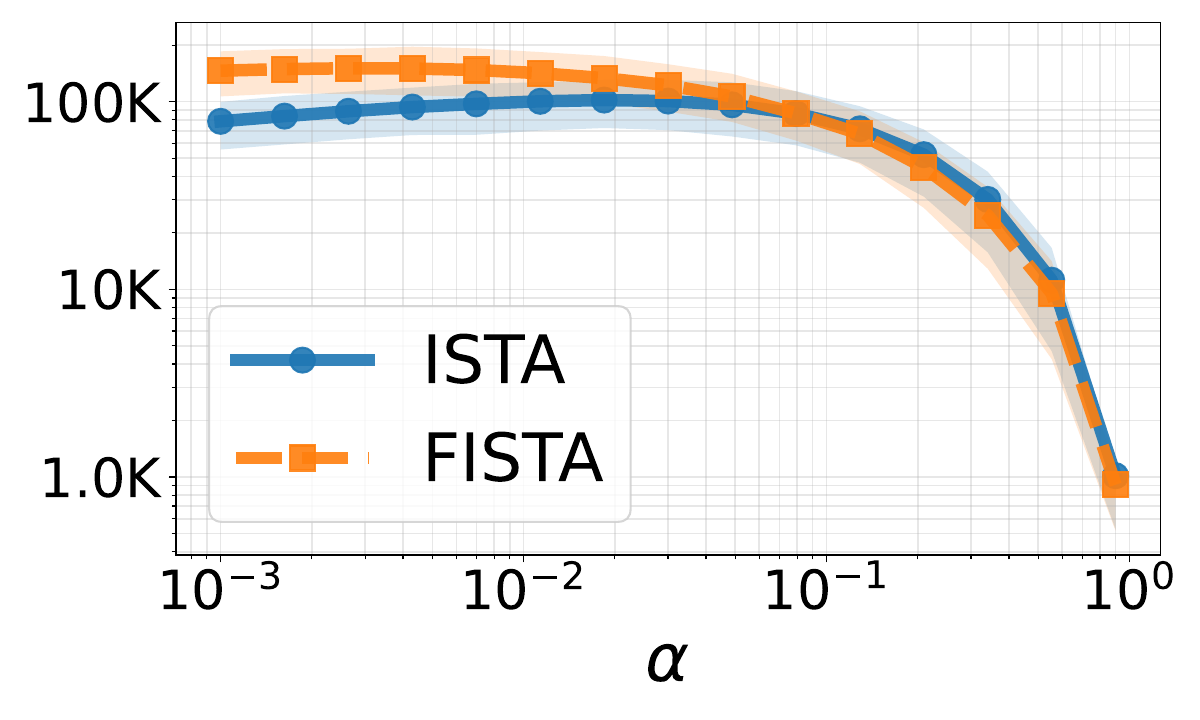}
        \caption{\texttt{com-Orkut}.}
        \label{fig:real_work_vs_alpha_orkut}
    \end{subfigure}
    \caption{\textit{Real graphs: work vs.\ $\alpha$.}
    Work to reach tolerance $10^{-8}$ as a function of $\alpha$,
    with $\rho=10^{-4}$ fixed. Curves show mean over $300$ random seeds; shaded bands are interquartile ranges.}
    \label{fig:real_work_vs_alpha}
\end{figure}

\textbf{Work vs.\ KKT tolerance $\varepsilon$.}
We next fix $\alpha=0.20$ and $\rho=10^{-4}$ and sweep the tolerance
$\varepsilon$ over a log-spaced grid in $[10^{-8},10^{-2}]$; results are in \Cref{fig:real_work_vs_epsilon}.
Tightening the tolerance (smaller $\varepsilon$) increases work for both methods, and FISTA typically
achieves the same tolerance with less total work on these datasets, though the gap can be small (notably on \texttt{com-Orkut})
for intermediate tolerances.
\begin{figure}[t!]
    \centering
    \begin{subfigure}[t]{0.25\linewidth}
        \centering
        \includegraphics[width=\linewidth]{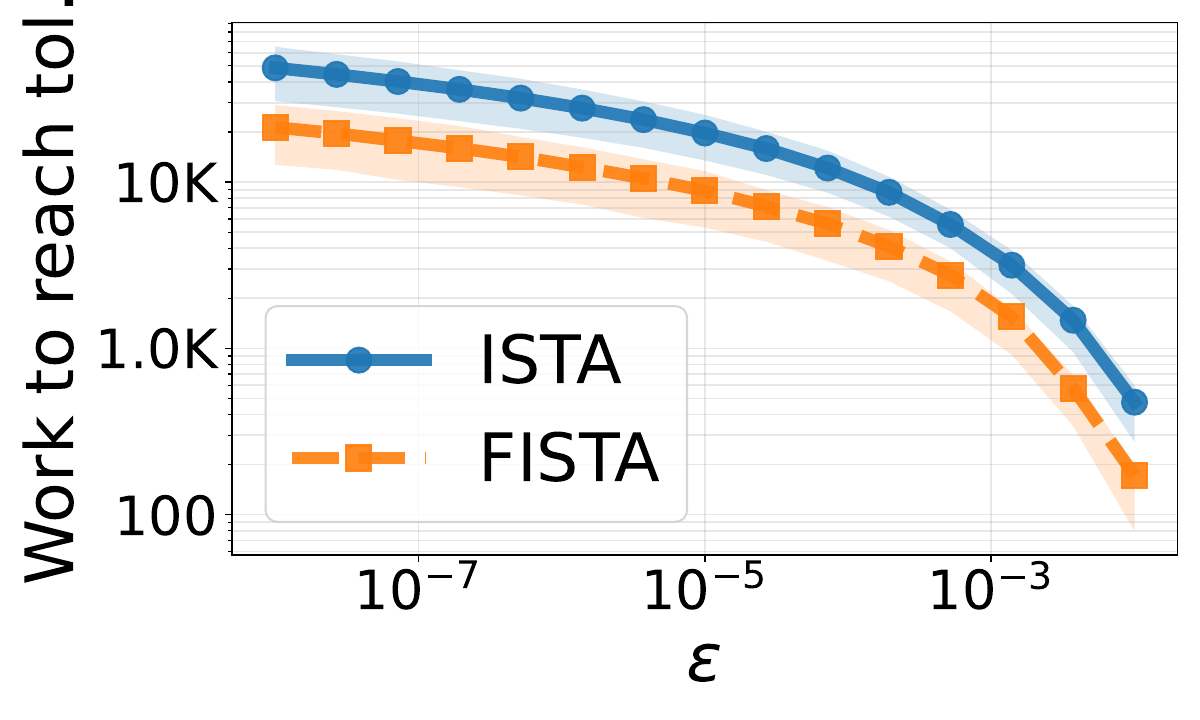}
        \caption{\texttt{com-Amazon}.}
        \label{fig:real_work_vs_epsilon_amazon}
    \end{subfigure}\hfill
    \begin{subfigure}[t]{0.25\linewidth}
        \centering
        \includegraphics[width=\linewidth]{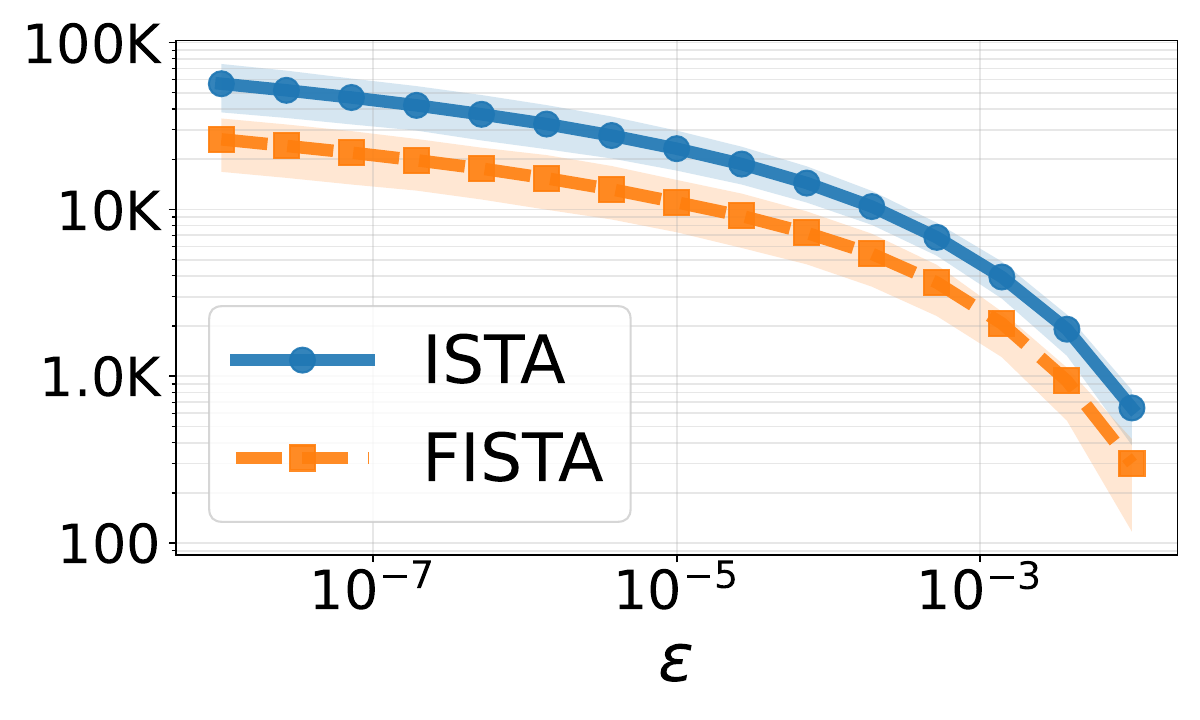}
        \caption{\texttt{com-DBLP}.}
        \label{fig:real_work_vs_epsilon_dblp}
    \end{subfigure}\hfill
    \begin{subfigure}[t]{0.25\linewidth}
        \centering
        \includegraphics[width=\linewidth]{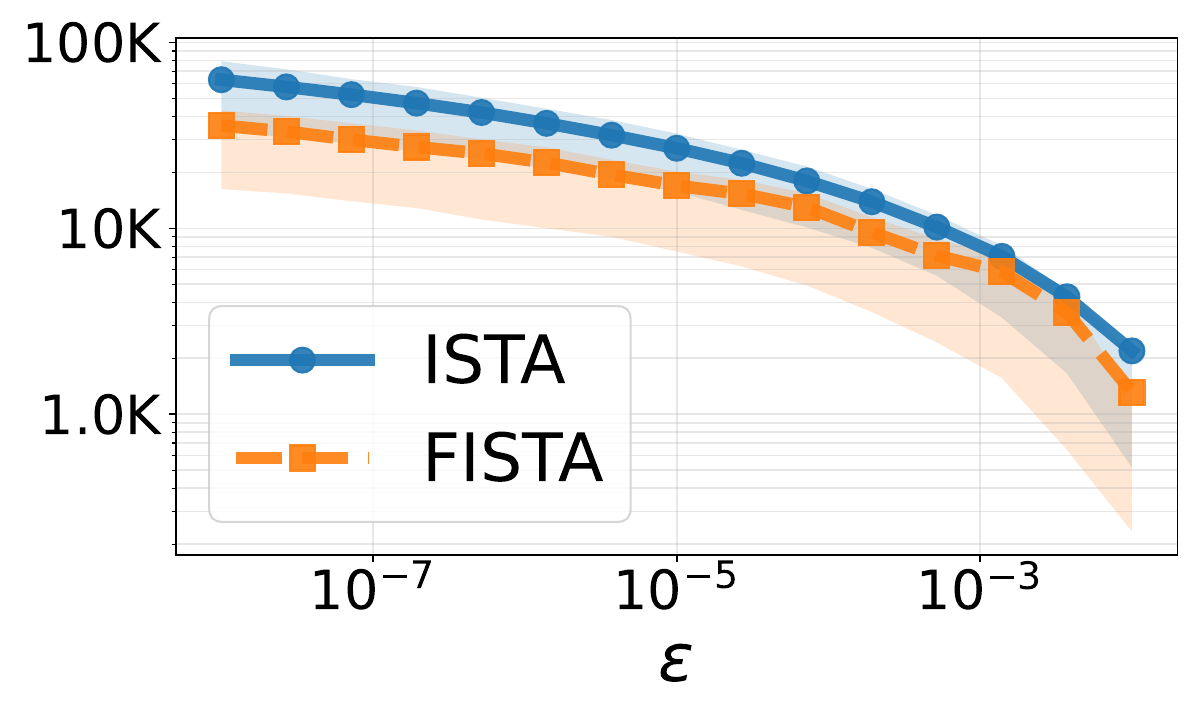}
        \caption{\texttt{com-Youtube}.}
        \label{fig:real_work_vs_epsilon_youtube}
    \end{subfigure}\hfill
    \begin{subfigure}[t]{0.25\linewidth}
        \centering
        \includegraphics[width=\linewidth]{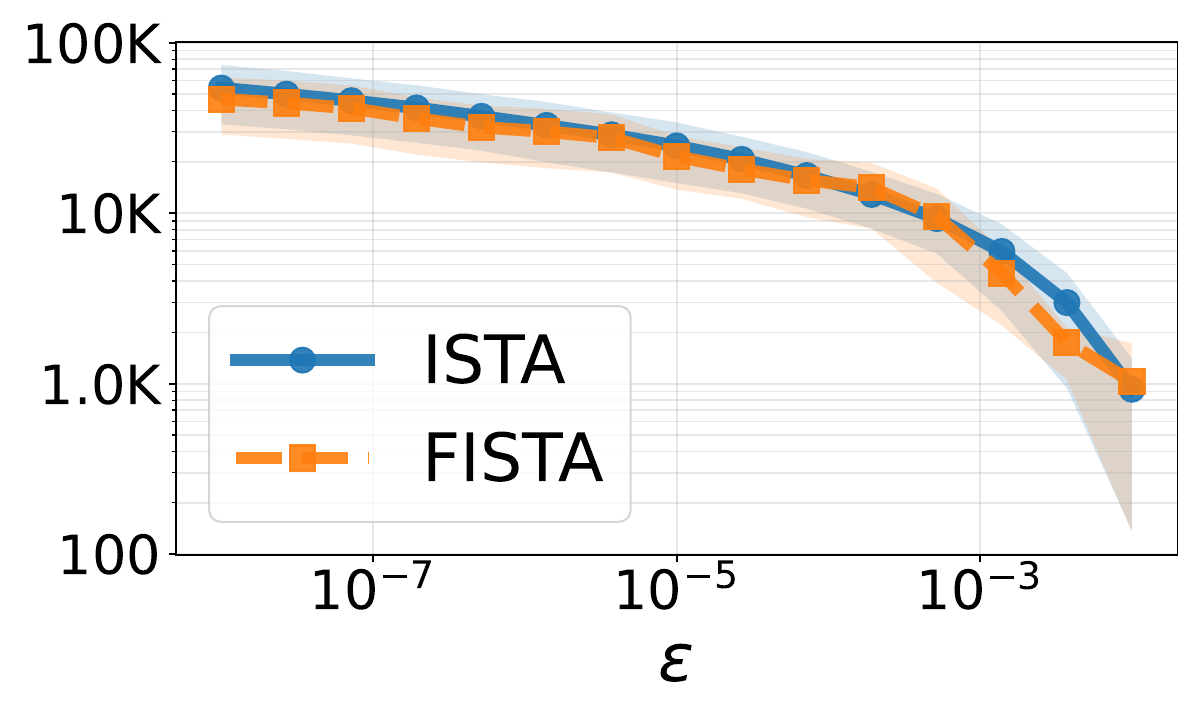}
        \caption{\texttt{com-Orkut}.}
        \label{fig:real_work_vs_epsilon_orkut}
    \end{subfigure}
    \caption{\textit{Real graphs: work vs.\ KKT tolerance.}
    Work to reach $\varepsilon$,
    with $\alpha=0.20$ and $\rho=10^{-4}$ fixed. Curves show mean over $300$ random seeds; shaded bands are interquartile ranges.}
    \label{fig:real_work_vs_epsilon}
\end{figure}

\begin{figure}[t!]
    \centering
    \begin{subfigure}[t]{0.25\linewidth}
        \centering
        \includegraphics[width=\linewidth]{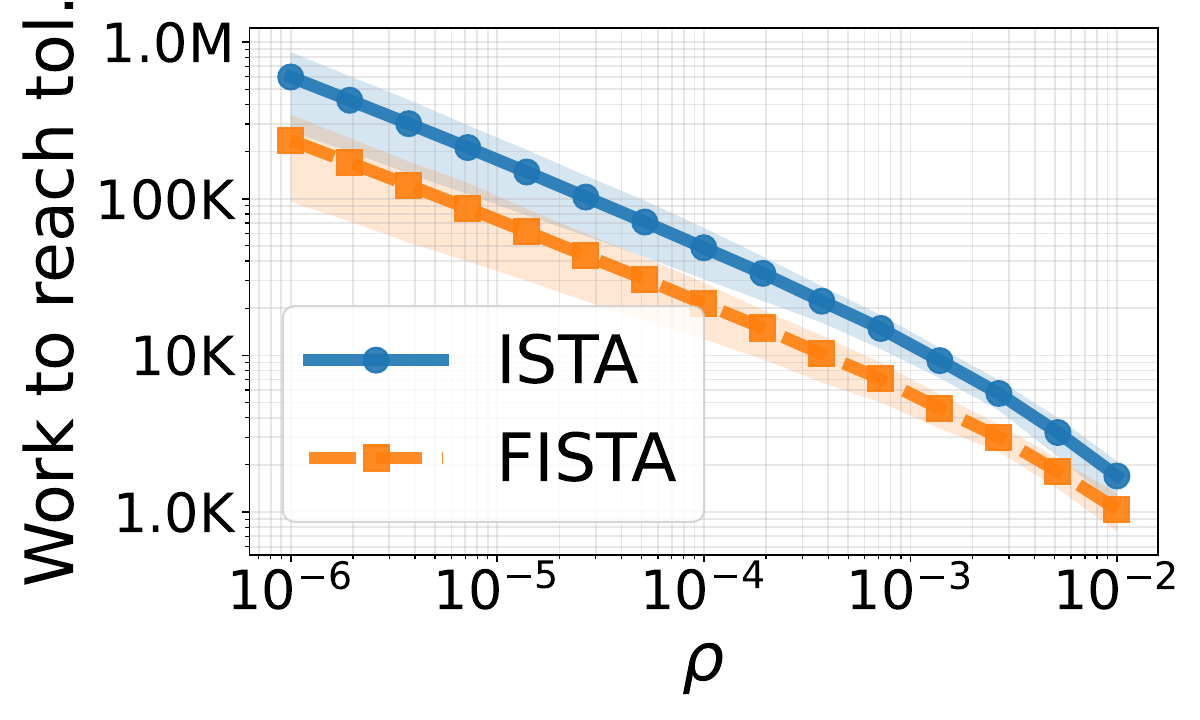}
        \caption{\texttt{com-Amazon}.}
        \label{fig:real_work_vs_rho_amazon}
    \end{subfigure}\hfill
    \begin{subfigure}[t]{0.25\linewidth}
        \centering
        \includegraphics[width=\linewidth]{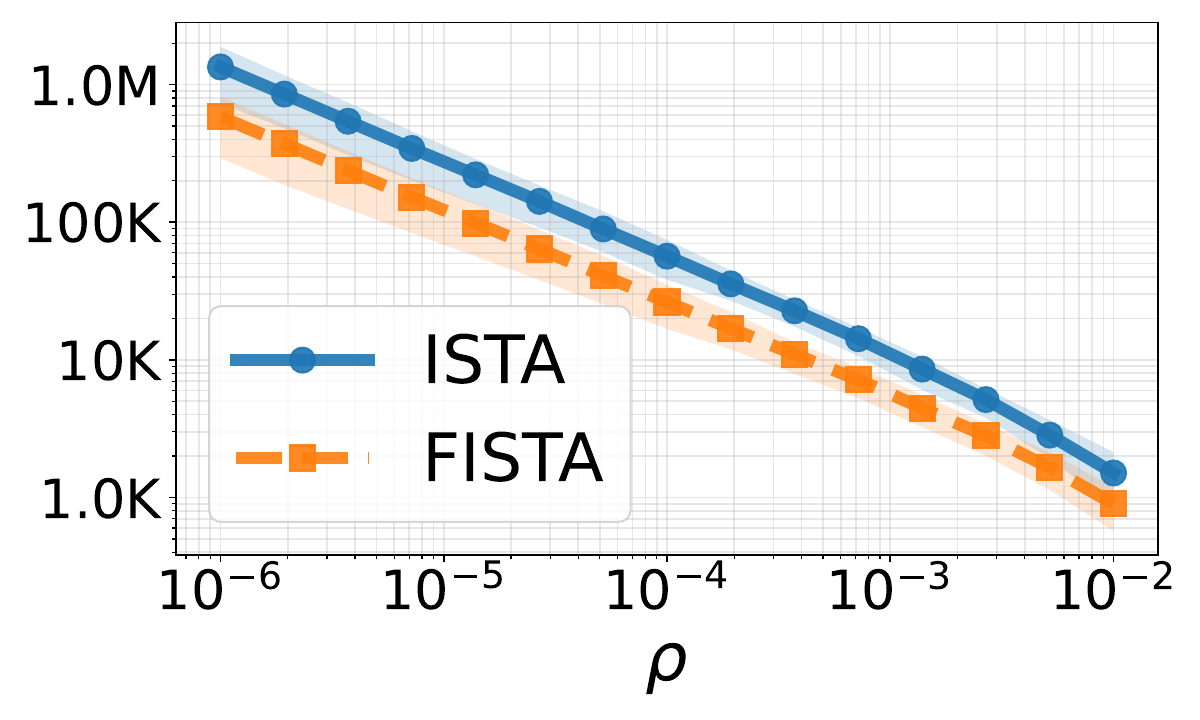}
        \caption{\texttt{com-DBLP}.}
        \label{fig:real_work_vs_rho_dblp}
    \end{subfigure}\hfill
    \begin{subfigure}[t]{0.25\linewidth}
        \centering
        \includegraphics[width=\linewidth]{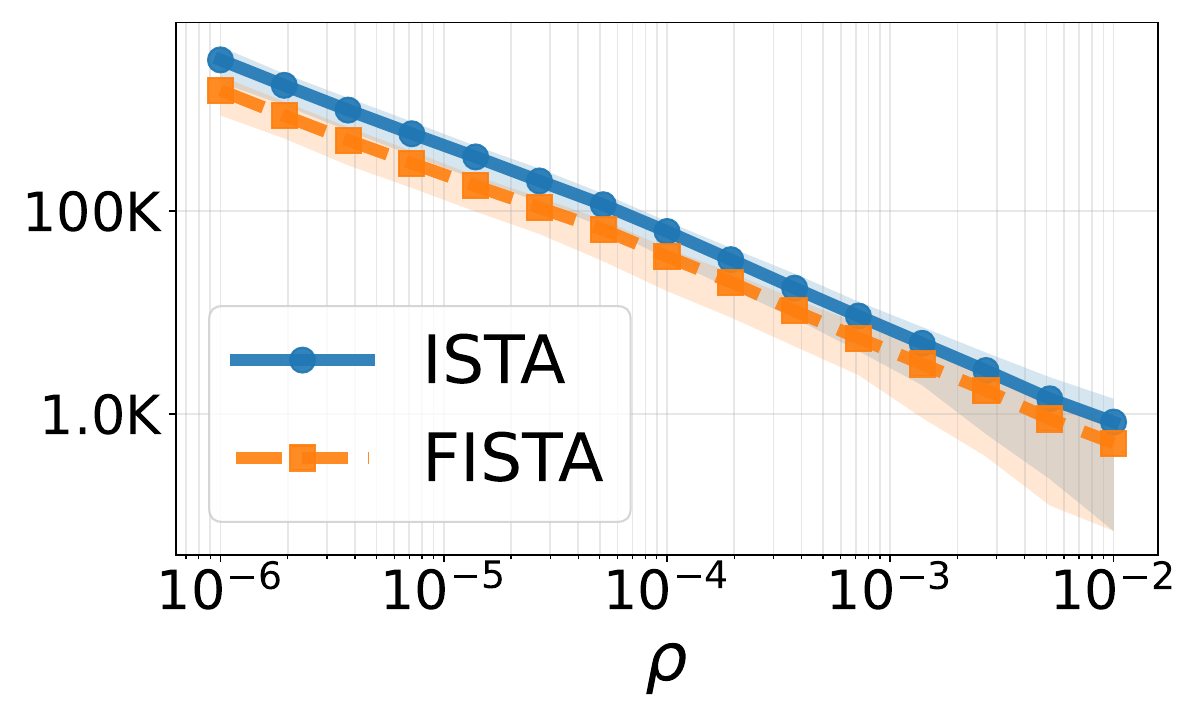}
        \caption{\texttt{com-Youtube}.}
        \label{fig:real_work_vs_rho_youtube}
    \end{subfigure}\hfill
    \begin{subfigure}[t]{0.25\linewidth}
        \centering
        \includegraphics[width=\linewidth]{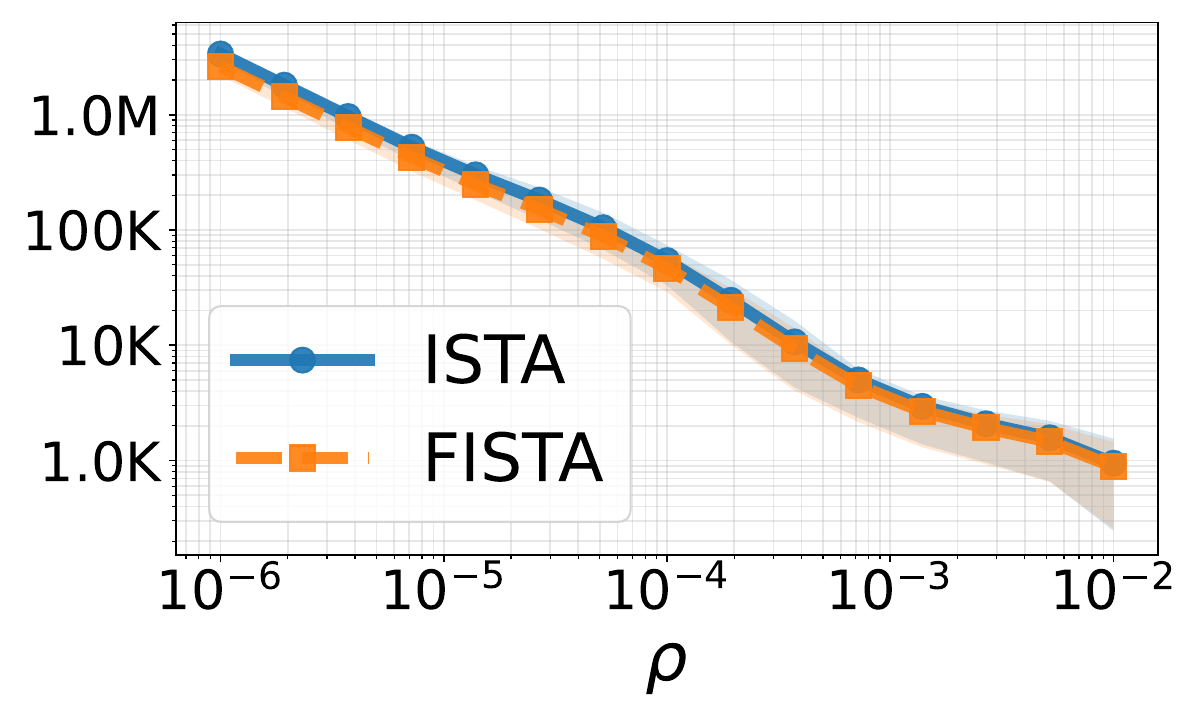}
        \caption{\texttt{com-Orkut}.}
        \label{fig:real_work_vs_rho_orkut}
    \end{subfigure}
    \caption{\textit{Real graphs: work vs.\ $\rho$.}
    Work to reach $10^{-8}$ as a function of $\rho$, with $\alpha=0.20$ fixed.
    Curves show mean over $300$ random seeds; shaded bands are interquartile ranges.}
    \label{fig:real_work_vs_rho}
\end{figure}

\textbf{Work vs.\ sparsity parameter $\rho$.}
Finally, we fix $\alpha=0.20$ and $\varepsilon=10^{-8}$ and sweep $\rho$ over a log-spaced grid in
$[10^{-6},10^{-2}]$; results are shown in \Cref{fig:real_work_vs_rho}.
As $\rho$ increases (stronger regularization), the solutions become more localized and the work decreases sharply.
Across all four graphs, FISTA generally improves upon ISTA by a modest constant factor for these settings.

\textbf{Additional diagnostics.}
Because total work conflates iteration count and per-iteration cost, aggregate curves alone do not explain why the ISTA--FISTA ranking differs across datasets. In \cref{app:real_diagnostics} we separate these two factors and we report degree-tail summaries. In particular, the diagnostics show that \texttt{com-Orkut}'s slowdowns are driven by costly transient exploration, i.e., small sets of high-degree activations inflate the per-iteration work and can offset (or even reverse) the iteration savings of acceleration
(\Cref{fig:real_tradeoff_iters_cost,fig:real_degree_ccdf}).

\section{Conclusion, limitations and future work}
We analyzed classical FISTA for
$\ell_1$-regularized PageRank under a degree-weighted locality work model. For the slightly over-regularized objective, under an explicit confinement
assumption, the resulting complexity decomposes into acceleration together with an explicit
overhead that quantifies momentum-induced transient exploration. We also provide a lower bound for the total work of standard FISTA on the original objective, based on a family of bad instances. Overall, we provide a comprehensive understanding of the behavior of FISTA on PageRank and regimes where it yields advantages. Additional work could extend our framework to other algorithms.

\ifarxiv

\section*{Acknowledgements}
K.~Fountoulakis would like to acknowledge the support of the Natural Sciences and Engineering Research Council of Canada (NSERC). Cette recherche a \'et\'e financ\'ee par le Conseil de recherches en sciences naturelles et en g\'enie du Canada (CRSNG),
[RGPIN-2019-04067, DGECR-2019-00147].

D.~Martínez-Rubio was partially funded by the Spanish Ministry of Science, Innovation, and Universities and by the State Research Agency (MICIU/AEI/10.13039/501100011033/) under grant PID2024-160448NA-I00. He was also funded by La Caixa Junior Leader Fellowship 2025.

\else

\fi

\printbibliography[heading=bibintoc] %

\clearpage

\appendix

\section{Proofs}\label{sec:proofs}

\begin{lemma}[Initial gap]\label{lemma:initial_gap}
Assume the seed is a single node $v$ so $s=e_v$ and initialize $x_0=0$.
Then $F_\rho(0)=0$ and
\[
\Delta_0 = F_\rho(0)-F_\rho(x^\star) = -F_\rho(x^\star) \le \frac{\alpha}{2d_v}\le \frac{\alpha}{2}.
\]
\end{lemma}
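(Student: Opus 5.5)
The plan is to bound $-F_\rho(x^\star)$ by the unregularized minimum, using $g\ge 0$. Since $f(0)=0$ and $g(0)=0$, we get $F_\rho(0)=0$ at once, so $\Delta_0=-F_\rho(x^\star)$. Because $g\ge 0$, we have $F_\rho(x)\ge f(x)$ for every $x$, and therefore
\[
F_\rho(x^\star)\;\ge\;\min_x f(x)\;=\;-\tfrac{\alpha^2}{2}\,\innp{D^{-1/2}s,\;Q^{-1}D^{-1/2}s}.
\]
This uses the closed form of the minimum of the strongly convex quadratic $f$, attained at $Q^{-1}\alpha D^{-1/2}s$. With $s=e_v$ this becomes
\[
F_\rho(x^\star)\;\ge\;-\tfrac{\alpha^2}{2d_v}\,(Q^{-1})_{vv},
\]
so it remains to control the single diagonal entry $(Q^{-1})_{vv}$.

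Since $Q\succcurlyeq \alpha I$, we have $Q^{-1}\preccurlyeq \alpha^{-1}I$, and hence $(Q^{-1})_{vv}=e_v^\top Q^{-1}e_v\le 1/\alpha$. Substituting gives $-F_\rho(x^\star)\le \frac{\alpha^2}{2d_v}\cdot\frac1\alpha=\frac{\alpha}{2d_v}$. The last inequality, $\frac{\alpha}{2d_v}\le\frac{\alpha}{2}$, holds because $d_v\ge1$ (degrees are positive integers, as assumed in the preliminaries).

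An equivalent route avoids computing the minimizer: for any $x$, strong convexity gives $f(x)\ge \frac{\alpha}{2}\norm{x}^2-\frac{\alpha}{\sqrt{d_v}}x_v\ge \frac{\alpha}{2}x_v^2-\frac{\alpha}{\sqrt{d_v}}x_v\ge -\frac{\alpha}{2d_v}$, by minimizing the scalar quadratic in $x_v$. This is the version I would write, since it needs only $Q\succcurlyeq\alpha I$.

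There is no real obstacle here. The only points to check are the sign conventions and that the $D^{-1/2}$ scaling produces the factor $1/d_v$ rather than $1/\sqrt{d_v}$; it does, because the linear term $\frac{\alpha}{\sqrt{d_v}}x_v$ is squared when the scalar quadratic is minimized.
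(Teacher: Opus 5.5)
Your proof is correct and matches the paper's argument: drop $g\ge 0$ to get $F_\rho(x^\star)\ge \min_x f(x) = -\frac12 b^\top Q^{-1}b$ with $b=\alpha D^{-1/2}e_v$, then use $Q^{-1}\preccurlyeq \alpha^{-1}I$ to obtain $-F_\rho(x^\star)\le \alpha/(2d_v)$. The variant you say you would write, bounding $f(x)\ge \frac{\alpha}{2}x_v^2-\frac{\alpha}{\sqrt{d_v}}x_v\ge -\frac{\alpha}{2d_v}$ directly, is also valid. It is only a minor reformulation of the same idea, using $Q\succcurlyeq\alpha I$ without forming $Q^{-1}$.
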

\begin{proof}
We have $F_\rho(x)\ge f(x)$ and thus
$F_\rho(x^\star)\ge \min_x f(x)$.
The unconstrained minimizer of the quadratic $f$ is $x_f^\star=Q^{-1}b$ and $\min f = -\frac12 b^\top Q^{-1}b$.
Because $Q\succeq \alpha I$, we have $Q^{-1}\preceq \frac{1}{\alpha} I$, and therefore
\[
b^\top Q^{-1}b \;\le\; \frac{1}{\alpha}  b^\top b \;=\; \frac{1}{\alpha}\alpha^2 \|D^{-1/2}s\|_2^2 \;=\; \alpha s^\top D^{-1}s.
\]
For $s=e_v$, we have $s^\top D^{-1}s = 1/d_v$, which yields
\[
\min f \ge -\frac{\alpha}{2d_v},
\qquad
\text{and hence}
\qquad
-F_\rho(x^\star)\le -\min f \le \frac{\alpha}{2d_v}.
\]
\end{proof}

We now state the classical guarantee on the function-value convergence on FISTA.

\begin{fact}[FISTA convergence rate]\label{fact:fista_rate}
Assume $f$ is $L$-smooth and $F$ is $\mu$-strongly convex with respect to $\norm{\cdot}_2$.
Run \cref{eq:apg_fista} with $\eta = 1 / L$ and
$\beta \defi \frac{\sqrt{L/\mu} - 1}{\sqrt{L / \mu}+1} \in [0, 1)$.
Then
\[
F(x_k) - F(x^\star)
\leq
2\left(F(x_0)-F(x^\star) \right) \left(1 - \sqrt{\frac{\mu}{L}}\right)^k
\qquad \text{for all } k \geq 1.
\]
\end{fact}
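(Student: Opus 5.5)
This is the classical linear rate of constant-momentum FISTA (e.g.\ \citep{beck2017first}), so one option is simply to cite it. I would nonetheless prove it with a Lyapunov argument, since the constant $2$ comes out directly from strong convexity at the initial point. Write $q\defi\sqrt{\mu/L}$, so that $\beta=(1-q)/(1+q)$. Define the auxiliary sequence
\[
z_k\defi x_{k-1}+\tfrac1q(x_k-x_{k-1}),
\]
and the potential
\[
\Phi_k\defi F(x_k)-F(x^\star)+\tfrac{\mu}{2}\|z_k-x^\star\|_2^2 .
\]
Since $x_{-1}=x_0=0$, we have $z_0=x_0$. The plan is to show $\Phi_{k+1}\le(1-q)\Phi_k$ for all $k\ge0$, and then conclude via $F(x_k)-F(x^\star)\le\Phi_k\le(1-q)^k\Phi_0$.

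\textbf{Step 1 (basic inequality).} Because $f$ is $L$-smooth and $F$ is $\mu$-strongly convex, the prox-gradient step $x_{k+1}=\prox_{g/L}(y_k-\nabla f(y_k)/L)$ satisfies, for every $x$,
\[
F(x)\ \ge\ F(x_{k+1})+L\langle y_k-x_{k+1},\,x-y_k\rangle+\tfrac{1}{2L}\|L(y_k-x_{k+1})\|_2^2+\tfrac{\mu}{2}\|x-y_k\|_2^2 .
\]
This follows from the descent lemma and the optimality condition of the prox. If only $f$ (rather than $g$) carries the strong convexity this is immediate; in our application $g$ is convex and $f$ is $\alpha$-strongly convex, which is the case covered here.

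\textbf{Step 2 (combination).} Apply the inequality at $x=x_k$ with weight $1-q$ and at $x=x^\star$ with weight $q$, and add. The function-value terms give $F(x_{k+1})-F(x^\star)\le(1-q)\bigl(F(x_k)-F(x^\star)\bigr)$ plus inner-product and quadratic terms. Next, check that the FISTA coupling is equivalent to
\[
y_k=\frac{x_k+q\,z_k}{1+q}
\quad\text{and}\quad
z_{k+1}=(1-q)z_k+q\,y_k-\tfrac{1}{\mu}\cdot L\,q\,(y_k-x_{k+1}).
\]
Both identities are linear-algebraic consequences of $\beta=(1-q)/(1+q)$ and $q^2L=\mu$. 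With them, the inner-product terms together with $\tfrac\mu2\|x^\star-y_k\|^2$ reorganize exactly into
\[
(1-q)\tfrac\mu2\|z_k-x^\star\|^2-\tfrac\mu2\|z_{k+1}-x^\star\|^2 .
\]
The leftover terms are nonpositive; in particular the $\tfrac{(1-q)\mu}{2}\|x_k-y_k\|^2$ term can be dropped. This yields $\Phi_{k+1}\le(1-q)\Phi_k$.

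\textbf{Step 3 (initialization).} Since $z_0=x_0$, strong convexity of $F$ at its minimizer gives $\tfrac\mu2\|x_0-x^\star\|^2\le F(x_0)-F(x^\star)$. Hence $\Phi_0\le2\bigl(F(x_0)-F(x^\star)\bigr)$, which produces the stated constant and finishes the proof.

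The main obstacle is Step 2: verifying that the given $\beta$ makes the cross terms telescope exactly. I would first fix the identity for $z_{k+1}$ by expanding $\|z_{k+1}-x^\star\|^2$ and matching coefficients of $\langle y_k-x_{k+1},\cdot\rangle$. If the exact identity fails, I would fall back on the standard estimate-sequence proof of Nesterov's constant-momentum scheme \citep{nesterov2004introductory}, adapted to the composite setting via Step 1.
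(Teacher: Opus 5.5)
Your proposal is correct and follows essentially the paper's route. The paper cites the constant-momentum FISTA rate of \citep[Section 10.7.7]{beck2017first}, which is proved by this same Lyapunov argument with the potential $F(x_k)-F(x^\star)+\tfrac{\mu}{2}\|z_k-x^\star\|_2^2$ under your hypothesis that $f$ itself is $\mu$-strongly convex (true for RPPR since $Q\succeq\alpha I$), and it then gets the factor $2$ from $\tfrac{\mu}{2}\|x_0-x^\star\|_2^2\le F(x_0)-F(x^\star)$ exactly as in your Step 3. Your Step 2 identities do hold, because $q z_k=(1+q)y_k-x_k$ and $Lq/\mu=1/q$, and after cancellation the only leftover is $-\tfrac{(1-q)\mu}{2}\|x_k-y_k\|_2^2-\tfrac{q(1-q)\mu}{2}\|z_k-y_k\|_2^2\le 0$, so no fallback is needed.
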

See \citep[Section 10.7.7]{beck2017first} and take into account that
$\frac{\mu}{2}\norm{x_0-x^\star}_2^2 \leq F(x_0)-F(x^\star)$ by strong convexity.
We note that the convergence guarantee above, along with strong convexity, yields bounds on the distance to the minimizer $x^\star$.

As a corollary, we can bound the distance to optimizer of the iterates along the whole computation path.
\begin{corollary}[FISTA iterates]\label{lem:yk_distance}%
In the setting of \cref{fact:fista_rate}, we have:
\begin{equation*}
\|y_0-x^\star\|_2^2 \le \frac{4\Delta_0}{\mu}
\quad \text{ and }\quad
\|y_k-x^\star\|_2^2 \le M\left(1-\sqrt{\frac{\mu}{L}}\right)^{k-1}
\quad \text{ for all } k \geq 1,
\end{equation*}
for $M \defi \frac{8\Delta_0}{\mu}\Bigl((1+\beta)^2 (1-\sqrt{\mu / L}) + \beta^2\Bigr)$.
\end{corollary}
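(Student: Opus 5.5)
The argument splits into the case $k=0$ and the case $k\ge 1$.

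\emph{Case $k=0$.} Since $x_{-1}=x_0=0$, we have $y_0=x_0=0$. Hence $\|y_0-x^\star\|_2^2=\|x_0-x^\star\|_2^2$. Strong convexity of $F$ at its minimizer gives $\frac{\mu}{2}\|x_0-x^\star\|_2^2\le F(x_0)-F(x^\star)=\Delta_0$. Therefore $\|y_0-x^\star\|_2^2\le 2\Delta_0/\mu\le 4\Delta_0/\mu$.

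\emph{Case $k\ge 1$: decomposition.} Write $q\defi 1-\sqrt{\mu/L}$. Rewrite the extrapolation as
\[
y_k-x^\star=(1+\beta)(x_k-x^\star)-\beta(x_{k-1}-x^\star).
\]
Using $\|a+b\|_2^2\le 2\|a\|_2^2+2\|b\|_2^2$, this gives
\[
\|y_k-x^\star\|_2^2\le 2(1+\beta)^2\|x_k-x^\star\|_2^2+2\beta^2\|x_{k-1}-x^\star\|_2^2 .
\]

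\emph{Bounding each term.} For every $j\ge 0$, strong convexity gives $\|x_j-x^\star\|_2^2\le \frac{2}{\mu}(F(x_j)-F(x^\star))$. Combining this with \cref{fact:fista_rate} for $j\ge 1$, and with the trivial bound $F(x_0)-F(x^\star)=\Delta_0\le 2\Delta_0$ for $j=0$, we get
\[
\|x_j-x^\star\|_2^2\le \frac{4\Delta_0}{\mu}\,q^{j}\qquad\text{for all } j\ge 0 .
\]

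\emph{Collecting powers of $q$.} Substituting with $j=k$ and $j=k-1$ yields
\[
\|y_k-x^\star\|_2^2\le \frac{8\Delta_0}{\mu}\bigl((1+\beta)^2q^k+\beta^2q^{k-1}\bigr)=\frac{8\Delta_0}{\mu}\bigl((1+\beta)^2q+\beta^2\bigr)q^{k-1}=M q^{k-1}.
\]
This is the claimed bound.

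The only delicate point is the index $j=k-1=0$ when $k=1$. There \cref{fact:fista_rate} is stated only for $k\ge1$, so we need the separate bound at $x_0$ noted above. The constant $2$ in the fact covers this, since $\Delta_0\le 2\Delta_0$ and $q^0=1$. Everything else is routine.
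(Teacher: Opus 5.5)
Your proof is correct and follows the paper's argument: the strong-convexity bound $\|x_j-x^\star\|_2^2\le\frac{4\Delta_0}{\mu}q^j$, the decomposition $y_k-x^\star=(1+\beta)(x_k-x^\star)-\beta(x_{k-1}-x^\star)$ with $\|a-b\|_2^2\le 2\|a\|_2^2+2\|b\|_2^2$, and then substitution. Your separate treatment of the index $j=0$ (using $\Delta_0\le 2\Delta_0$, since \cref{fact:fista_rate} is stated only for $k\ge1$) makes explicit a step the paper passes over silently.
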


Using the bounds on $\Delta_0$ and $\mu$ in \cref{sec:preliminaries}, one obtains that for \cref{eq:reg_personalized_pagerank}
it is $M \leq 20$ and $\Delta_0 / \mu \leq 1/2$.
Thus $\norm{y_k - x^\star}_2 \leq \sqrt{20}$ for all $k\geq 0$.

\begin{proof}%
Let $q \defi 1 - \sqrt{\mu / L}$. Strong convexity gives $F(x)-F(x^\star)\ge \frac{\mu}{2}\|x-x^\star\|_2^2$, hence, by \cref{fact:fista_rate}:
\[
\|x_k-x^\star\|_2^2 \le \frac{2}{\mu}(F(x_k)-F(x^\star)) \le \frac{4\Delta_0}{\mu}q^k,
\]
which already yields the result for $k=0$, using $y_0=x_0$. For $k\ge 1$, write $y_k-x^\star=(1+\beta)(x_k-x^\star)-\beta(x_{k-1}-x^\star)$ and use $ \|a-b\|_2^2 \le 2\|a\|_2^2 + 2\|b\|_2^2 $ to obtain
\[
\|y_k-x^\star\|_2^2 \le 2(1+\beta)^2\|x_k-x^\star\|_2^2 + 2\beta^2\|x_{k-1}-x^\star\|_2^2.
\]
Substitute the bounds on $\|x_k-x^\star\|_2^2$ and $\|x_{k-1}-x^\star\|_2^2$.
\end{proof}

\begin{proof}\linkofproof{lem:coord_jump}
Fix $i\in A(y)\subseteq I^\star$. Then $x_i^\star=0$, and we have
\begin{align*}
\begin{aligned}
    \abs{u(y)_i-u(x^\star)_i} \circled{1}[\ge] \absl{\abs{u(y)_i}-\abs{u(x^\star)_i}}
    \geq |u(y)_i|-|u(x^\star)_i|
    \circled{2}[>] \eta \lambda_i - \eta (\lambda_i-\gamma_i\sqrt{d_i})
    = \eta \gamma_i\sqrt{d_i}.
\end{aligned}
\end{align*}
    where we used the reverse triangle inequality in $\circled{1}$. In $\circled{2}$ we used that $i \in A(y)$ means $\prox_{\eta g}(u(y))_i \neq 0$ and so $|u(y)_i| > \eta \lambda_i$. And also that by definition, we have $\abs{u(x^\star)_i} = \eta\abs{\nabla f(x^\star)_i} = \eta(\lambda_i - \gamma_i \sqrt{d_i})$ by the definition of $\gamma_i$.
\end{proof}

The following \Cref{lem:path_monotone} is proved, for example, as Lemma~4 in \citet{ha2021statistical} (see also the discussion of regularization paths in \citet{fountoulakis2019variational}).

\begin{lemma}[Monotonicity of the $\ell_1$-regularized PageRank path \citep{ha2021statistical} ]\label{lem:path_monotone}
    For the family \cref{eq:reg_personalized_pagerank}, let $x^\star(\rho) \defi \argmin_{x} F_\rho(x)$, for any $\rho > 0$.  The solution path is monotone: if $\rho'>\rho \geq 0$, then
\begin{equation*}
x^\star(\rho')\le x^\star(\rho)\quad\text{coordinatewise.}
\end{equation*}
\end{lemma}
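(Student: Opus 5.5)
The plan is to compare the ISTA (proximal-gradient) iterations for the two regularization levels, started from the same point, and to pass to the limit. The key structural fact is that, with step size $\eta = 1$, the forward map is order-preserving on the nonnegative orthant. Write $b \defi \alpha D^{-1/2}s \ge 0$. Then $u(x) = x - \nabla f(x) = (I-Q)x + b$, and
\[
I - Q = (1-\alpha)I - \tfrac{1-\alpha}{2}\lapl = \tfrac{1-\alpha}{2}\bigl(I + D^{-1/2}AD^{-1/2}\bigr).
\]
Since $\alpha\in(0,1]$, this matrix is entrywise nonnegative: its diagonal is $\frac{1-\alpha}{2}\ge 0$ and its off-diagonal entries are $\frac{1-\alpha}{2}A_{ij}/\sqrt{d_id_j}\ge 0$. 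Hence $x\le x'$ coordinatewise implies $u(x)\le u(x')$, and $x \ge 0$ implies $u(x)\ge b\ge 0$.

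Define the ISTA map $T_\rho(x) \defi \prox_{g_\rho}(u(x))$. On inputs $x\ge 0$ the argument $u(x)$ is nonnegative, so the soft-threshold in \cref{eq:prox_in_fista} reduces to
\[
T_\rho(x)_i = \max\{u(x)_i - \alpha\rho\sqrt{d_i},\,0\}.
\]
This formula shows three things. First, $T_\rho$ maps the nonnegative orthant into itself. Second, $T_\rho$ is monotone in $x$, because it composes the order-preserving affine map $u$ with coordinatewise nondecreasing scalar maps. Third, $T_\rho(x)$ is nonincreasing in $\rho$ for fixed $x$. Combining the last two, for $\rho'>\rho$ and $0\le x\le x'$ we get
\[
T_{\rho'}(x)\le T_{\rho}(x) \le T_\rho(x').
\]

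Now set $x^0 = z^0 = 0$, $x^{k+1} = T_\rho(x^k)$ and $z^{k+1} = T_{\rho'}(z^k)$. By induction on $k$, using the chained inequality above with $x = z^k$ and $x' = x^k$, we obtain $0 \le z^k\le x^k$ for all $k$. ISTA with step $1/L$ on the $\alpha$-strongly convex problem $F_\rho$ converges to the unique minimizer; this is the standard linear rate of proximal gradient, or \cref{fact:fista_rate} specialized to $\beta = 0$ together with strong convexity. Hence $x^k\to x^\star(\rho)$ and $z^k\to x^\star(\rho')$. Since the closed order $\{(z,x): z\le x\}$ is preserved under limits, we conclude $x^\star(\rho')\le x^\star(\rho)$. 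The case $\rho = 0$ is identical, with $T_0(x)=\max\{u(x),0\}$, which coincides with $u(x)$ on the orthant because $u(x) \ge 0$ there.

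The only delicate step is the entrywise nonnegativity of $I-Q$ for step size $\eta=1$. This is what makes the forward map order-preserving; it fails for larger steps. It also relies on the sign structure $s\ge 0$, which keeps every iterate in the orthant so that the sign term in the prox can be dropped. An alternative route, if one prefers to avoid iterations, is to view the problem as a linear complementarity problem with the Z-matrix $Q$. Its solution is the least element of $\{x\ge 0: Qx - b + \alpha\rho D^{1/2}\ones \ge 0\}$, and this set grows as $\rho$ increases, so its least element decreases. I would present the iterative argument as the main proof, since it uses only facts already available in the paper.
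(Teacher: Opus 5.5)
Your proof is correct. It cannot follow the paper's argument, because the paper gives no proof of its own here: it imports the statement as Lemma~4 of \citep{ha2021statistical}.

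Your route is a self-contained comparison argument. Its engine is the paper's own \cref{lem:prox_grad_monotonicity}: entrywise nonnegativity of $I-Q$ makes the prox-gradient map order-preserving. You supplement this with two facts the paper does not spell out. First, on the nonnegative orthant the soft-threshold is nonincreasing in $\rho$. Second, ISTA started at $0$ never leaves the orthant, so the sign factor in \eqref{eq:prox_in_fista} can be dropped. Running both ISTA sequences from $0$, inducting, and passing to the limit closes the argument, and it also yields the iterate-wise domination $z^k\le x^k$ for every $k$.

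The two routes buy different things. Citing, or using a static argument on the optimality conditions such as your LCP least-element remark, keeps the appendix short and needs no convergence result for an algorithm. Your iterative argument makes the paper self-contained, needing only \cref{lem:prox_grad_monotonicity} and the ISTA linear rate that the paper already invokes in the proof of \cref{prop:lower_bound}.

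You can shorten it further. Since $x^\star(\rho')\ge 0$ is a fixed point of $T_{\rho'}$, you have $T_\rho(x^\star(\rho'))\ge T_{\rho'}(x^\star(\rho'))=x^\star(\rho')$. Iterating $T_\rho$ from $x^\star(\rho')$ therefore gives a nondecreasing sequence that converges to $x^\star(\rho)$.

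One small correction: \cref{fact:fista_rate} cannot be ``specialized to $\beta=0$''. Its momentum is fixed at $\beta=(\sqrt{L/\mu}-1)/(\sqrt{L/\mu}+1)$, which is positive for $\alpha<1$. Cite the standard linear rate of proximal gradient for strongly convex composite objectives instead.
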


\begin{lemma}[Monotonicity of proximal gradient steps for PageRank]\label{lem:prox_grad_monotonicity}
If $z \geq z' \geq 0$, then
\[
\prox_{g_c}\!\left(z - \frac{1}{L} \nabla f(z)\right)
\;\geq\;
\prox_{g_c}\!\left(z' - \frac{1}{L}\nabla f(z')\right)
\qquad \text{(componentwise).}
\]
\end{lemma}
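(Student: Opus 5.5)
The claim is monotonicity of the map $T(z)\defi \prox_{g_c}(z-\frac1L\nabla f(z))$ on the nonnegative orthant. Here $g_c(x)=c\,\alpha\rho\|D^{1/2}x\|_1$ for some $c>0$, and the prox is taken with step $1/L$, as in \cref{eq:prox_in_fista}. I will factor $T$ as a forward map followed by the prox and show that each factor is order-preserving (componentwise).

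\textbf{Step 1: the forward step $z\mapsto u(z)=z-\frac1L\nabla f(z)$.} We have $\nabla f(z)=Qz-\alpha D^{-1/2}s$, so
\[
u(z)=\bigl(I-\tfrac1L Q\bigr)z+\tfrac{\alpha}{L}D^{-1/2}s .
\]
It suffices to show that $I-\frac1L Q$ is entrywise nonnegative, since then $z\ge z'$ gives $u(z)-u(z')=(I-\frac1LQ)(z-z')\ge 0$. Write
\[
Q=\alpha I+\tfrac{1-\alpha}{2}\bigl(I-D^{-1/2}AD^{-1/2}\bigr)=\tfrac{1+\alpha}{2}I-\tfrac{1-\alpha}{2}D^{-1/2}AD^{-1/2}.
\]
The off-diagonal entries of $I-\frac1LQ$ are $\frac{1-\alpha}{2L}(D^{-1/2}AD^{-1/2})_{ij}\ge 0$ because $\alpha\le 1$. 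The diagonal entries are $1-\frac{1+\alpha}{2L}$, using $A_{ii}=0$ (no self-loops). With $L=1$ this equals $\frac{1-\alpha}{2}\ge 0$. More generally it is nonnegative whenever $L\ge \frac{1+\alpha}{2}$, which holds for any valid smoothness constant since $L\ge\lambda_{\max}(Q)\ge \max_i Q_{ii}=\frac{1+\alpha}{2}$. This step is the only place where the graph structure is used.

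\textbf{Step 2: the prox.} The prox of $g_c$ with step $1/L$ is separable soft-thresholding,
\[
[\prox(w)]_i=\operatorname{sign}(w_i)\max\{|w_i|-\tau_i,0\},\qquad \tau_i=\tfrac{c\alpha\rho\sqrt{d_i}}{L}\ge0 .
\]
For each $i$, the scalar map $t\mapsto \operatorname{sign}(t)\max\{|t|-\tau_i,0\}$ is nondecreasing on all of $\R$: it is piecewise linear with slopes $1,0,1$ and is continuous. Hence $w\ge w'$ componentwise implies $\prox(w)\ge\prox(w')$ componentwise. Composing with Step 1 gives $T(z)\ge T(z')$.

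The hypothesis $z'\ge 0$ is not actually needed for the inequality itself. I would remark that it keeps the iterates in the orthant: since $u(0)=\frac{\alpha}{L}D^{-1/2}s\ge0$, Step 1 gives $u(z)\ge 0$ for $z\ge0$, so $T(z)\ge0$. I expect no real obstacle. The only care points are checking the sign of the diagonal of $I-\frac1LQ$, and stating the step-size convention for $\prox_{g_c}$ consistently with \cref{eq:prox_in_fista}.
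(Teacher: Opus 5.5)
Your proof is correct and follows the paper's argument. The forward map $u(z)=(I-\tfrac{1}{L}Q)z+\tfrac{1}{L}b$ is order-preserving because $I-\tfrac{1}{L}Q$ is entrywise nonnegative ($Q$ has nonpositive off-diagonal entries and $Q_{ii}\le L$), weighted soft-thresholding is coordinatewise nondecreasing, and so the composition is monotone; your explicit diagonal check and your remark that $z'\ge 0$ is not needed are both accurate.
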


\begin{proof}
Using the definition of the forward-gradient map $u(z)\defi z-\frac{1}{L}\nabla f(z)$, and that, for PageRank, $\nabla f(z)=Qz-b$ with $b=\alpha D^{-1/2}s$, we have
\[
u(z) = z-\frac{1}{L}(Qz-b) = \left(I-\frac{1}{L}Q\right)z + \frac{1}{L}b.
\]
Since $Q$ is an $M$-matrix (positive semidefinite and off-diagonal entries are nonpositive) and $Q\preccurlyeq LI$ (by $L$-smoothness),
the matrix $I-\frac{1}{L}Q$ is entrywise nonnegative. Therefore $u(\cdot)$ is monotone componentwise:
$z\ge z'\Rightarrow u(z)\ge u(z')$.

Next, for $g_c(x)=c\alpha\|D^{1/2}x\|_1$, the proximal map is separable and monotone componentwise, since
\[
\bigl(\prox_{g_c}(w)\bigr)_i=\operatorname{sign}(w_i)\max\{|w_i|-c\alpha\sqrt{d_i},0\}.
\]
Composing these two monotone maps yields the claim.
\end{proof}

\begin{proof}\linkofproof{lem:two_tier_split}
Fix any $i\in I_B^{\rm small}\subseteq I_B$. Then $x_{B,i}^\star=0$.
Recall that $u(z)\defi z-\nabla f(z)$ (here $\eta=1$).

By the definition \cref{def:margin} of the (B)-margin at coordinate $i$ and the fact that $x_{B,i}^\star=0$,
we have
\begin{align*}
\gamma^{(B)}_i < \rho\alpha
&\Longleftrightarrow
2\rho\alpha + \frac{\nabla_i f(x_B^\star)}{\sqrt{d_i}} < \rho\alpha \\
&\Longleftrightarrow
\frac{\nabla_i f(x_B^\star)}{\sqrt{d_i}} < -\,\rho\alpha \\
&\Longleftrightarrow
\nabla_i f(x_B^\star) < -\,\rho\alpha\sqrt{d_i} \\
&\Longleftrightarrow
-\nabla_i f(x_B^\star) > \rho\alpha\sqrt{d_i} \\
&\Longleftrightarrow
u(x_B^\star)_i > \rho\alpha\sqrt{d_i}.
\end{align*}
Therefore, applying the coordinate formula for the prox of $g_A$ (cf.\ \cref{eq:prox_in_fista}) gives
\[
\bigl(\prox_{g_A}(u(x_B^\star))\bigr)_i
=
\operatorname{sign}(u(x_B^\star)_i)\max\{|u(x_B^\star)_i|-\rho\alpha\sqrt{d_i},\,0\}
>0.
\]
Next, since $\rho_B=2\rho>\rho_A=\rho$, path monotonicity \cref{lem:path_monotone} yields
$x_A^\star \ge x_B^\star$ componentwise.
Applying \cref{lem:prox_grad_monotonicity} with $c=1$ (i.e., $g_A$), $z=x_A^\star$, and $z'=x_B^\star$, we obtain
\[
\prox_{g_A}(u(x_A^\star)) \;\ge\; \prox_{g_A}(u(x_B^\star))
\qquad\text{(componentwise).}
\]
Finally, $x_A^\star$ is a fixed point of the proximal-gradient map for the (A) problem, so
$x_A^\star=\prox_{g_A}(u(x_A^\star))$. Hence
\[
x_{A,i}^\star \;=\; \bigl(\prox_{g_A}(u(x_A^\star))\bigr)_i
\;\ge\; \bigl(\prox_{g_A}(u(x_B^\star))\bigr)_i
\;>\;0,
\]
which implies $i\in \supp(x_A^\star)=S_A$. Therefore $I_B^{\rm small}\subseteq S_A$.
\end{proof}

\begin{proof}\linkofproof{thm:double_reg_work}
Recall that $\widetilde{A}_k=\supp(x_{k+1})\cap S_A^c$ and that we assume $\widetilde{A}_k\subseteq\mathcal{B}$ for all $k\ge0$.
By \cref{lem:two_tier_split}, any index in $\widetilde{A}_k$ is (B)-inactive with margin at least $\rho\alpha$; concretely,
    $\gamma_i^{(B)}\ge \rho\alpha$ for all $i\in\widetilde{A}_k$. Let $\mathcal{B}'$ be the subset of $\mathcal{B}$ such that $\gamma_i^{(B)} \geq \rho \alpha$ for all $i \in \mathcal{B}'$. Thus, $\widetilde{A}_k \subseteq \mathcal{B}'$.

We first bound the total spurious work:
\begin{align}\label{eq:spurious_work}
\begin{aligned}
\sum_{k=0}^{\infty}\sum_{i\in\widetilde{A}_k} d_i
&=
\sum_{k=0}^{\infty}\sum_{i\in\widetilde{A}_k}
\left(\frac{\sqrt{d_i}}{\gamma_i^{(B)}}\right)\left(\gamma_i^{(B)}\sqrt{d_i}\right) \\
&\le
\sum_{k=0}^{\infty}
\sqrt{\sum_{i\in\mathcal{B}'}\frac{d_i}{(\gamma_i^{(B)})^2}}
\;
\sqrt{\sum_{i\in\widetilde{A}_k}(\gamma_i^{(B)})^2 d_i}
\qquad \text{(Cauchy-Schwarz, and } \widetilde{A}_k\subseteq\mathcal{B}')\\
&\le
\sum_{k=0}^{\infty}
\sqrt{\sum_{i\in\mathcal{B}' }\frac{d_i}{\rho^2\alpha^2}}
\;
\sqrt{\sum_{i\in\widetilde{A}_k} |u(y_k)_i-u(x_B^\star)_i|^2}
\qquad \text{(since } \gamma_i^{(B)}\ge\rho\alpha,\ \cref{lem:coord_jump})\\
&\le
\sum_{k=0}^{\infty}
\frac{\sqrt{\vol(\mathcal{B})}}{\rho\alpha}\;
    \|u(y_k)-u(x_B^\star)\|_2 \qquad (\text{because } \mathcal{B}' \subseteq \mathcal{B})\\
&\le
\sum_{k=0}^{\infty}
    \frac{\sqrt{\vol(\mathcal{B})}}{\rho\alpha}\; 
\left(\|y_k-x_B^\star\|_2+\eta\|\nabla f(y_k)-\nabla f(x_B^\star)\|_2\right) \\
&\le
\sum_{k=0}^{\infty}
\frac{\sqrt{\vol(\mathcal{B})}}{\rho\alpha}\;
    (1+\eta L)\|y_k-x_B^\star\|_2. \qquad \text{(by smoothness)}
\end{aligned}
\end{align}
For RPPR we use $\eta=1$ and $L=1$, hence $(1+\eta L)=2$.
Using \cref{lem:yk_distance} and writing $q\defi 1-\sqrt{\mu/L}=1-\sqrt{\alpha}$, we have
$\|y_k-x_B^\star\|_2\le O(1)\,q^{k/2}$, so the series in \eqref{eq:spurious_work} sums to $O((1-q)^{-1})=O(\alpha^{-1/2})$.
Therefore,
\[
\sum_{k=0}^{\infty}\vol(\widetilde{A}_k)
\;=\;
O\!\left(\frac{\sqrt{\vol(\mathcal{B})}}{\rho\alpha^{3/2}}\right).
\]

Next, we bound the core work. By the sparsity guarantee for RPPR \citep[Theorem~2]{fountoulakis2019variational},
$\vol(S_A)\le 1/\rho$. Thus, after $N$ iterations, the total work is bounded by
\[
\mathrm{Work}(N)
=
O\!\left(N\vol(S_A) + \sum_{k=0}^{N-1}\vol(\widetilde{A}_k)\right)
=
O\!\left(\frac{N}{\rho} + \frac{\sqrt{\vol(\mathcal{B})}}{\rho\alpha^{3/2}}\right).
\]
Finally, by \cref{fact:fista_rate}, to ensure $F_B(x_N)-F_B(x_B^\star)\le \varepsilon$ it suffices to take
\[
N \ge N_\varepsilon \defi
\left\lceil \frac{\log(\Delta_0/\varepsilon)}{\log\!\left(1/(1-\sqrt{\mu/L})\right)}\right\rceil
=
O\!\left(\frac{1}{\sqrt{\alpha}}\log\!\left(\frac{\alpha}{\varepsilon}\right)\right),
\]
using $L=1$, $\mu=\alpha$, and $\Delta_0\le \alpha/2$ from \cref{sec:preliminaries}.
Substituting $N=N_\varepsilon$ yields the stated bound.
\end{proof}

\begin{proof}\linkofproof{thm:boundary_confinement_exposure}
We prove by induction that $\supp(x_k)\subseteq S\cup \bdry S$ for all $k\ge 0$. The base case is trivial, since $x_0=0$, so $\supp(x_0)=\emptyset\subseteq S\cup \bdry S$.

Now assume $\supp(x_{k-1})\subseteq S\cup \bdry S$ and $\supp(x_k)\subseteq S\cup \bdry S$ for some $k\ge 0$
(with $x_{-1}=x_0=0$ covering $k=0$).
Then $\supp(y_k)\subseteq \supp(x_k)\cup\supp(x_{k-1})\subseteq S\cup \bdry S$.

Fix any $i\in \mathrm{Ext}(S)$.
Since $i\notin S\cup\bdry S$ and $\supp(y_k)\subseteq S\cup\bdry S$, we have $y_{k,i}=0$.
Also $i\neq v$ (the seed lies in $S$), hence $(D^{-1/2}s)_i=0$.

For RPPR, $\nabla f(y)=Qy-\alpha D^{-1/2}s$, so with $\eta=1$ we have
\[
u(y)=y-\nabla f(y)=(I-Q)y+\alpha D^{-1/2}s.
\]
Using $Q=\frac{1+\alpha}{2}I-\frac{1-\alpha}{2}D^{-1/2}AD^{-1/2}$, we get
$(I-Q)=\frac{1-\alpha}{2}\left(I+D^{-1/2}AD^{-1/2}\right)$.
Therefore, for our fixed $i\in\mathrm{Ext}(S)$,
\[
u(y_k)_i
=
\frac{1-\alpha}{2}\sum_{j\sim i}\frac{y_{k,j}}{\sqrt{d_i d_j}}
=
\frac{1-\alpha}{2}\sum_{j\in \N(\{i\})\cap \bdry S}\frac{y_{k,j}}{\sqrt{d_i d_j}},
\]
because $i$ has no neighbors in $S$ (by definition of $\mathrm{Ext}(S)$) and $y_k$ has no support outside $S\cup\bdry S$. Taking absolute values and using $d_j\ge d_{\min\bdry S}$ for $j\in\bdry S$ gives
\begin{align*}\label{eq:u_bound}
\begin{aligned}
|u(y_k)_i|
&\le
\frac{1-\alpha}{2\sqrt{d_i}}
\sum_{j\in \N(\{i\})\cap \bdry S}\frac{|y_{k,j}|}{\sqrt{d_j}}
    \circled{1}[\le]
\frac{1-\alpha}{2\sqrt{d_i}}
\cdot
\frac{\sqrt{|\N(\{i\})\cap \bdry S|}}{\sqrt{d_{\min\bdry S}}}
\cdot
\|y_k\|_2 \\
    & \circled{2}[\leq] \frac{\alpha}{4}\rho\sqrt{d_i} (\norm{y_k-x^\star}_2 + \norm{x^\star}_2) \circled{3}[\leq] 2\alpha\rho\sqrt{d_i}.
\end{aligned}
\end{align*}

where $\circled{1}$ uses Cauchy-Schwarz and $\circled{2}$ uses the triangular inequality and our no-percolation assumption \cref{eq:exposure_assumption}. Finally $\circled{3}$ uses the bound $\norm{y_k-x^\star}_2 \leq \sqrt{20}$ from \cref{lem:yk_distance} and $\norm{x^\star}_2 \leq 1$ from the preliminaries \cref{sec:preliminaries} and bound the resulting constants to an integer number.

For the (B) problem, the shrinkage threshold is $\lambda_i=2\alpha\rho\sqrt{d_i}$, so we showed 
$|u(y_k)_i|\le \lambda_i$ and thus the proximal update keeps $x_{k+1,i}=0$ (cf.\ the coordinate formula \cref{eq:prox_in_fista}).
Since this holds for every $i\in\mathrm{Ext}(S)$, we conclude $\supp(x_{k+1})\subseteq S\cup\bdry S$. This completes the induction.
\end{proof}

\section{High-degree nodes do not activate}
\label{sec:fista_degree_nonactivation}
We provide an extra property of \cref{eq:apg_fista}. Nodes of high-enough degree can never be activated by the accelerated iterates when we over-regularize. The proof argues that \cref{eq:apg_fista} on problem (B) with a large margin prevents high-degree nodes, and large margins are guaranteed for coordinates outside $S_A$.

\begin{proposition}[Large-degree nodes do not activate]\label{lem:fista_degree_cutoff}%
    Run \Cref{eq:apg_fista} on problem (B) $F_{2\rho}$, and let $R$ be any uniform bound such that $\|y_k-x_B^\star\|_2\le R$ for all $k\ge 0$.
    Let $i$ be such that the minimizer of problem (A) $F_{\rho}$ satisfies $x_{A,i}^\star=0$.
If
\begin{equation*}\label{eq:rho_gap_condition_degree}
    d_i \ge  \left(\frac{LR}{\alpha\rho} \right)^2,
\end{equation*}
then \Cref{eq:apg_fista} does not activate node $i$, that is $x_{k,i}=y_{k,i}=0$ for all $k\ge 0$.
\end{proposition}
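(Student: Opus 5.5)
The plan is an induction on $k$ that combines the margin lower bound of \cref{lem:two_tier_split} with the jump inequality of \cref{lem:coord_jump}. The induction claim is $x_{k,i}=0$ for all $k\ge 0$.

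\emph{Step 1: node $i$ has a uniform (B)-margin.} By \cref{lem:path_monotone}, $0\le x_B^\star\le x_A^\star$ coordinatewise, so $x_{A,i}^\star=0$ gives $x_{B,i}^\star=0$, i.e.\ $i\in I_B$. Since $i\notin S_A$, the contrapositive of \cref{lem:two_tier_split} ($I_B^{\rm small}\subseteq S_A$) places $i$ in $I_B^{\rm large}$. Hence $\gamma_i=\gamma_i^{(B)}\ge \rho\alpha$.

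\emph{Step 2: induction.} The base case holds because $x_{-1}=x_0=0$. Assume $x_{k,i}=x_{k-1,i}=0$; then $y_{k,i}=0$ as well. Suppose for contradiction that $x_{k+1,i}\neq 0$. Then $i\in A(y_k)$, and \cref{lem:coord_jump} gives
\[
|u(y_k)_i-u(x_B^\star)_i| \;>\; \eta\gamma_i\sqrt{d_i}\;\ge\; \eta\rho\alpha\sqrt{d_i}.
\]

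\emph{Step 3: upper bound on the same quantity.} For PageRank, $u(y)-u(x_B^\star)=(I-\eta Q)(y-x_B^\star)$ with $\eta=1/L$. Since $\alpha I\preccurlyeq Q\preccurlyeq LI$, we have $0\preccurlyeq I-\eta Q\preccurlyeq I$, so
\[
|u(y_k)_i-u(x_B^\star)_i|\le \|I-\eta Q\|_2\,\|y_k-x_B^\star\|_2\le R.
\]
(If only the cruder smoothness estimate is available, one gets $(1+\eta L)R$, which changes the threshold by a constant.) Combining with Step 2 gives $\eta\rho\alpha\sqrt{d_i}<R$, that is, $\sqrt{d_i}<LR/(\alpha\rho)$. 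This contradicts the hypothesis $d_i\ge (LR/(\alpha\rho))^2$. Therefore $x_{k+1,i}=0$, which closes the induction, and $y_{k,i}=0$ for all $k$ follows.

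\emph{Main obstacle.} The delicate point is matching constants in Step 3 so that the threshold is exactly $(LR/(\alpha\rho))^2$. This depends on using the contraction $\|I-\eta Q\|\le 1$ rather than the triangle-inequality bound $1+\eta L$. The strict inequality in \cref{lem:coord_jump} is what makes the non-strict hypothesis on $d_i$ suffice.
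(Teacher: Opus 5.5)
Your argument is correct and gives exactly the threshold $(LR/(\alpha\rho))^2$, but it is assembled differently from the paper's proof.

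The paper uses neither \cref{lem:two_tier_split} nor \cref{lem:coord_jump}. It obtains the margin directly from $\Delta x \defi x_A^\star - x_B^\star \ge 0$, $\Delta x_i = 0$, and the nonpositive off-diagonal entries of $Q$. These give $\nabla_i f(x_B^\star)-\nabla_i f(x_A^\star) = -\sum_{j\neq i}Q_{ij}\Delta x_j\ge 0$, hence $\abs{\nabla_i f(x_B^\star)}\le\abs{\nabla_i f(x_A^\star)}\le\alpha\rho\sqrt{d_i}$, which is the same fact as your $\gamma_i^{(B)}\ge\rho\alpha$. It then keeps $y_{k,i}=0$ by induction and uses $L$-smoothness to get $\abs{\nabla_i f(y_k)}\le\alpha\rho\sqrt{d_i}+LR\le 2\alpha\rho\sqrt{d_i}$. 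The soft-threshold then kills the coordinate, for any step size.

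Your version is more modular: it makes the proposition a corollary of the two lemmas behind \cref{thm:double_reg_work}. Also, since $\norm{I-\eta Q}_2\le 1$ bounds $\abs{u(y_k)_i-u(x_B^\star)_i}$ for arbitrary $y_k$, your induction hypothesis is never actually used. Each step's non-activation follows from $\norm{y_k-x_B^\star}_2\le R$ alone. The cost is that you lean on the quadratic structure and on $\eta=1/L$, which is harmless here.

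The ``main obstacle'' you describe does not really arise. Under your induction hypothesis $y_{k,i}=x_{B,i}^\star=0$, so $\abs{u(y_k)_i-u(x_B^\star)_i}=\eta\abs{\nabla_i f(y_k)-\nabla_i f(x_B^\star)}\le\eta LR$ by smoothness alone. There is no $(1+\eta L)$ loss, and this is exactly the paper's estimate.
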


For our PageRank problem \cref{eq:reg_personalized_pagerank}, we have $L \leq 1$, and \Cref{lem:yk_distance} allows us to take $R\le \sqrt{20}$.
Therefore nodes satisfying $d_i \geq 20\alpha^{-2} \rho^{-2}$ will not get activated.

\begin{proof}
Fix $i$ such that $x_{A,i}^\star=0$.
By path monotonicity \Cref{lem:path_monotone} and nonnegativity of the minimizers,
\[
\Delta x \defi x_A^\star-x_B^\star \ge 0
\qquad\text{and}\qquad
\Delta x_i = 0.
\]

Recall $\nabla f(x)=Qx-b$, where $b\ge 0$ and $Q_{ij}\le 0$ for $i\neq j$.
Since $x_A^\star,x_B^\star\ge 0$ and $x_{A,i}^\star=x_{B,i}^\star=0$, we have
\[
\nabla_i f(x_A^\star)\le 0,
\qquad
\nabla_i f(x_B^\star)\le 0,
\]
and
\[
\nabla_i f(x_B^\star)-\nabla_i f(x_A^\star)
=
-(Q\Delta x)_i
=
-\sum_{j\neq i} Q_{ij}\Delta x_j
\ge 0.
\]
Hence
\[
|\nabla_i f(x_B^\star)|
\le
    |\nabla_i f(x_A^\star)| \leq \alpha \rho \sqrt{d_i},
\]
where the last inequality holds by the KKT conditions for problem (A) ($\rho$-regularization),

We now prove $x_{k,i}=0$ for all $k$ by induction.
The base case holds since $x_{-1}=x_0=0$.
Assume $x_{k,i}=x_{k-1,i}=0$.
Then $y_{k,i}=0$ and by $L$-smoothness,
\[
|\nabla_i f(y_k)|
\le
|\nabla_i f(x_B^\star)|
+
\|\nabla f(y_k)-\nabla f(x_B^\star)\|_2
\le
\alpha\rho\sqrt{d_i}
+
L\|y_k-x_B^\star\|_2.
\]
Using $\|y_k-x_B^\star\|_2\le R$ and
\(
d_i \ge \left(\frac{LR}{\alpha\rho}\right)^2
\),
we obtain
\begin{equation}\label{eq:compact_grad_yk}
|\nabla_i f(y_k)|
\le
2\alpha\rho\sqrt{d_i}.
\end{equation}

Let $w_k=y_k-\eta\nabla f(y_k)$.
Since $y_{k,i}=0$,
\(
|w_{k,i}|=\eta|\nabla_i f(y_k)|.
\)
The proximal map of $g(x)=\alpha\rho\|D^{1/2}x\|_1$ is weighted soft-thresholding, so by \eqref{eq:compact_grad_yk},
\[
x_{k+1,i}
=
\operatorname{sign}(w_{k,i})
\max\{ |w_{k,i}|-2\eta\alpha\rho\sqrt{d_i},0\}
=
0.
\]
This closes the induction and proves $x_{k,i}=0$ for all $k\ge 0$.
\end{proof}

\section{Bad instances where the margin \texorpdfstring{$\gamma$}{gamma} can be very small}
\label{sec:bad_instances}

We now record two explicit graph families showing that the degree-normalized
strict-complementarity margin (the one that naturally interfaces with our
degree-weighted work model in \Cref{eq:running_time}) can be made arbitrarily small
(and even $\gamma=0$) by choosing $\rho$ near a breakpoint of the regularization path where an inactive KKT inequality becomes tight. This motivates our theory for linking a problem (B) with the sparsity pattern of a slightly less regularized one (A), so that no requirement in the minimum margin is made (since we split the coordinates into low-margin ones which are included in the support of the (A) solution and then the high-margin ones). Concretely, let $x^\star$ be the minimizer and $I^\star \defi \{i:\;x_i^\star=0\}$.
Define the coordinatewise degree-normalized KKT slack
\[
\gamma_i \defi \frac{\lambda_i-|\nabla_i f(x^\star)|}{\sqrt{d_i}}
\qquad (i\in I^\star),
\qquad
\lambda_i \defi \rho\alpha \sqrt{d_i},
\]
and the global margin
\[
\gamma \defi \min_{i\in I^\star}\gamma_i.
\]

\subsection{Star graph (seed at the center)}

Let $G$ be a star on $m+1$ nodes with center node $c$ of degree $d_c=m$ and leaves $\ell$ of degree $1$.
Let the seed be $s=e_c$.

\begin{lemma}[Star graph breakpoint: $\gamma$ can be $0$]\label{lem:star_gamma_small}
Fix $\alpha\in(0,1)$ and $m\ge 1$ and define
\[
\rho_0 \defi \frac{1-\alpha}{2m}.
\]
For any $\rho\in[\rho_0,\,1/m)$, let
\[
x^\star \defi x_c^\star e_c,
\qquad
x_c^\star = \frac{2\alpha(1-\rho m)}{(1+\alpha)\sqrt{m}}.
\]
Then $x^\star$ is a minimizer of $F_\rho$, hence the unique minimizer by $\alpha$-strong convexity. In particular, $S^\star=\{c\}$.
Moreover, for any leaf $\ell$ (recall $d_\ell=1$), with $\lambda_\ell=\alpha\rho\sqrt{d_\ell}=\alpha\rho$,
the degree-normalized slack equals
\[
\gamma_\ell
\;\defi\;
\frac{\lambda_\ell-|\nabla_\ell f(x^\star)|}{\sqrt{d_\ell}}
\;=\;
\lambda_\ell-|\nabla_\ell f(x^\star)|
\;=\;
\frac{2\alpha}{1+\alpha}\,(\rho-\rho_0),
\]
and thus at the breakpoint $\rho=\rho_0$ one has $\gamma=0$.
\end{lemma}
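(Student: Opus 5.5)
The plan is to verify the KKT conditions \eqref{eq:KKT_coord_rho} directly for the proposed point $x^\star = x_c^\star e_c$. Since $F_\rho$ is convex, these conditions are sufficient for optimality, and $\alpha$-strong convexity then gives uniqueness. The slack formula will fall out of the same gradient computation at the leaves.

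\textbf{Step 1: write out $Q$ and $b$ on the star.} We have $d_c=m$ and $d_\ell=1$, and $A$ has entries $1$ exactly on center--leaf pairs. Hence $(D^{-1/2}AD^{-1/2})_{c\ell} = 1/\sqrt m$, and its diagonal is zero. This gives
\[
Q_{cc}=Q_{\ell\ell}=\tfrac{1+\alpha}{2},
\qquad Q_{c\ell}=-\tfrac{1-\alpha}{2\sqrt m},
\]
and $b=\alpha D^{-1/2}e_c = \tfrac{\alpha}{\sqrt m}e_c$. Since $x^\star$ is supported on $c$ only, $\nabla f(x^\star)=Qx^\star-b$ has center coordinate $\tfrac{1+\alpha}{2}x_c^\star-\tfrac{\alpha}{\sqrt m}$ and leaf coordinates $-\tfrac{1-\alpha}{2\sqrt m}x_c^\star$.

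\textbf{Step 2: check the KKT conditions.}

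For the center, substitute $x_c^\star = \frac{2\alpha(1-\rho m)}{(1+\alpha)\sqrt m}$. This gives
\[
\nabla_c f(x^\star)=\tfrac{\alpha(1-\rho m)}{\sqrt m}-\tfrac{\alpha}{\sqrt m}=-\alpha\rho\sqrt m = -\alpha\rho\sqrt{d_c},
\]
which is exactly the active-coordinate condition. It applies because $x_c^\star>0$, and that holds precisely when $\rho<1/m$.

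For each leaf, $\nabla_\ell f(x^\star) = -\frac{\alpha(1-\alpha)(1-\rho m)}{(1+\alpha)m}\le 0$. We need this to lie in $[-\alpha\rho,0]$.

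\textbf{Step 3: compute the slack.} With $\lambda_\ell=\alpha\rho$, we have
\[
\gamma_\ell=\alpha\rho-\tfrac{\alpha(1-\alpha)}{(1+\alpha)m}+\tfrac{\alpha(1-\alpha)\rho}{1+\alpha}.
\]
Collecting the $\rho$ terms gives $\tfrac{2\alpha}{1+\alpha}\rho$. Writing the constant as $\tfrac{2\alpha}{1+\alpha}\cdot\tfrac{1-\alpha}{2m}=\tfrac{2\alpha}{1+\alpha}\rho_0$ then yields
\[
\gamma_\ell=\tfrac{2\alpha}{1+\alpha}(\rho-\rho_0).
\]
This is nonnegative exactly when $\rho\ge\rho_0$, which closes the leaf KKT inequality. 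Therefore $x^\star$ is the minimizer and $S^\star=\{c\}$. All leaves are identical, so $\gamma=\gamma_\ell$, and this vanishes at $\rho=\rho_0$.

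\textbf{Main obstacle.} The only genuinely delicate point is bookkeeping the scaled Laplacian entries correctly, in particular the $1/\sqrt{d_cd_\ell}$ factor and the diagonal $\tfrac{1+\alpha}{2}$. Once those are right, the rest is algebra. I would also note explicitly that $\rho<1/m$ is needed for $x_c^\star>0$, while $\rho\ge\rho_0$ is needed for leaf feasibility. The interval $[\rho_0,1/m)$ is nonempty because $\rho_0=\tfrac{1-\alpha}{2m}<\tfrac1m$.
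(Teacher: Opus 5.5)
Your proposal is correct and takes essentially the same route as the paper. You compute the entries of $Q$ and $b$ on the star, verify the active KKT condition at $c$ (which needs $\rho<1/m$), and reduce the leaf inequality to nonnegativity of $\gamma_\ell=\frac{2\alpha}{1+\alpha}(\rho-\rho_0)$; the only cosmetic difference is that the paper derives $x_c^\star$ from the active condition and checks the leaf inequality before computing the slack, whereas you substitute $x_c^\star$ directly and read leaf feasibility off the sign of $\gamma_\ell$.
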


\begin{proof}
Recall that $f(x)=\frac12 x^\top Qx-\alpha\langle D^{-1/2}s,x\rangle$, hence
\[
\nabla f(x)=Qx-\alpha D^{-1/2}s,
\]
and
\[
Q=\alpha I+\frac{1-\alpha}{2}\bigl(I-D^{-1/2}AD^{-1/2}\bigr)
=\frac{1+\alpha}{2}I-\frac{1-\alpha}{2}D^{-1/2}AD^{-1/2}.
\]
On the star with seed $s=e_c$, we have $D^{-1/2}s=e_c/\sqrt{m}$, and for each leaf $\ell$,
\(
(D^{-1/2}AD^{-1/2})_{\ell c}=1/\sqrt{m}.
\)
Therefore
\[
Q_{cc}=\frac{1+\alpha}{2},
\qquad
Q_{\ell c}=Q_{c\ell}=-\frac{1-\alpha}{2\sqrt{m}}.
\]
Assume $x^\star=x_c^\star e_c$ with $x_c^\star>0$.
For the $\ell_1$-regularized PageRank objective
$F_\rho(x)=f(x)+\alpha\rho\|D^{1/2}x\|_1$,
the coordinatewise KKT conditions (cf.\ \Cref{eq:KKT_coord_rho}) give, at an active coordinate,
\(
\nabla_c f(x^\star)=-\alpha\rho\sqrt{d_c}=-\alpha\rho\sqrt{m}.
\)
Using $\nabla_c f(x^\star)=(Qx^\star)_c-\alpha/\sqrt{m}=Q_{cc}x_c^\star-\alpha/\sqrt{m}$, we obtain
\[
0=\nabla_c f(x^\star)+\alpha\rho\sqrt{m}
=
Q_{cc}x_c^\star-\frac{\alpha}{\sqrt{m}}+\alpha\rho\sqrt{m}
=
\frac{1+\alpha}{2}x_c^\star-\frac{\alpha}{\sqrt{m}}+\alpha\rho\sqrt{m},
\]
which yields
\[
x_c^\star=\frac{2\alpha(1-\rho m)}{(1+\alpha)\sqrt{m}},
\]
and this is positive exactly when $\rho<1/m$. Now fix any leaf $\ell$. Since $x_\ell^\star=0$, the KKT condition requires
\(
\nabla_\ell f(x^\star)\in[-\alpha\rho\sqrt{d_\ell},\,0]=[-\alpha\rho,\,0].
\)
Here
\[
\nabla_\ell f(x^\star)=(Qx^\star)_\ell
=
Q_{\ell c}x_c^\star
=
-\frac{1-\alpha}{2\sqrt{m}}\,x_c^\star \le 0,
\]
so the inactive condition is equivalent to
\[
|\nabla_\ell f(x^\star)|
=
\frac{1-\alpha}{2\sqrt{m}}\,x_c^\star
\le \alpha\rho.
\]
Substituting the expression for $x_c^\star$ and cancelling $\alpha>0$ gives
\[
\frac{1-\alpha}{2\sqrt{m}}\cdot \frac{2\alpha(1-\rho m)}{(1+\alpha)\sqrt{m}}
\le \alpha\rho
\quad\Longleftrightarrow\quad
\frac{(1-\alpha)(1-\rho m)}{(1+\alpha)m}\le \rho
\quad\Longleftrightarrow\quad
\rho\ge \frac{1-\alpha}{2m}=\rho_0.
\]
Hence for $\rho\in[\rho_0,1/m)$, the point $x^\star=x_c^\star e_c$ satisfies all KKT conditions.
Since $F_\rho$ is $\alpha$-strongly convex, these KKT conditions certify that $x^\star$ is the unique minimizer and
$S^\star=\{c\}$. Finally, for any leaf $\ell$ (with $d_\ell=1$) the degree-normalized slack is
\[
\gamma_\ell
=
\frac{\lambda_\ell-|\nabla_\ell f(x^\star)|}{\sqrt{d_\ell}}
=
\alpha\rho-\frac{1-\alpha}{2\sqrt{m}}\,x_c^\star
=
\alpha\rho-\frac{1-\alpha}{2\sqrt{m}}\cdot \frac{2\alpha(1-\rho m)}{(1+\alpha)\sqrt{m}}
=
\frac{2\alpha}{1+\alpha}\,(\rho-\rho_0),
\]
so at $\rho=\rho_0$ we indeed have $\gamma=0$.
\end{proof}

\subsection{Path graph (seed at an endpoint)}

Let $G=P_{m+1}$ be the path on nodes $1,2,\dots,m+1$ with edges $(i,i+1)$.
Let $s=e_1$ (seed at endpoint $1$).
Assume $m\ge 2$, so that $d_1=d_{m+1}=1$ and $d_i=2$ for $2\le i\le m$.
Consider candidates of the form $x=x_1 e_1$.

\begin{lemma}[Path graph breakpoint: $\gamma$ can be $0$]\label{lem:path_gamma_small}
Fix $\alpha\in(0,1)$ and $m\ge 2$ and define
\[
\rho_0 \defi  \frac{1-\alpha}{3+\alpha}.
\]
For any $\rho\in[\rho_0,\,1)$, let
\[
x^\star \defi x_1^\star e_1,
\qquad
x_1^\star = \frac{2\alpha(1-\rho)}{1+\alpha}.
\]
Then $x^\star$ is a minimizer of $F_\rho$, hence the unique minimizer by $\alpha$-strong convexity. In particular, $S^\star=\{1\}$.
Moreover, the \emph{degree-normalized} KKT slack at node $2$ (where $d_2=2$), with
$\lambda_2=\alpha\rho\sqrt{d_2}=\alpha\rho\sqrt{2}$, equals
\[
\gamma_2
\;\defi\;
\frac{\lambda_2-|\nabla_2 f(x^\star)|}{\sqrt{d_2}}
\;=\;
\frac{\alpha(3+\alpha)}{2(1+\alpha)}\,(\rho-\rho_0).
\]
In particular, at the breakpoint $\rho=\rho_0$ one has $\gamma=0$.
\end{lemma}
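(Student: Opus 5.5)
We will mirror the proof of \Cref{lem:star_gamma_small}: guess the one-sparse candidate $x^\star=x_1^\star e_1$, fix $x_1^\star$ from the active KKT equation at node $1$, and then check the inactive KKT inequalities at every other node. The needed entries of $Q$ come from writing
\[
Q=\tfrac{1+\alpha}{2}I-\tfrac{1-\alpha}{2}D^{-1/2}AD^{-1/2}.
\]
Since $d_1=1$ and $d_2=2$, we get $Q_{11}=\frac{1+\alpha}{2}$ and $Q_{21}=-\frac{1-\alpha}{2\sqrt2}$. Moreover $Q_{j1}=0$ for $j\ge3$, because node $1$ is adjacent only to node $2$. Finally, $D^{-1/2}s=e_1$, so the linear term contributes $-\alpha$ in coordinate $1$ only.

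\textbf{Active coordinate.} The KKT condition at node $1$ is $\nabla_1 f(x^\star)=-\alpha\rho$. This reads
\[
\tfrac{1+\alpha}{2}x_1^\star-\alpha=-\alpha\rho,
\]
so $x_1^\star=\frac{2\alpha(1-\rho)}{1+\alpha}$, which is positive exactly when $\rho<1$.

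\textbf{Inactive coordinates.} For $j\ge3$ we have $\nabla_j f(x^\star)=0$, which lies in $[-\alpha\rho\sqrt{d_j},0]$, so these conditions hold trivially. The only real constraint is at node $2$. There
\[
\nabla_2 f(x^\star)=Q_{21}x_1^\star=-\frac{1-\alpha}{2\sqrt2}\cdot\frac{2\alpha(1-\rho)}{1+\alpha}\le 0.
\]
The inactive KKT condition requires $|\nabla_2 f(x^\star)|\le \alpha\rho\sqrt2$. Multiplying through by $\sqrt2(1+\alpha)/\alpha$, this becomes $(1-\alpha)(1-\rho)\le 2(1+\alpha)\rho$, that is, $\rho\ge\frac{1-\alpha}{3+\alpha}=\rho_0$.

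\textbf{Conclusion.} For $\rho\in[\rho_0,1)$ all KKT conditions \eqref{eq:KKT_coord_rho} hold at $x^\star$. By $\alpha$-strong convexity of $F_\rho$, $x^\star$ is the unique minimizer, and $S^\star=\{1\}$.

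\textbf{Slack at node 2.} Dividing by $\sqrt{d_2}=\sqrt2$,
\[
\gamma_2=\alpha\rho-\frac{(1-\alpha)\alpha(1-\rho)}{2(1+\alpha)}
=\frac{\alpha}{2(1+\alpha)}\bigl[2(1+\alpha)\rho-(1-\alpha)+(1-\alpha)\rho\bigr]
=\frac{\alpha}{2(1+\alpha)}\bigl[(3+\alpha)\rho-(1-\alpha)\bigr],
\]
which equals $\frac{\alpha(3+\alpha)}{2(1+\alpha)}(\rho-\rho_0)$. Hence $\gamma_2=0$ at $\rho=\rho_0$. Because $\gamma\ge0$ always holds at the minimizer, this gives $\gamma=0$ there.

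\textbf{Main obstacle.} There is no conceptual difficulty; the argument is entirely bookkeeping. The points to get right are the $\sqrt{d_2}=\sqrt2$ factors, both in $Q_{21}$ and in the degree normalization of the slack. One should also note that $m\ge2$ is what guarantees $d_2=2$; otherwise node $2$ would be an endpoint of degree $1$.
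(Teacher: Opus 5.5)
Your proposal is correct and follows essentially the same route as the paper: you solve the active KKT equation at node $1$ for $x_1^\star$, check that the inactive conditions hold trivially for $j\ge 3$ and reduce to $\rho\ge\rho_0$ at node $2$, invoke $\alpha$-strong convexity for uniqueness, and compute $\gamma_2$ by direct algebra. You also note explicitly that all slacks are nonnegative at the minimizer, so $\gamma_2=0$ forces $\gamma=0$; the paper leaves this small point implicit.
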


\begin{proof}
Recall that $f(x)=\frac12 x^\top Qx-\alpha\langle D^{-1/2}s,x\rangle$, hence
\[
\nabla f(x)=Qx-\alpha D^{-1/2}s,
\qquad
Q=\frac{1+\alpha}{2}I-\frac{1-\alpha}{2}D^{-1/2}AD^{-1/2}.
\]
Since $s=e_1$ and $d_1=1$, we have $D^{-1/2}s=e_1$.
Also, for the edge $(1,2)$ we have $(D^{-1/2}AD^{-1/2})_{21}=1/\sqrt{d_2d_1}=1/\sqrt{2}$, so
\[
Q_{11}=\frac{1+\alpha}{2},
\qquad
Q_{21}=Q_{12}=-\frac{1-\alpha}{2\sqrt{2}}.
\]

Assume $x^\star=x_1^\star e_1$ with $x_1^\star>0$.
For the $\ell_1$-regularized PageRank objective $F_\rho(x)=f(x)+\alpha\rho\|D^{1/2}x\|_1$,
the active KKT condition at node $1$ is
\[
\nabla_1 f(x^\star)=-\alpha\rho\sqrt{d_1}=-\alpha\rho.
\]
But $\nabla_1 f(x^\star)=(Qx^\star)_1-\alpha=Q_{11}x_1^\star-\alpha$, hence
\[
0=\nabla_1 f(x^\star)+\alpha\rho
=
Q_{11}x_1^\star-\alpha+\alpha\rho
=
\frac{1+\alpha}{2}\,x_1^\star-\alpha+\alpha\rho,
\]
which yields
\[
x_1^\star=\frac{2\alpha(1-\rho)}{1+\alpha},
\]
and this is positive iff $\rho<1$. Now consider node $2$ (which is inactive under our candidate).
Since $(D^{-1/2}s)_2=0$ and $x^\star$ is supported only on node $1$,
\[
\nabla_2 f(x^\star)=(Qx^\star)_2
=
Q_{21}x_1^\star
=
-\frac{1-\alpha}{2\sqrt{2}}\,x_1^\star \le 0,
\]
so
\[
|\nabla_2 f(x^\star)|
=
\frac{1-\alpha}{2\sqrt{2}}\,x_1^\star.
\]
The inactive KKT condition at node $2$ requires
\(
|\nabla_2 f(x^\star)|\le \alpha\rho\sqrt{d_2}=\alpha\rho\sqrt{2}.
\)
Substituting $x_1^\star$ gives
\[
\frac{1-\alpha}{2\sqrt{2}}\cdot\frac{2\alpha(1-\rho)}{1+\alpha}
\le
\alpha\rho\sqrt{2}
\quad\Longleftrightarrow\quad
\frac{(1-\alpha)(1-\rho)}{1+\alpha}\le 2\rho
\quad\Longleftrightarrow\quad
\rho\ge \frac{1-\alpha}{3+\alpha}=\rho_0.
\]

For nodes $i\ge 3$, we have $(Qx^\star)_i=Q_{i1}x_1^\star=0$ because node $1$ is adjacent only to node $2$,
and also $(D^{-1/2}s)_i=0$, hence $\nabla_i f(x^\star)=0$, which satisfies the inactive KKT condition
$|\nabla_i f(x^\star)|\le \alpha\rho\sqrt{d_i}$.
Therefore, for any $\rho\in[\rho_0,1)$, the point $x^\star=x_1^\star e_1$ satisfies all KKT conditions.
Since $F_\rho$ is $\alpha$-strongly convex, this certifies that $x^\star$ is the unique minimizer and $S^\star=\{1\}$. Finally, the degree-normalized slack at node $2$ is
\begin{align*}
\gamma_2
&=
\frac{\lambda_2-|\nabla_2 f(x^\star)|}{\sqrt{d_2}}
=
\frac{\alpha\rho\sqrt{2}-\frac{1-\alpha}{2\sqrt{2}}x_1^\star}{\sqrt{2}}
=
\alpha\rho-\frac{1-\alpha}{4}x_1^\star\\
&=
\alpha\rho-\frac{1-\alpha}{4}\cdot\frac{2\alpha(1-\rho)}{1+\alpha}
=
\frac{\alpha}{2(1+\alpha)}\Bigl((3+\alpha)\rho-(1-\alpha)\Bigr)
=
\frac{\alpha(3+\alpha)}{2(1+\alpha)}\,(\rho-\rho_0),
\end{align*}
and at $\rho=\rho_0$ this slack is $0$, so $\gamma=0$.
\end{proof}

\section{FISTA can be worse than ISTA: a lower bound}
\label{sec:lower_bound}

We exhibit a family of star instances for which ISTA remains supported on the seed leaf and therefore has graph-size-independent work, whereas standard FISTA activates the high-degree center after two extrapolated steps, and incurs $\Omega(m)$ degree-weighted work, where $m+1$ is the number of nodes in the star graph. Consequently, for a fixed target accuracy depending only on $\alpha$, FISTA can be asymptotically worse than ISTA by a factor linear in the graph size. The proof is as follows: we identify the regularization level for which the center stays inactive at optimality, show that activation for FISTA reduces to a seed-coordinate condition, prove that FISTA satisfies the condition after two steps, and then show that the resulting cost is incurred before the target accuracy is reached.

\subsection{Construction}

Fix an integer $m\ge 2$.
Let $G(m)$ be the star graph on $n=m+1$ vertices with vertex set $\{w,v,u_1,\dots,u_{m-1}\}$, where $w$ is the center and $v,u_1,\dots,u_{m-1}$ are the leaves.
The edge set is $\bigl\{\{w,v\}\bigr\}\cup\bigl\{\{w,u_i\}:i=1,\dots,m-1\bigr\}$.
Thus every leaf is adjacent only to the center $w$, and there are no edges between distinct leaves.
In particular, the center has degree $d_w=m$, while each leaf has degree $1$, that is, $d_v=d_{u_i}=1$ for all $i=1,\dots,m-1$.
We distinguish $v$ as the seed node.
For the regularization regime in this section, the optimal solution is supported only on the seed leaf, so $S^\star=\{v\}$, $\bdry S^\star=\{w\}$, and $\mathrm{Ext}(S^\star)=\{u_1,\dots,u_{m-1}\}$.
Hence $\vol(S^\star)=1$ and $\vol(\bdry S^\star)=m$.

\subsection{Results}

The following lemma pins down the optimal solution and the critical regularization breakpoint.

\begin{lemma}\label{lem:star_leaf_breakpoint}
For the graph $G(m)$ with seed $s=e_v$ and any
$\rho\in[\rho_0,1)$ where
\[
\rho_0 \;\defi\; \frac{1-\alpha}{m(1+\alpha)+(1-\alpha)},
\]
let
\[
x^\star \defi x_v^\star e_v,
\qquad
x_v^\star = \frac{2\alpha(1-\rho)}{1+\alpha}.
\]
Then $x^\star$ is a minimizer of $F_\rho$ (cf.\ \cref{eq:reg_personalized_pagerank}), hence the unique minimizer by $\alpha$-strong convexity. In particular, $S^\star=\{v\}$. The degree-normalized complementarity margin at the center $w$ is
\[
\gamma_w
\;=\;
\frac{\alpha\bigl(m(1+\alpha)+(1-\alpha)\bigr)}{(1+\alpha)m}\,(\rho - \rho_0).
\]
In particular, at $\rho=\rho_0$ the margin is $\gamma_w=0$.
\end{lemma}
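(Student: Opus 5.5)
The argument follows the template of \Cref{lem:star_gamma_small,lem:path_gamma_small}: propose the one-coordinate candidate $x^\star=x_v^\star e_v$, check the coordinatewise KKT conditions \cref{eq:KKT_coord_rho}, and conclude by $\alpha$-strong convexity that it is the unique minimizer. We write $Q=\frac{1+\alpha}{2}I-\frac{1-\alpha}{2}D^{-1/2}AD^{-1/2}$. Since $d_v=1$ and $d_w=m$, the relevant entries are
\[
Q_{vv}=\tfrac{1+\alpha}{2},\qquad Q_{wv}=-\tfrac{1-\alpha}{2\sqrt m},\qquad Q_{u_iv}=0,
\]
and the linear term is $D^{-1/2}s=e_v$.

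\textbf{Active coordinate.} The KKT condition at $v$ is $\nabla_v f(x^\star)=-\alpha\rho$. Because $\nabla_v f(x^\star)=\frac{1+\alpha}{2}x_v^\star-\alpha$, this gives
\[
x_v^\star=\frac{2\alpha(1-\rho)}{1+\alpha},
\]
which is positive exactly when $\rho<1$.

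\textbf{Leaves $u_i$.} These are not adjacent to $v$ and carry no seed mass, so $\nabla_{u_i}f(x^\star)=0\in[-\alpha\rho,0]$, and the inactive condition holds trivially.

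\textbf{Center $w$.} Here
\[
\nabla_w f(x^\star)=Q_{wv}x_v^\star=-\frac{1-\alpha}{2\sqrt m}\cdot\frac{2\alpha(1-\rho)}{1+\alpha}\le 0.
\]
The inactive condition $|\nabla_w f(x^\star)|\le\alpha\rho\sqrt m$ becomes, after cancelling $\alpha$, $(1-\alpha)(1-\rho)\le (1+\alpha)m\rho$. This is equivalent to $\rho\bigl(m(1+\alpha)+(1-\alpha)\bigr)\ge 1-\alpha$, that is, $\rho\ge\rho_0$. So for $\rho\in[\rho_0,1)$ all KKT conditions hold and $S^\star=\{v\}$.

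\textbf{Margin.} It remains to compute
\[
\gamma_w=\frac{\alpha\rho\sqrt m-|\nabla_w f(x^\star)|}{\sqrt m}
=\alpha\rho-\frac{(1-\alpha)\alpha(1-\rho)}{(1+\alpha)m}
=\frac{\alpha}{(1+\alpha)m}\Bigl(\rho\bigl(m(1+\alpha)+(1-\alpha)\bigr)-(1-\alpha)\Bigr).
\]
Factoring out $m(1+\alpha)+(1-\alpha)$ gives the stated expression $\frac{\alpha(m(1+\alpha)+(1-\alpha))}{(1+\alpha)m}(\rho-\rho_0)$, which vanishes at $\rho=\rho_0$.

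There is no real obstacle here. The only step needing care is this final algebra: keeping the $\sqrt m$ normalizations and the factor $\frac{1-\alpha}{2}$ consistent so that the margin factors exactly through $\rho-\rho_0$.
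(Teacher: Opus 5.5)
Your proposal is correct and matches the paper's proof: compute the entries of $Q$, solve the active KKT equation at $v$, verify the inactive conditions at the pendant leaves (zero gradient) and at the center (equivalent to $\rho\ge\rho_0$), invoke $\alpha$-strong convexity for uniqueness, and factor the margin through $\rho-\rho_0$. Your algebra, including the normalization with $\lambda_w=\alpha\rho\sqrt{m}$, checks out.
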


\begin{proof}
Since $d_v=1$ and $s=e_v$, we have $D^{-1/2}s=e_v$.
The PageRank matrix satisfies
\[
Q_{vv}=\frac{1+\alpha}{2},
\qquad
Q_{wv}=Q_{vw}=-\frac{1-\alpha}{2\sqrt{m}},
\qquad
Q_{u_i v}=0 \quad \text{for all } i,
\]
because there are no edges between leaves.
Assume $x^\star = x_v^\star e_v$ with $x_v^\star > 0$.
The active KKT condition at $v$ gives
\[
Q_{vv}x_v^\star - \alpha = -\alpha\rho,
\]
so
\[
x_v^\star = \frac{2\alpha(1-\rho)}{1+\alpha},
\]
which is positive for $\rho<1$. For the center $w$ (with $d_w=m$),
\[
\nabla_w f(x^\star) = Q_{wv}x_v^\star
= -\frac{1-\alpha}{2\sqrt{m}}\cdot\frac{2\alpha(1-\rho)}{1+\alpha}
= -\frac{\alpha(1-\alpha)(1-\rho)}{(1+\alpha)\sqrt{m}}.
\]
The inactive KKT condition at $w$ requires
$|\nabla_w f(x^\star)|\le \alpha\rho\sqrt{m}$, i.e.,
\[
\frac{(1-\alpha)(1-\rho)}{1+\alpha}\le \rho m.
\]
Solving the equality case yields
\[
\rho_0 = \frac{1-\alpha}{m(1+\alpha)+(1-\alpha)},
\]
and thus the condition holds for all $\rho\ge \rho_0$. Each pendant leaf $u_i$ is neither the seed nor adjacent to $v$, so
\[
|\nabla_{u_i} f(x^\star)|=0\le \alpha\rho,
\]
and its KKT condition is satisfied.
By $\alpha$-strong convexity, $x^\star$ is the unique minimizer and $S^\star=\{v\}$. Finally, the degree-normalized margin at $w$ is
\[
\gamma_w
=
\alpha\rho - \frac{|\nabla_w f(x^\star)|}{\sqrt{m}}
=
\alpha\rho - \frac{\alpha(1-\alpha)(1-\rho)}{(1+\alpha)m}
=
\frac{\alpha\bigl(m(1+\alpha)+(1-\alpha)\bigr)}{(1+\alpha)m}\,(\rho-\rho_0),
\]
which vanishes exactly at $\rho=\rho_0$.
\end{proof}

We next derive the exact criterion for activation of the center $w$.
The FISTA update decides activation through the forward point $u(y)=y-\nabla f(y)$, since the proximal step makes the coordinate $w$ nonzero exactly when $|u(y)_w|$ exceeds the weighted soft-threshold $\alpha\rho_0\sqrt{m}$.
On the star graph, if $y$ is supported on $\{v,w\}$, then a direct calculation shows that $u(y)_w=\frac{1-\alpha}{2}(y_w+y_v/\sqrt{m})$, so the center is influenced only by its own value and by the seed value transmitted through the unique edge $(v,w)$.
The breakpoint $\rho=\rho_0$ is chosen so that, at the optimum supported only on the seed leaf, the center $w$ is exactly at the point where the proximal update changes from keeping it zero to making it nonzero, namely $\frac{1-\alpha}{2\sqrt{m}}x_v^\star=\alpha\rho_0\sqrt{m}$.
Rewriting the threshold test using this identity yields the criterion below.
In particular, before the first activation, when $y_w=0$ and the iterates are nonnegative, the condition reduces to $y_v>x_v^\star$.
Thus the lemma is the bridge to \Cref{lem:fista_overshoot}: once we prove that FISTA overshoots the seed coordinate beyond $x_v^\star$, activation of the center follows immediately.

\begin{lemma}\label{lem:activation_criterion}
At $\rho=\rho_0$, consider any point $y$ with $\supp(y)\subseteq\{v,w\}$.
Let
\[
x^+ \defi \prox_{\alpha\rho_0\|D^{1/2}\cdot\|_1}\!\bigl(u(y)\bigr),
\qquad
u(y)\defi y-\nabla f(y).
\]
Then
\[
x^+_{w}\neq 0
\qquad\Longleftrightarrow\qquad
|u(y)_w|> \alpha\rho_0\sqrt{m}
\qquad\Longleftrightarrow\qquad
|y_{w}\sqrt{m}+y_{v}|>x_v^\star.
\]
In particular, if $y_w,y_v\ge 0$, then
\[
x^+_{w}>0
\qquad\Longleftrightarrow\qquad
y_{w}\sqrt{m}+y_{v}>x_v^\star.
\]
\end{lemma}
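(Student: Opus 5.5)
The plan is a direct computation of the forward point $u(y)_w$, followed by the breakpoint identity from \Cref{lem:star_leaf_breakpoint}.

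\textbf{First equivalence.} This is just the coordinate formula \cref{eq:prox_in_fista} with $\eta=1$ and threshold $\alpha\rho_0\sqrt{d_w}=\alpha\rho_0\sqrt{m}$. Weighted soft-thresholding makes coordinate $w$ nonzero exactly when $|u(y)_w|$ strictly exceeds its threshold.

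\textbf{Computing $u(y)_w$.} As in the proof of \cref{thm:boundary_confinement_exposure}, write
\[
u(y)=(I-Q)y+\alpha D^{-1/2}s, \qquad I-Q=\tfrac{1-\alpha}{2}\bigl(I+D^{-1/2}AD^{-1/2}\bigr).
\]
The seed is $v\neq w$, so $(D^{-1/2}s)_w=0$. The only coordinates of $y$ that can be nonzero are $y_v$ and $y_w$. The entry $(D^{-1/2}AD^{-1/2})_{wv}$ equals $1/\sqrt{d_wd_v}=1/\sqrt m$, and $(D^{-1/2}AD^{-1/2})_{ww}=0$ because there are no self-loops. Hence
\[
u(y)_w=\tfrac{1-\alpha}{2}\bigl(y_w+y_v/\sqrt m\bigr)=\tfrac{1-\alpha}{2\sqrt m}\bigl(y_w\sqrt m+y_v\bigr).
\]

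\textbf{Second equivalence.} From the proof of \cref{lem:star_leaf_breakpoint}, at $\rho=\rho_0$ the inactive KKT inequality at $w$ is tight:
\[
\tfrac{1-\alpha}{2\sqrt m}\,x_v^\star=\alpha\rho_0\sqrt m, \qquad x_v^\star=\tfrac{2\alpha(1-\rho_0)}{1+\alpha}.
\]
I would verify this identity by substitution; it is equivalent to $(1-\alpha)(1-\rho_0)=\rho_0m(1+\alpha)$, which is exactly the defining equation of $\rho_0$. Substituting the threshold into the test $|u(y)_w|>\alpha\rho_0\sqrt m$ and dividing by the positive factor $\tfrac{1-\alpha}{2\sqrt m}$ (using $\alpha<1$) gives $|y_w\sqrt m+y_v|>x_v^\star$.

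\textbf{Nonnegative case.} If $y_w,y_v\ge0$, the absolute value can be dropped. Moreover $u(y)_w\ge0$, so any nonzero $x^+_w$ has the sign of $u(y)_w$, which gives $x^+_w>0$.

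There is no real obstacle here. The only care needed is to confirm the breakpoint identity and to note that the diagonal of $A$ is zero.
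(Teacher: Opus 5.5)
Your proposal is correct and follows essentially the same route as the paper's proof: the soft-threshold criterion at $w$, the direct computation $u(y)_w=\tfrac{1-\alpha}{2}\bigl(y_w+y_v/\sqrt m\bigr)$, and the breakpoint identity $\tfrac{1-\alpha}{2\sqrt m}x_v^\star=\alpha\rho_0\sqrt m$ used to rescale the threshold test. The nonnegative case is handled by the same sign observation on $u(y)_w$. Your explicit check that this identity is equivalent to the defining equation of $\rho_0$ is a welcome addition, since the paper only asserts it.
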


\begin{proof}
By the coordinate formula for the proximal operator,
\[
x^+_{w}\neq 0
\qquad\Longleftrightarrow\qquad
|u(y)_w| > \alpha\rho_0\sqrt{m}.
\]
Since $\supp(y)\subseteq\{v,w\}$ and $w$ is not the seed,
\[
u(y)_w
=
\bigl((I-Q)y\bigr)_w
=
\frac{1-\alpha}{2}\left(y_w+\frac{y_v}{\sqrt{m}}\right).
\]
At the breakpoint,
\[
\frac{1-\alpha}{2\sqrt{m}}\,x_v^\star
=
\alpha\rho_0\sqrt{m}.
\]
Therefore,
\[
|u(y)_w| > \alpha\rho_0\sqrt{m}
\quad\Longleftrightarrow\quad
\frac{1-\alpha}{2}\left|y_w+\frac{y_v}{\sqrt{m}}\right|
>
\frac{1-\alpha}{2\sqrt{m}}x_v^\star
\quad\Longleftrightarrow\quad
|y_w\sqrt{m}+y_v|>x_v^\star.
\]
If $y_w,y_v\ge0$, then $u(y)_w\ge0$, so $x_w^+>0$ iff $x_w^+\neq0$, giving the last claim.
\end{proof}

We now show that FISTA generates exactly the condition required by \Cref{lem:activation_criterion}.
The key point is that, before the center becomes active, every iterate remains supported on the seed leaf $v$, so the dynamics reduce to a one-dimensional accelerated proximal-gradient iteration on the seed coordinate alone.
In this regime, the update at $v$ is affine, and the error relative to the optimum, $e_k \defi x_{k,v}-x_v^\star$, satisfies an explicit scalar recurrence.
This allows us to compute the first few iterates exactly.
We verify first that the extrapolated points at $k=0$ and $k=1$ do not cross the activation threshold, so the center is still inactive.
At $k=2$, however, the momentum term pushes the extrapolated seed coordinate past the critical value $x_v^\star$, that is, $y_{2,v}>x_v^\star$.
By \Cref{lem:activation_criterion}, this activates the center $w$.

\begin{lemma}\label{lem:fista_overshoot}
Run \cref{eq:apg_fista} on $F_{\rho_0}$ (cf.~\cref{eq:reg_personalized_pagerank})
for the graph $G(m)$ with seed $s=e_v$, starting from $x_{-1}=x_0=0$.
Then
\begin{equation}\label{eq:overshoot_exact}
y_{2,v} - x_v^\star
= \frac{(1-\alpha)\beta^2}{2}\, x_v^\star
> 0.
\end{equation}
Consequently, FISTA activates the center $w$ at iteration $k=2$.
\end{lemma}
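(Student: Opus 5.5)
The plan is to reduce the dynamics to the seed coordinate, compute the first three extrapolated points exactly, and then invoke \Cref{lem:activation_criterion}. While $\supp(x_k)\cup\supp(x_{k-1})\subseteq\{v\}$, the point $y_k$ is supported on $\{v\}$, and the forward map acts on each coordinate as follows.

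\emph{Seed coordinate.} Using $Q_{vv}=\frac{1+\alpha}{2}$ and $D^{-1/2}s=e_v$, we get $u(y)_v=c\,y_v+\alpha$, where $c\defi\frac{1-\alpha}{2}$. The threshold at $v$ is $\alpha\rho_0$ since $d_v=1$. Because $u(y)_v\ge\alpha>\alpha\rho_0$ whenever $y_v\ge0$, the soft-threshold acts affinely:
\[
x_{k+1,v}=c\,y_{k,v}+\alpha(1-\rho_0).
\]
The fixed point of this map is exactly $x_v^\star=\frac{2\alpha(1-\rho_0)}{1+\alpha}$ from \Cref{lem:star_leaf_breakpoint}. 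Writing $e_k\defi x_{k,v}-x_v^\star$, the recurrence becomes
\[
e_{k+1}=c\bigl((1+\beta)e_k-\beta e_{k-1}\bigr).
\]

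\emph{Pendant leaves.} Each $u_i$ is not adjacent to $v$, so $u(y)_{u_i}=0$ and these leaves stay zero.

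\emph{Center.} By \Cref{lem:activation_criterion} with $y_w=0$ and $y_v\ge0$, the center activates iff $y_{k,v}>x_v^\star$.

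Next I unroll the recurrence from $e_{-1}=e_0=-x_v^\star$.
\begin{itemize}
\item $k=0$: $y_0=0$, so $w$ is not activated, and $e_1=-c\,x_v^\star$.
\item $k=1$: $y_{1,v}-x_v^\star=(1+\beta)e_1-\beta e_0=x_v^\star\bigl(\beta-c(1+\beta)\bigr)$.
\end{itemize}
The key algebraic identity is $c(1+\beta)^2=2\beta$. It follows from $1+\beta=\frac{2}{1+\sqrt\alpha}$ and $1-\alpha=(1-\sqrt\alpha)(1+\sqrt\alpha)$. With it, $y_{1,v}-x_v^\star=x_v^\star\,\frac{\beta(\beta-1)}{1+\beta}<0$. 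Also $y_{1,v}=(1+\beta)x_{1,v}\ge0$. Hence $w$ is not activated at $k=1$, and $\supp(x_2)\subseteq\{v\}$, so the reduction remains valid through $y_2$.

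Then
\[
e_2=c\,x_v^\star\bigl(\beta-c(1+\beta)\bigr),
\]
and
\[
y_{2,v}-x_v^\star=(1+\beta)e_2-\beta e_1=c\,x_v^\star\bigl[\beta(1+\beta)+\beta-c(1+\beta)^2\bigr].
\]
Substituting $c(1+\beta)^2=2\beta$ collapses the bracket to $\beta^2$. This gives \eqref{eq:overshoot_exact}:
\[
y_{2,v}-x_v^\star=\frac{(1-\alpha)\beta^2}{2}\,x_v^\star.
\]
This is strictly positive since $\alpha<1$ makes both $\beta$ and $c$ positive. Finally, $y_{2,v}>x_v^\star>0$ and $y_{2,w}=0$, so \Cref{lem:activation_criterion} yields $x_{3,w}>0$. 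That is, the step $k=2$ activates the center.

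The main obstacle is bookkeeping rather than any deep step. Three things must be checked at each step before invoking the reduced recurrence: nonnegativity of $y_{k,v}$, that the prox is in its affine (non-clipped) regime, and that $w$ and the $u_i$ stay inactive at $k=0,1$. Beyond that, one must notice the identity $c(1+\beta)^2=2\beta$, which makes the closed form come out cleanly.
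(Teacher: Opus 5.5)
Your proposal is correct and follows essentially the same route as the paper: the same reduction to the seed coordinate, the same scalar error recurrence $e_{k+1}=\tfrac{1-\alpha}{2}\bigl((1+\beta)e_k-\beta e_{k-1}\bigr)$, exact computation of $y_{1,v}-x_v^\star<0$ and $y_{2,v}-x_v^\star>0$, and then the activation criterion. The only difference is cosmetic. You simplify with the single identity $c(1+\beta)^2=2\beta$, whereas the paper uses $(1+\beta)a=1-\sqrt{\alpha}$ and $(1+\beta)\sqrt{\alpha}-1=-\beta$; both give $y_{1,v}-x_v^\star=-\sqrt{\alpha}\,\beta\,x_v^\star$ and the same closed form for $y_{2,v}-x_v^\star$.
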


\begin{proof}
At $k=0$, we have $y_0=0$, so only the seed coordinate can become active.
Thus
\[
x_{1,v}=\alpha(1-\rho_0)>0,
\qquad
x_{1,w}=x_{1,u_i}=0.
\]
Hence $x_1$ is supported on $\{v\}$. Define the errors
\[
e_k \defi x_{k,v}-x_v^\star,
\qquad
\tilde e_k \defi y_{k,v}-x_v^\star.
\]
As long as $w$ has not been activated, both $x_k$ and $y_k$ are supported on $\{v\}$.
On such steps,
\[
u(y_k)_v=(1-Q_{vv})y_{k,v}+\alpha = \frac{1-\alpha}{2}y_{k,v}+\alpha.
\]
Since $y_{k,v}\ge 0$ on the steps we consider, the soft-threshold at $v$ acts affinely, and therefore
\[
x_{k+1,v}
=
u(y_k)_v-\alpha\rho_0
=
\frac{1-\alpha}{2}y_{k,v}+\alpha(1-\rho_0).
\]
Writing
\[
a \defi \frac{1-\alpha}{2},
\]
and subtracting the fixed-point identity
\[
x_v^\star = a\,x_v^\star+\alpha(1-\rho_0),
\]
we get
\[
e_{k+1}=a\,\tilde e_k
= a\bigl((1+\beta)e_k-\beta e_{k-1}\bigr).
\]
Equivalently,
\begin{equation}\label{eq:1d_recurrence}
e_{k+1}=a(1+\beta)e_k-a\beta e_{k-1}.
\end{equation}

\emph{Initial values.}
We have
\[
e_0 = -x_v^\star.
\]
The first FISTA step gives
\[
x_{1,v}=\alpha(1-\rho_0),
\]
hence
\[
e_1
=
\alpha(1-\rho_0)-\frac{2\alpha(1-\rho_0)}{1+\alpha}
=
\frac{1-\alpha}{2}\,e_0
=
a e_0.
\]

\emph{The center is not activated at $k=0$ or $k=1$.}
At $k=0$, $y_{0,v}=0<x_v^\star$, so \Cref{lem:activation_criterion} shows that $w$ is not activated. At $k=1$, since $x_{1,w}=x_{0,w}=0$, we have $y_{1,w}=0$, and
\[
\tilde e_1=(1+\beta)e_1-\beta e_0
= e_0\bigl((1+\beta)a-\beta\bigr).
\]
Using
\[
(1+\beta)a = 1-\sqrt{\alpha}
\qquad\text{and}\qquad
(1+\beta)a-\beta = \sqrt{\alpha}\beta>0,
\]
we get
\[
\tilde e_1=\sqrt{\alpha}\beta\,e_0<0
\]
because $e_0<0$.
Thus $y_{1,v}<x_v^\star$, and \Cref{lem:activation_criterion} again implies that $w$ is not activated at $k=1$.

\emph{Computing $\tilde e_2$.}
Since $w$ is not activated at $k=0$ or $k=1$, the recurrence \eqref{eq:1d_recurrence} applies up to $e_2$:
\[
e_2
=
a(1+\beta)e_1-a\beta e_0
=
a e_0\bigl(a(1+\beta)-\beta\bigr)
=
a\sqrt{\alpha}\beta\,e_0.
\]
Therefore,
\begin{align*}
\tilde e_2
&=
(1+\beta)e_2-\beta e_1 \\
&=
(1+\beta)\,a\sqrt{\alpha}\beta\,e_0-\beta a e_0 \\
&=
a\beta e_0\bigl((1+\beta)\sqrt{\alpha}-1\bigr).
\end{align*}
Now
\[
(1+\beta)\sqrt{\alpha}
=
\frac{2\sqrt{\alpha}}{1+\sqrt{\alpha}},
\]
so
\[
(1+\beta)\sqrt{\alpha}-1
=
\frac{\sqrt{\alpha}-1}{1+\sqrt{\alpha}}
=
-\beta.
\]
Hence
\[
\tilde e_2
=
-a\beta^2 e_0
=
a\beta^2 x_v^\star
=
\frac{(1-\alpha)\beta^2}{2}\,x_v^\star
>
0.
\]
Thus $y_{2,v}>x_v^\star$.
Also, since $x_{2,w}=x_{1,w}=0$, we have $y_{2,w}=0$.
Applying \Cref{lem:activation_criterion} with $y=y_2$ therefore shows that FISTA activates $w$ at iteration $k=2$.
\end{proof}

We now convert the activation of the center into a lower bound on the total degree-weighted work.
Once \Cref{lem:fista_overshoot} shows that the center becomes active at iteration $k=2$, the next iterate $x_3$ already contains the high-degree node $w$, and the following extrapolated point $y_3$ contains it as well.
Under our work model, this immediately creates work of order $m$ in two successive iterations.
To obtain a lower bound for reaching a prescribed accuracy, it remains to show that this expensive activation occurs before FISTA can terminate.
We therefore bound the objective gap explicitly along the first few iterates and prove that, for every target $\varepsilon\le \varepsilon_0(\alpha)$, none of $x_0,x_1,x_2,x_3$ is yet $\varepsilon$-accurate.
Hence any successful run must execute at least four iterations and must incur at least $2m$ total work.
By contrast, ISTA remains supported on the seed leaf throughout, so its work stays independent of $m$.

\begin{proposition}\label{prop:lower_bound}
Fix $\alpha\in(0,1)$ and define
\[
\varepsilon_0(\alpha)
\;\defi\;
\frac{\alpha^3(1-\alpha)^4\beta^4}{2(3+\alpha)^2}
\;>\;0,
\qquad
\beta=\frac{1-\sqrt{\alpha}}{1+\sqrt{\alpha}}.
\]
On the graph $G(m)$ with seed $s=e_v$ and $\rho=\rho_0$, for every target accuracy
\[
0<\varepsilon\le \varepsilon_0(\alpha),
\]
standard FISTA requires total degree-weighted work at least $2m$ to reach
\[
F_{\rho_0}(x_N)-F_{\rho_0}(x^\star)\le \varepsilon.
\]
By contrast, ISTA reaches the same target with total work
\[
O\!\left(\frac{1}{\alpha} \log\!\frac{1}{\varepsilon}\right),
\]
independent of $m$.
\end{proposition}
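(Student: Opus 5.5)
The plan has two parts: a lower bound for FISTA and an upper bound for ISTA. For FISTA, \Cref{lem:fista_overshoot} shows that the center $w$ is activated at iteration $k=2$, so $w\in\supp(x_3)$. Since $x_{3,w}>0=x_{2,w}$, we also get $y_{3,w}=(1+\beta)x_{3,w}\neq 0$. Hence $\mathrm{work}_2\ge \vol(\supp(x_3))\ge d_w=m$ and $\mathrm{work}_3\ge\vol(\supp(y_3))\ge m$. It remains to show that FISTA cannot stop before executing iterations $k=2$ and $k=3$. That is, we must show $F_{\rho_0}(x_j)-F_{\rho_0}(x^\star)>\varepsilon_0(\alpha)$ for $j=0,1,2,3$. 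Then any run reaching accuracy $\varepsilon\le\varepsilon_0$ must output some $x_N$ with $N\ge4$, and so must pay $\mathrm{work}_2+\mathrm{work}_3\ge 2m$.

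For the gap bounds, strong convexity is the wrong tool, since it gives only lower-bound-type control in the wrong direction. Instead I would use the exact structure. For $j\le 2$, $x_j$ is supported on $\{v\}$, and restricted to that coordinate $F_{\rho_0}$ is a one-dimensional quadratic with curvature $Q_{vv}=\frac{1+\alpha}{2}$, minimized at $x_v^\star$ (the KKT condition at $v$ holds with equality). This gives the exact identity
\[
F_{\rho_0}(x_j)-F_{\rho_0}(x^\star)=\tfrac{1+\alpha}{4}e_j^2 .
\]
From the proof of \Cref{lem:fista_overshoot} we have $e_0=-x_v^\star$, $e_1=ae_0$ and $e_2=a\sqrt\alpha\beta e_0$, with $x_v^\star=2\alpha(1-\rho_0)/(1+\alpha)$. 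Lower-bounding $1-\rho_0$ by a constant multiple of $1$ (e.g.\ $\rho_0\le(1-\alpha)/2$, since $m\ge2$) then shows that each gap exceeds $\varepsilon_0(\alpha)$. The $\beta^4$ and $(1-\alpha)^4$ factors in $\varepsilon_0$ leave enough room for this.

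The main obstacle is $x_3$, which now has support $\{v,w\}$. I would compute $x_{3,v}$ and $x_{3,w}$ explicitly from $y_2$ (with $y_{2,w}=0$):
\[
x_{3,v}=a\,y_{2,v}+\alpha(1-\rho_0),\qquad x_{3,w}=a\,y_{2,v}/\sqrt m-\alpha\rho_0\sqrt m>0 .
\]
The second quantity is $a(y_{2,v}-x_v^\star)/\sqrt m=a^2\beta^2x_v^\star/\sqrt m$ by the breakpoint identity. Then I would lower-bound the gap using only the $v$-coordinate: $e_3=a\tilde e_2=a^2\beta^2x_v^\star$. The gap is at least the minimum of $F$ over $x_w$ with $x_v=x_{3,v}$ held fixed, minus $F(x^\star)$. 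The plan is to show this partial minimization gives a gap of at least $c(\alpha)e_3^2$ via a Schur-complement bound, with the $2\times2$ matrix $Q$ restricted to $\{v,w\}$ having determinant $\ge \alpha\cdot\frac{1+\alpha}{2}$-type constants, and the $\ell_1$ term on $w$ only adding to the gap. Alternatively, I would use $F(x_3)-F^\star\ge\frac{\alpha}{2}\|x_3-x^\star\|^2\ge\frac{\alpha}{2}e_3^2$ directly. With $e_3^2=a^4\beta^4(x_v^\star)^2$, this gives roughly $\frac{\alpha}{2}\cdot\frac{(1-\alpha)^4}{16}\beta^4\cdot\frac{4\alpha^2(1-\rho_0)^2}{(1+\alpha)^2}$. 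Comparing this with $\varepsilon_0=\alpha^3(1-\alpha)^4\beta^4/(2(3+\alpha)^2)$ should reduce to an elementary inequality in $\alpha$, which I would check carefully, perhaps using $1-\rho_0\ge\frac{2}{3+\alpha}$-type bounds. Strong convexity also covers $j\le2$ uniformly, which simplifies the writeup.

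For ISTA, I would show by induction that the iterates stay in $\supp=\{v\}$ and increase monotonically to $x_v^\star$. Starting from $0$, the iteration is $x_{k+1,v}=a x_{k,v}+\alpha(1-\rho_0)$, so $x_{k,v}\le x_v^\star$ for all $k$. By \Cref{lem:activation_criterion} (with threshold $y_v>x_v^\star$), $w$ is never activated, and the other leaves have zero forward step. Each iteration therefore costs $2d_v=2$. The standard ISTA rate $(1-\alpha)^k$ with initial gap $\le\alpha/2$ (\cref{lemma:initial_gap}) gives $O(\alpha^{-1}\log(1/\varepsilon))$ iterations, and hence the same bound on the work, independent of $m$.
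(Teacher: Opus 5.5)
Your skeleton matches the paper's: $\mathrm{work}_2+\mathrm{work}_3\ge 2m$ from the activation at $k=2$, the exact identity $\frac{1+\alpha}{4}e_j^2$ for $j\le 2$, and a separate bound at $x_3$. Your ISTA induction ($x_{k,v}=x_v^\star(1-a^k)<x_v^\star$, so \Cref{lem:activation_criterion} never fires) is a fine self-contained substitute for the paper's citation of ISTA support containment.

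The genuine gap is the quantitative step at $x_3$, where $\varepsilon_0$ leaves no slack. With $a=\frac{1-\alpha}{2}$, one has by construction $\varepsilon_0(\alpha)=\frac{\alpha}{2}\bigl(a^2\beta^2\cdot\frac{4\alpha}{3+\alpha}\bigr)^2$. This is exactly the strong-convexity bound $\frac{\alpha}{2}e_3^2$ at the smallest possible value $x_v^\star=\frac{4\alpha}{3+\alpha}$, attained at $m=2$ via $\rho_0\le\frac{1-\alpha}{3+\alpha}$. Neither bound you mention suffices there. Reusing $\rho_0\le\frac{1-\alpha}{2}$ gives only $x_v^\star\ge\alpha$, hence $\frac{\alpha}{2}e_3^2\ge\frac{\alpha^3(1-\alpha)^4\beta^4}{32}$, which lies below $\varepsilon_0$ because $2(3+\alpha)^2<32$. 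The bound $1-\rho_0\ge\frac{2}{3+\alpha}$ loses a further factor $(1+\alpha)^2$. Even with the sharp bound $1-\rho_0\ge\frac{2(1+\alpha)}{3+\alpha}$, your non-strict chain $F_{\rho_0}(x_3)-F_{\rho_0}(x^\star)\ge\frac{\alpha}{2}\|x_3-x^\star\|_2^2\ge\frac{\alpha}{2}e_3^2$ only certifies $\ge\varepsilon_0$. That does not rule out termination at $N=3$ when $m=2$ and $\varepsilon=\varepsilon_0$, in which case only $\mathrm{work}_2\ge m$ has been paid. The paper gets strictness from $x_{3,w}>0=x^\star_w$, so $\|x_3-x^\star\|_2^2=e_3^2+x_{3,w}^2>e_3^2$. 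You computed $x_{3,w}>0$ but did not use it here.

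Second, your remark that strong convexity ``also covers $j\le 2$'' is false. Since $x_v^\star\le\frac{2\alpha}{1+\alpha}$, we get $\frac{\alpha}{2}e_2^2\le\frac{\alpha^4(1-\alpha)^2\beta^2}{2(1+\alpha)^2}$, whose ratio to $\varepsilon_0$ is at most $\frac{\alpha(3+\alpha)^2}{(1+\alpha)^2(1-\alpha)^2\beta^2}\to 0$ as $\alpha\to 0$. Contrary to your opening remark, strong convexity points in the right direction, and it is what the paper uses at $x_3$. It is simply too weak at $x_2$, where the exact curvature $\frac{1+\alpha}{2}$ along the $v$-ray is essential, so keep your primary argument there.

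Your Schur-complement alternative at $x_3$ would create slack, since the true Schur complement is $\frac{1+\alpha}{2}-\frac{(1-\alpha)^2}{2m(1+\alpha)}\ge\frac{2\alpha}{1+\alpha}>\alpha$. Two details matter, though. A determinant bound of the form $\alpha\cdot\frac{1+\alpha}{2}$ only yields a Schur complement $\ge\alpha$, which again gives no slack. And you must start from $F_{\rho_0}(x_3)-F_{\rho_0}(x^\star)=\frac12\delta^\top Q\delta+\bigl[g(x_3)-g(x^\star)+\langle\nabla f(x^\star),\delta\rangle\bigr]$ with $\delta\defi x_3-x^\star$, using that the bracket is nonnegative by KKT. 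Simply discarding the penalty $\alpha\rho_0\sqrt m\,|x_w|$ fails: at the breakpoint it exactly cancels the negative linear term $\nabla_w f(x^\star)\,x_{3,w}=-\alpha\rho_0\sqrt m\,x_{3,w}$, and without it the lower bound becomes negative (e.g.\ at $m=2$).
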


\begin{proof}
Let
\[
a \defi \frac{1-\alpha}{2}.
\]

\emph{FISTA lower bound.}
By \Cref{lem:fista_overshoot}, FISTA activates $w$ at iteration $k=2$, i.e.,
$x_{3,w}>0$.
Hence
\[
w\in \supp(x_3)
\qquad\text{and}\qquad
\vol(\supp(x_3))\ge \deg(w)=m.
\]
Also, $x_{2,w}=0$, so
\[
y_3 = x_3+\beta(x_3-x_2)
\]
satisfies
\[
y_{3,w}=(1+\beta)x_{3,w}>0.
\]
Therefore
\[
w\in \supp(y_3)
\qquad\text{and}\qquad
\vol(\supp(y_3))\ge m.
\]
Thus iterations $k=2$ and $k=3$ each incur per-iteration work at least $m$:
\[
\mathrm{work}_2\ge \vol(\supp(x_3))\ge m,
\qquad
\mathrm{work}_3\ge \vol(\supp(y_3))\ge m.
\]
It remains to show that, for every target accuracy $0<\varepsilon\le \varepsilon_0(\alpha)$, the algorithm must execute at least four iterations. For $k=0,1,2$, the center has not yet been activated, so $x_k$ is supported on $\{v\}$.
Moreover these iterates are nonnegative.
Hence, on the ray $\{x e_v:\; x\ge 0\}$,
\[
F_{\rho_0}(x e_v)
=
\frac{1+\alpha}{4}x^2-\alpha(1-\rho_0)x,
\]
and therefore
\[
F_{\rho_0}(x_k)-F_{\rho_0}(x^\star)
=
\frac{1+\alpha}{4}(x_{k,v}-x_v^\star)^2.
\]
From the proof of \Cref{lem:fista_overshoot}, the corresponding errors satisfy
\[
e_0 \defi x_{0,v}-x_v^\star = -x_v^\star,\qquad
e_1 = -a x_v^\star,\qquad
e_2 = -a\sqrt{\alpha}\beta\,x_v^\star.
\]
Since $a,\sqrt{\alpha}\beta\in(0,1)$, the smallest of the first three gaps occurs at $k=2$, and therefore
\[
F_{\rho_0}(x_k)-F_{\rho_0}(x^\star)
\;\ge\;
F_{\rho_0}(x_2)-F_{\rho_0}(x^\star)
=
\frac{1+\alpha}{4}a^2\alpha\beta^2 (x_v^\star)^2
\qquad\text{for }k=0,1,2.
\]
At $\rho=\rho_0$,
\[
x_v^\star
=
\frac{2\alpha(1-\rho_0)}{1+\alpha}
=
\frac{2\alpha m}{m(1+\alpha)+(1-\alpha)}
\ge \frac{4\alpha}{3+\alpha},
\]
where the last inequality uses $m\ge2$.
Substituting this bound yields
\[
F_{\rho_0}(x_k)-F_{\rho_0}(x^\star)
\ge
\frac{1+\alpha}{4}a^2\alpha\beta^2
\left(\frac{4\alpha}{3+\alpha}\right)^2
=
\frac{\alpha^3(1+\alpha)(1-\alpha)^2\beta^2}{(3+\alpha)^2}
>
\varepsilon_0(\alpha)
\]
for $k=0,1,2$, because
\[
2(1+\alpha)>(1-\alpha)^2\beta^2.
\]
For $k=3$, using \Cref{lem:fista_overshoot} and the $v$-update,
\[
x_{3,v}-x_v^\star
=
a(y_{2,v}-x_v^\star)
=
a^2\beta^2 x_v^\star.
\]
By $\alpha$-strong convexity,
\[
F_{\rho_0}(x_3)-F_{\rho_0}(x^\star)
\ge
\frac{\alpha}{2}\|x_3-x^\star\|_2^2
>
\frac{\alpha}{2}(x_{3,v}-x_v^\star)^2,
\]
where the inequality is strict because $x_{3,w}>0$ while $x_w^\star=0$.
Using the bound on $x_v^\star$ above,
\[
F_{\rho_0}(x_3)-F_{\rho_0}(x^\star)
>
\frac{\alpha}{2}\left(a^2\beta^2\cdot \frac{4\alpha}{3+\alpha}\right)^2
=
\varepsilon_0(\alpha).
\]
Hence
\[
F_{\rho_0}(x_N)-F_{\rho_0}(x^\star)>\varepsilon_0(\alpha)\ge \varepsilon
\qquad\text{for every }N\le 3.
\]
So any run that reaches
\[
F_{\rho_0}(x_N)-F_{\rho_0}(x^\star)\le \varepsilon
\qquad\text{with }0<\varepsilon\le \varepsilon_0(\alpha)
\]
must have $N\ge4$.
Since total work sums over iterations,
\[
\mathrm{Work}(N)\ge \mathrm{work}_2+\mathrm{work}_3\ge 2m.
\]

\emph{ISTA upper bound on the same instance.} By Theorem 1(ii) in \cite{fountoulakis2019variational}, the support of each ISTA iterate is contained in the optimal support. Furthermore, \cref{lem:star_leaf_breakpoint} shows that the optimal support is \(S^\star=\{v\}\), so \(|S^\star|=1\). Hence, the per-iteration work of ISTA is \(\mathcal{O}(1)\) with respect to \(m\). Moreover, Theorem 10.30 in \cite{beck2017first} states that ISTA requires \(\mathcal{O}\!\left(\frac{1}{\alpha}\log\frac{1}{\epsilon}\right)\) iterations to obtain a solution whose objective value is within \(\epsilon\) of the optimum. Therefore, the total work of ISTA is \(\mathcal{O}\!\left(\frac{1}{\alpha}\log\frac{1}{\epsilon}\right)\), which is independent of \(m\).
\end{proof}

\section{Experimental setting details}
\label{app:exp_setting_details}

This section collects the common experimental ingredients used throughout the synthetic experiments
in \Cref{subsec:synthetic_volB_sweep,subsec:b600_sweeps}.
All experiments solve the $\ell_1$-regularized PageRank objective \Cref{eq:reg_personalized_pagerank} and report
runtime using the degree-weighted work metric in \Cref{eq:deg_work_model}.
When we refer to the no-percolation diagnostic, we mean the inequality from \Cref{thm:boundary_confinement_exposure}.

\textbf{Synthetic graph family: core-boundary-exterior construction.}
Each synthetic instance is an undirected graph with a three-way partition of the node set $V =S \;\cup\; \mathcal{B} \;\cup\; \mathrm{Ext}$,
where $S$ is a core (containing the seed), $\mathcal{B}$ is a boundary region, and
$\mathrm{Ext}$ is an exterior. The construction is deterministic. Given sizes $|S|$, $|\mathcal{B}|$, and
$|\mathrm{Ext}|$, edges are added according to the following rules:
\begin{itemize}
    \item \textit{Core clique.} The induced subgraph on $S$ is a complete graph (a clique).

    \item \textit{Core-boundary connectivity.}
    Let the core nodes be ordered as $S=\{0,1,\dots,|S|-1\}$ and let the boundary nodes be stored in an
    ordered list $(b_0,b_1,\dots,b_{|\mathcal{B}|-1})$.
    Each core node has
    $c_{\mathrm{bnd}}$ boundary per core neighbors in $\mathcal{B}$. For each core node $u\in S$ and each $j\in\{0,1,\dots,c_{\mathrm{bnd}}-1\}$ we add the edge $(u,\; b_{ (u\cdot c_{\mathrm{bnd}}+j)\bmod |\mathcal{B}| })$.
    When $|\mathcal{B}|\ge c_{\mathrm{bnd}}$ (as in our sweeps), this gives $c_{\mathrm{bnd}}$ distinct
    boundary neighbors per core node. Each core node has fixed degree
    \(
    d_u = (|S|-1)+c_{\mathrm{bnd}}
    \)
    for $|\mathcal{B}|>0$.

    \item \textit{Boundary internal connectivity.} The boundary induces a circulant graph with an even degree
    parameter $\deg_{\mathcal{B}}$, capped at $|\mathcal{B}|-1$, and adjusted to be even.
    \item \textit{Exterior internal connectivity.} The exterior induces a circulant graph with degree
    $\deg_{\mathrm{Ext}}$, with $\deg_{\mathrm{Ext}}<|\mathrm{Ext}|$.
    \item \textit{Boundary-exterior connectivity.} Each exterior node has exactly one neighbor in $\mathcal{B}$, using the same rule as above, so the number of boundary-exterior edges equals $|\mathrm{Ext}|$.
\end{itemize}
This construction yields a dense core, an internally connected boundary band, and a highly connected
exterior, with sparse cross-region interfaces. When we visualize adjacency matrices, this produces
a clear block structure (core | boundary | exterior) and a boundary region whose size/volume can be
varied independently of the core neighborhood.

\textbf{Optimization objective and parameters.}
On each graph instance we solve the $\ell_1$-regularized PageRank objective
\Cref{eq:reg_personalized_pagerank} with a single-node seed $s=e_v$.
Unless otherwise specified, the seed node $v$ is a fixed core vertex (in the code, $v=0$). Each experiment specifies a teleportation parameter $\alpha\in(0,1]$ and a sparsity parameter
$\rho>0$. When using FISTA we set the momentum parameter to the standard strongly-convex choice $\beta \;\defi\; \frac{1-\sqrt{\alpha}}{1+\sqrt{\alpha}}$
(for PageRank, $L=1$ and $\mu=\alpha$). Both ISTA and FISTA are initialized at $x_{-1}=x_0=0$.

\textbf{Stopping criterion.}
All experiments compare ISTA and FISTA under the same KKT surrogate based on the proximal-gradient fixed point.
With unit step size, define the prox-gradient map
\[
T_{\alpha,\rho}(x)\;\defi\;\prox_{g}\!\bigl(x-\nabla f(x)\bigr),
\qquad
r(x)\;\defi\;\|x-T_{\alpha,\rho}(x)\|_{\infty}.
\]
A point $x^\star$ is optimal for \Cref{eq:reg_personalized_pagerank} if and only if $x^\star=T_{\alpha,\rho}(x^\star)$, i.e., $r(x^\star)=0$.
We therefore declare convergence when the fixed-point residual satisfies $r(x_k)\le \varepsilon$,
where $\varepsilon>0$ is the prescribed tolerance.
This termination rule is applied identically to ISTA and FISTA.
In the work-vs-$\varepsilon$ sweeps, the $x$-axis parameter \emph{is} this residual tolerance $\varepsilon$; for the other sweeps, $\varepsilon$ is held fixed (and we impose a single large global iteration cap, e.g.\ $50{,}000$, for all runs). We terminate the algorithm based on the residual rather than the objective value, since computing it does not require knowing the optimal solution.

\textbf{Degree-weighted work model.}
We measure runtime via a degree-weighted work model \Cref{eq:deg_work_model}.
For an iterate pair $(y_k,x_{k+1})$ we define the per-iteration work as
$
\mathrm{work}_k
\;\defi\;
\vol(\supp(y_k)) + \vol(\supp(x_{k+1}))
$.
For ISTA, $y_k=x_k$; for FISTA, $y_k=x_k+\beta(x_k-x_{k-1})$.
The work to reach the stopping target is the sum of $\mathrm{work}_k$ over the iterations
taken.

\textbf{No-percolation diagnostic.}
The no-percolation assumption \Cref{eq:exposure_assumption}
is satisfied for all our synthetic experiments. Conceptually, this condition is favorable for accelerated methods:
it rules out ``percolation'' of extrapolated iterates into the exterior, so FISTA is not penalized by activating a large,
highly connected ambient region. Nevertheless, our sweeps still exhibit regimes where FISTA does not improve work (and can be
slower than ISTA), showing that even when exterior exploration is provably suppressed, acceleration can lose due to transient
boundary activations.

\textbf{Default synthetic parameters.}
Unless a sweep varies them, the synthetic experiments use the baseline block sizes and degrees $|S|=60$, $|\mathrm{Ext}|=1000$, $c_{\mathrm{bnd}}=20$, $\deg_{\mathcal{B}}=82$, $\deg_{\mathrm{Ext}}=998$,
and a fixed seed $v\in S$ (node $0$ in the implementation). The specific sweep parameter(s) are described in the
corresponding experiment sections.

\textbf{Per-point graph generation and how to read sweep plots.}
Our theory gives instance-wise guarantees (each bound applies to every graph in the family), and the synthetic family itself is specified by coarse structural parameters (block sizes and target degrees), not a single fixed adjacency matrix.
Accordingly, in several sweeps we intentionally regenerate the synthetic instance at each $x$-axis value. In these cases, each dot should be interpreted as one representative draw from the family at that parameter value, i.e., a snapshot of what can happen empirically under the same coarse structure. This design avoids conclusions that are artifacts of one particular synthetic realization and is aligned with the worst-case nature of the theory.

\section{Full details for the fixed-boundary sweeps experiments}
\label{app:b600_sweeps_full}
We provide full details for the experiments in \cref{subsec:b600_sweeps}.
We follow the synthetic construction, algorithmic choices, and work-metric conventions
from \Cref{app:exp_setting_details}, and fix the boundary size to $|\mathcal{B}|=600$.
We sweep $\rho$ (with fresh graphs per point), and we additionally sweep $\alpha$ and the fixed-point residual tolerance $\varepsilon$ with $\rho=10^{-4}$ fixed (and all other baseline parameters fixed).

This experiment complements the boundary-volume sweep of \Cref{subsec:synthetic_volB_sweep} by holding the boundary size fixed ($|\mathcal{B}|=600$) and varying only the regularization strength $\rho$. The aim is to isolate the $\rho$-dependence suggested by \Cref{thm:double_reg_work} (both terms scale as $1/\rho$ when $\alpha$ and the boundary are fixed), and to check whether ISTA and FISTA respond similarly as $\rho$ increases, since their worst-case theoretical running time depends on $\rho$ in the same way. We run two versions of the $\rho$-sweep, both using a randomized graph per $\rho$:
\begin{itemize}
\item \textit{Dense-core sweep.} The core subgraph is a clique, see \Cref{app:exp_setting_details}. 
\item \textit{Sparse-core sweep.} The core subgraph is sparsified by retaining a fixed fraction of its edges while enforcing that the core remains connected (implemented by sampling a random spanning tree and then adding random core-core edges up to the target density). In the sparse variant used here we keep $20\%$ of the clique edges. We perform experiments on the sparsified-core variant to verify that the observed $\rho$-dependence is not an artifact of the highly symmetric clique core: sparsifying reduces and heterogenizes core/seed degrees. For both variants, we sweep $\rho$ over a log-spaced grid chosen so that the no-percolation inequality holds for all sampled values.
\end{itemize}

The next experiment sweeps $\alpha$, while keeping all other parameters fixed.
Sweeping $\alpha$ to smaller values makes the no-percolation condition more stringent.
Rather than reweighting edges, we keep the graph unweighted and use an $\alpha$-sweep-specific graph family in which the exterior is a complete graph on $|\mathrm{Ext}|$ nodes,
and only a prescribed number $m$ of exterior nodes have a single boundary neighbor (the remaining exterior nodes have no boundary neighbor).
We choose $|\mathrm{Ext}|$ so that the no-percolation inequality holds at the smallest swept value $\alpha_{\min}$; since the left-hand side decreases with $\alpha$, this implies no-percolation for all $\alpha\ge \alpha_{\min}$ in the sweep.

The $\alpha$ sweep in our code additionally includes an auto-tuning step that selects a single unweighted instance from this family before running the sweep.
Concretely, the tuner searches over: (i) the core-boundary fanout $c_{\mathrm{bnd}}$ (boundary neighbors per core node), (ii) the boundary internal circulant degree, and (iii) the number $m$ of exterior-to-boundary edges (one boundary neighbor for each of the first $m$ exterior nodes), with $|\mathrm{Ext}|$ set to the smallest value that enforces no-percolation at $\alpha_{\min}$.
For each candidate, it evaluates performance on a calibration grid of $12$ log-spaced $\alpha$ values in $[\alpha_{\min},0.9]$ and chooses the candidate that maximizes the fraction of calibration points where FISTA incurs larger work than ISTA.
This is meant to illustrate that acceleration can be counterproductive on some valid instances even when iteration complexity improves.

For the $\varepsilon$ sweep, we keep $\alpha=0.20$ fixed and vary the fixed-point residual tolerance over a log-spaced grid $\varepsilon\in[10^{-12},10^{-1}]$.
We use the original baseline instance (no auto-tuning and no graph modification).

\section{Additional real-data diagnostics}
\label{app:real_diagnostics}

In this section we interpret the results of the experiments on real data from \cref{subsec:real_data_experiments}.

\textbf{Diagnosing slowdowns: iterations vs.\ per-iteration work.}
The work metric counts degree-weighted support volumes touched by both the extrapolated point $y_k$ and the proximal update,
so FISTA can lose either by taking more iterations than ISTA or by having a larger per-iteration locality cost.
To separate these effects, for each seed (at $\alpha=10^{-3}$, $\rho=10^{-4}$, $\varepsilon=10^{-8}$) we plot
\[
\text{iteration ratio} \;=\; \frac{N_{\mathrm{F}}}{N_{\mathrm{I}}}
\qquad\text{vs.}\qquad
\text{per-iter ratio} \;=\; \frac{(W_{\mathrm{F}}/N_{\mathrm{F}})}{(W_{\mathrm{I}}/N_{\mathrm{I}})},
\]
where $N_{\mathrm{I}},N_{\mathrm{F}}$ are iteration counts and $W_{\mathrm{I}},W_{\mathrm{F}}$ are total works.
Since $\frac{W_{\mathrm{F}}}{W_{\mathrm{I}}} = \frac{N_{\mathrm{F}}}{N_{\mathrm{I}}}\cdot
\frac{(W_{\mathrm{F}}/N_{\mathrm{F}})}{(W_{\mathrm{I}}/N_{\mathrm{I}})}$, points with both ratios above $1$
correspond to clear slowdowns.
\Cref{fig:real_tradeoff_iters_cost} shows that on \texttt{com-Orkut} at $\alpha=10^{-3}$, FISTA is frequently slower because it
often incurs both a larger iteration count and a larger per-iteration work cost, whereas on the other datasets FISTA typically
reduces iterations while paying a moderate per-iteration locality overhead.

\begin{figure}[h!]
    \centering

    \begin{subfigure}[t]{0.48\linewidth}
        \centering
        \includegraphics[width=\linewidth]{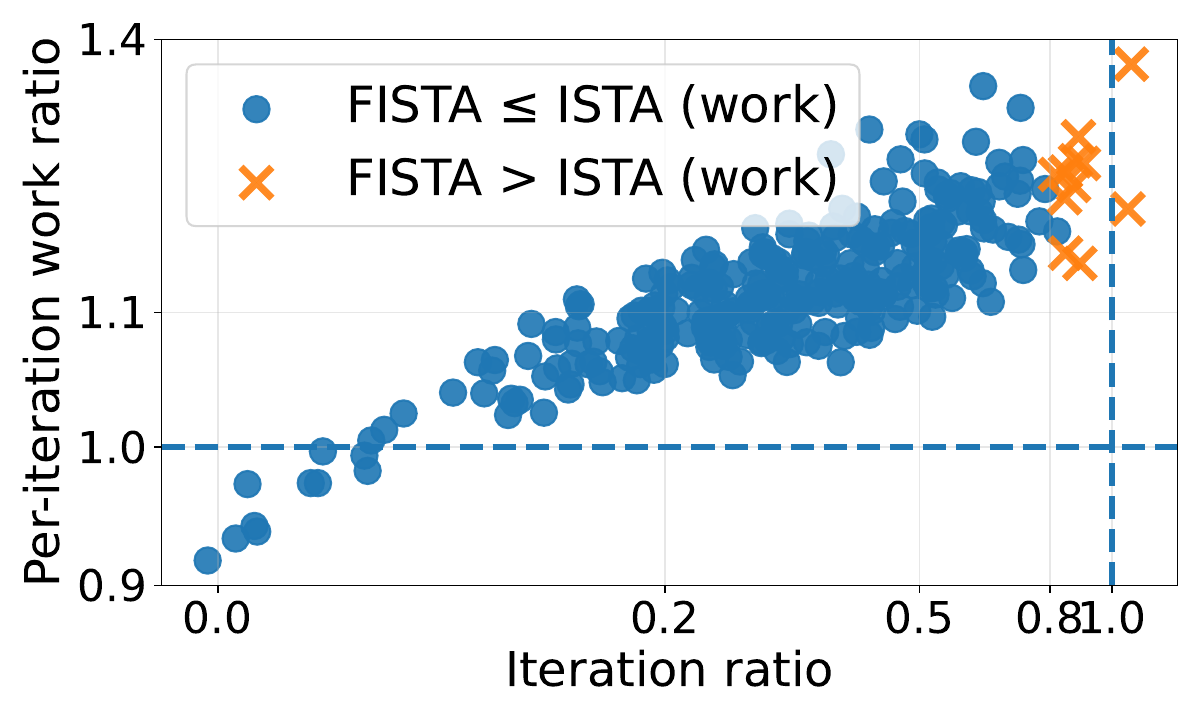}
        \caption{\texttt{com-Amazon}.}
        \label{fig:real_tradeoff_amazon}
    \end{subfigure}\hfill
    \begin{subfigure}[t]{0.48\linewidth}
        \centering
        \includegraphics[width=\linewidth]{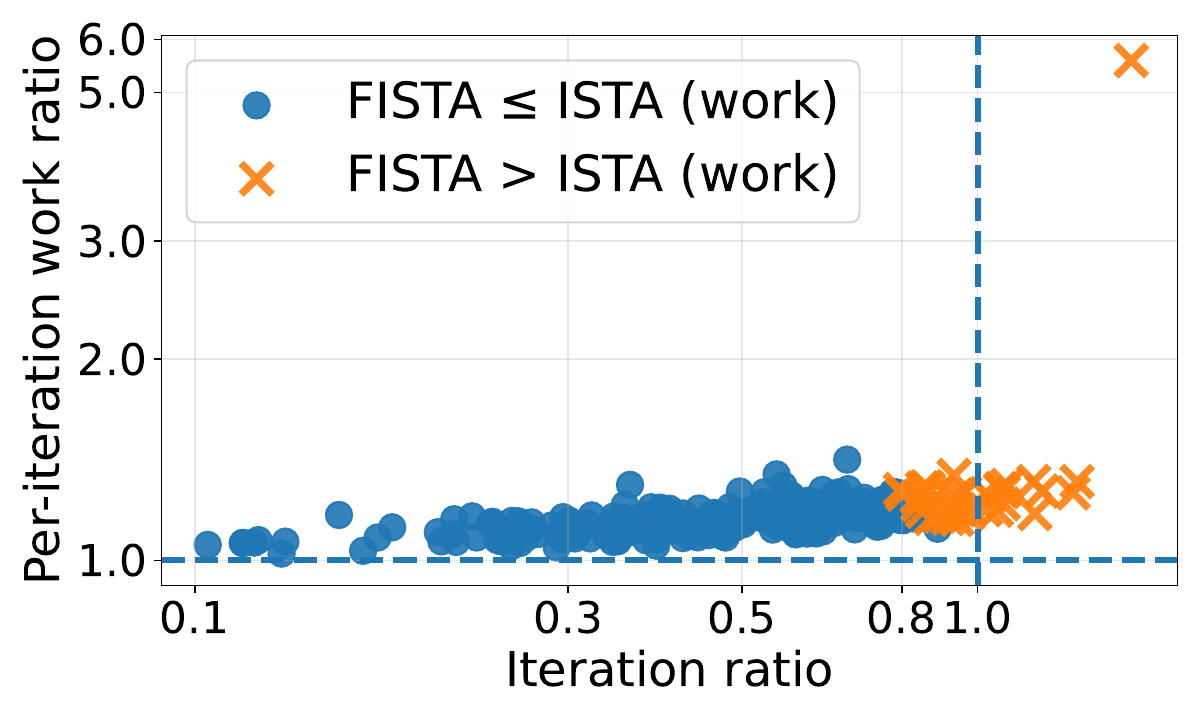}
        \caption{\texttt{com-DBLP}.}
        \label{fig:real_tradeoff_dblp}
    \end{subfigure}

    \vspace{0.6em}

    \begin{subfigure}[t]{0.48\linewidth}
        \centering
        \includegraphics[width=\linewidth]{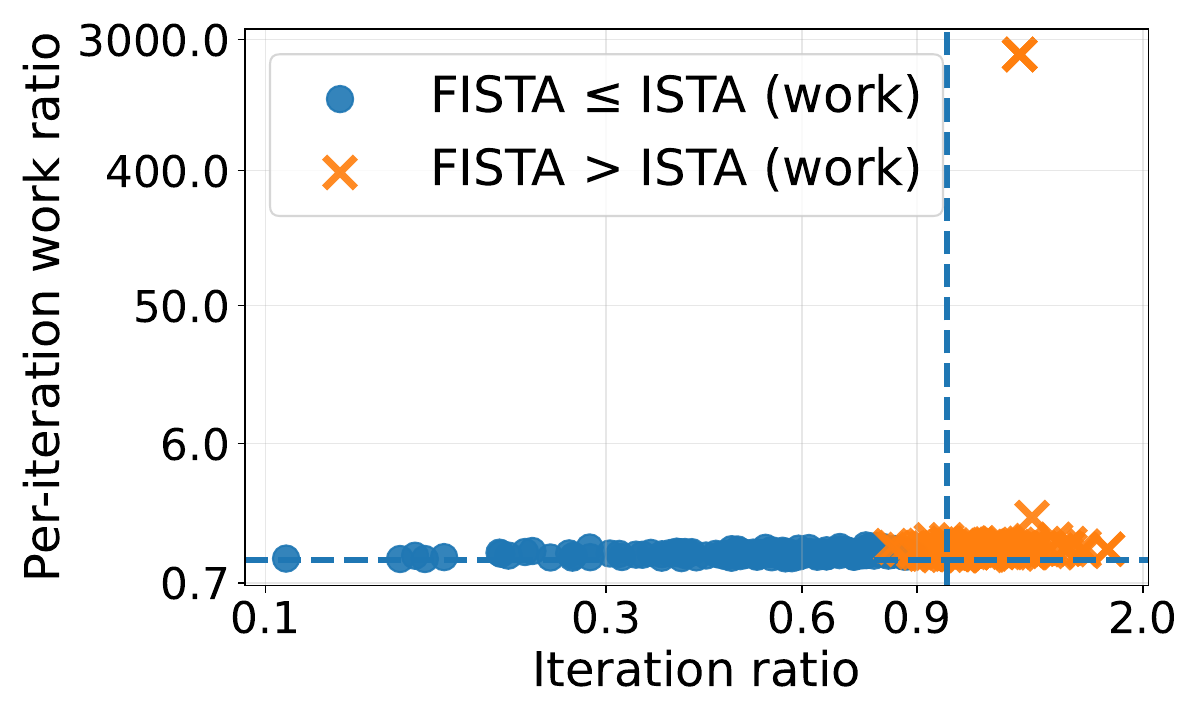}
        \caption{\texttt{com-Youtube}.}
        \label{fig:real_tradeoff_youtube}
    \end{subfigure}\hfill
    \begin{subfigure}[t]{0.48\linewidth}
        \centering
        \includegraphics[width=\linewidth]{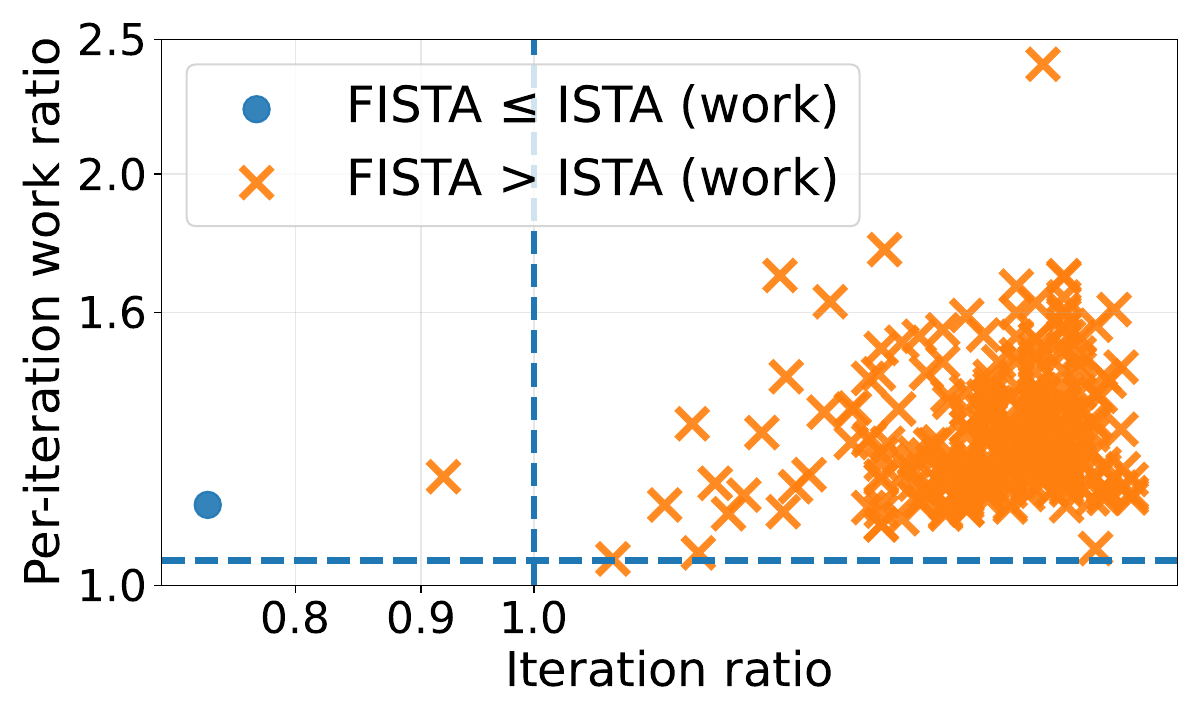}
        \caption{\texttt{com-Orkut}.}
        \label{fig:real_tradeoff_orkut}
    \end{subfigure}

    \caption{\textit{Iterations vs.\ per-iteration work tradeoff.}
    Each point is a seed node (same seeds as in the sweep experiments), at $\alpha=10^{-3}$, $\rho=10^{-4}$, and
    $\varepsilon=10^{-8}$. The $x$-axis is the iteration ratio $N_{\mathrm{F}}/N_{\mathrm{I}}$ and the
    $y$-axis is the per-iteration work ratio $(W_{\mathrm{F}}/N_{\mathrm{F}})/(W_{\mathrm{I}}/N_{\mathrm{I}})$.
    Markers distinguish seeds where FISTA is faster/slower in total work.}
    \label{fig:real_tradeoff_iters_cost}
\end{figure}

\textbf{Degree heterogeneity.}
Because our work metric is degree-weighted, transient activations of even a small number of high-degree nodes can dominate
the locality cost. \Cref{fig:real_degree_ccdf} plots the empirical degree complementary CDF for the four datasets and highlights
the substantially heavier tail of \texttt{com-Orkut} (and, to a lesser extent, \texttt{com-Youtube}), which is consistent with the
larger variability and the small-$\alpha$ slowdowns observed in \Cref{fig:real_work_vs_alpha,fig:real_tradeoff_iters_cost}.

\begin{figure}[h!]
    \centering
    \includegraphics[width=0.55\linewidth]{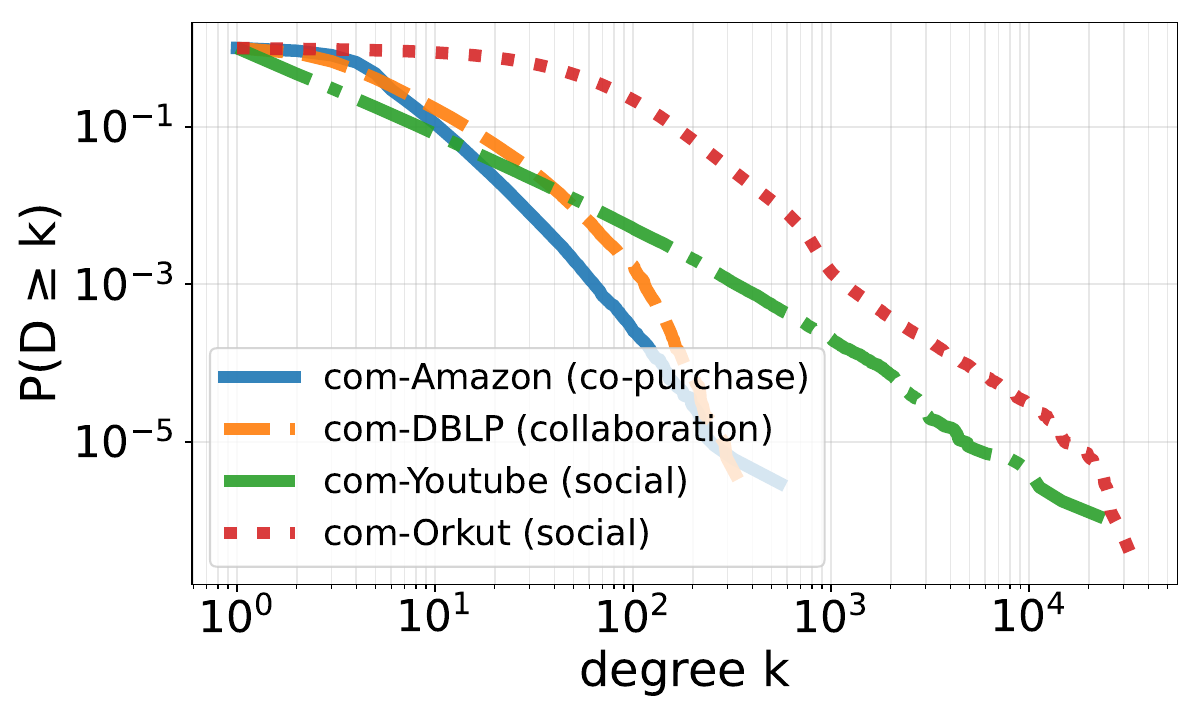}
    \caption{\textit{Degree distributions on the real datasets.}
    The heavier-tailed degree profiles (notably \texttt{com-Orkut}) amplify the impact of transient exploration.}
    \label{fig:real_degree_ccdf}
\end{figure}

\section{AI-assisted development and prompt traceability}
\label{app:ai_prompt_trace}

This paper was developed with the assistance of an interactive large-language-model (LLM) workflow.
The LLM was used as a proof-synthesis and rewriting aid: it generated candidate lemmas, algebraic
manipulations, and \LaTeX\ skeletons, while the human author(s) provided the research direction, imposed
algorithmic constraints, requested specific locality-aware bounds, identified missing assumptions, and
validated (or rejected) intermediate arguments.  The final statements and proofs appearing in the paper
were human-checked and edited for correctness and presentation.

\subsection{Prompt clusters and how they map to results in the paper}
\label{app:prompt_trace_map}

The interactive prompting that led to the final results naturally grouped into a small number of
``prompt clusters.''  Below we summarize each cluster, the key human supervision intervention(s),
and the resulting manuscript artifacts (with cross-references). We use GPT-5.2 Pro (extended thinking) for all results and experiments. 

\paragraph{(P1) ``Standard accelerated algorithm only; avoid expensive subproblems.''}
The initial constraint was to analyze classic one-gradient-per-iteration acceleration (FISTA)
rather than outer-inner schemes or methods that solve expensive auxiliary subproblems.
This constraint fixed the algorithmic object of study and ruled out approaches akin to
expanding-subspace or repeated restricted solves.  It directly shaped the scope of the main runtime result
\Cref{thm:double_reg_work} and the fact that all bounds are expressed in the degree-weighted work model.

\paragraph{(P2) ``Use the margin/KKT slack idea.''}
This idea was suggested by GPT, but we found it useful and, therefore, retained it in the final results.
A key prompt requested a self-contained argument based on a margin parameter.  This produced the
degree-normalized slack definition \Cref{eq:gamma_i} and its operational meaning: an inactive coordinate
can become active at an extrapolated point only if its forward-gradient map deviates from the optimum by
an amount proportional to its slack.  The corresponding quantitative statement is
\Cref{lem:coord_jump}, which is the main bridge from optimality structure to spurious activations.

\paragraph{(P3) ``Transient support is the bottleneck; bound the sum of supports/volumes.''}
A crucial human intervention was to point out that it is not enough to argue eventual identification:
one must control the cumulative degree-weighted work over the entire transient.
This prompted the transition from pointwise identification to a global summation argument:
Cauchy-Schwarz converts ``activation implies a jump'' (from \Cref{lem:coord_jump}) into a bound on
$\sum_k \vol(A_k)$, and geometric contraction of FISTA controls the resulting series.
This is the backbone of the spurious-work bound in the proof of \Cref{thm:double_reg_work}
(see in particular the derivation around \Cref{eq:spurious_work}).

\paragraph{(P4) ``Avoid vacuous bounds when the minimum margin is tiny; use over-regularization.''}
Another human-directed prompt asked how to proceed when the minimum slack can be arbitrarily small,
which would make any bound that depends on $\min_{i\in I^\star}\gamma_i$ meaningless. Thus the idea for analyzing a more regularized problem (``(B)'') but treating nearly-active nodes as part of the target support of the less-regularized problem (``(A)'') was suggested to the LLM. Concretely, this yielded the split in \Cref{lem:two_tier_split}, which uses
regularization-path monotonicity (cf.\ \Cref{lem:path_monotone}) to show that ``small (B)-margin'' nodes
must lie in $S_A$ and should not be charged as spurious.  This is a key input to the work bound
\Cref{thm:double_reg_work}.

\paragraph{(P5) ``Turn the work bound into a running-time bound using $\vol(S^\star)\le 1/\rho$.''}
A prompt explicitly requested that the final complexity be stated in the degree-weighted work model and
use the known sparsity guarantee $\vol(S^\star)\le 1/\rho$.
This guided the decomposition \Cref{eq:running_time} into ``work on the target support'' plus ``spurious
work,'' and it is the reason the first term in \Cref{thm:double_reg_work} scales as
$\widetilde{O}((\rho\sqrt{\alpha})^{-1})$ (up to logarithms).

\paragraph{(P6) ``Give a explicit confinement condition so spurious activations stay local.''}
After the spurious-work summation bound was obtained, a prompt requested a graph-explicit
assumption guaranteeing that all spurious activations remain confined to a boundary set.
This produced the exposure/no-percolation-style sufficient condition formalized as
\Cref{thm:boundary_confinement_exposure}, which is referenced immediately after \Cref{thm:double_reg_work}
to justify the boundary-set hypothesis $\widetilde{A}_k\subseteq\mathcal{B}$.

\paragraph{(P7) ``Identify explicit bad instances where $\gamma$ can be very small (or $0$).''}
To stress-test the margin-based reasoning, a sequence of prompts asked for explicit graphs where the
slack is smaller than $\sqrt{\rho}$ and even $o(\sqrt{\rho})$.  This led to the breakpoint
constructions recorded in \Cref{sec:bad_instances}, including the star graph
(\Cref{lem:star_gamma_small}) and the path graph (\Cref{lem:path_gamma_small}).
These examples motivate why the paper avoids global dependence on $\gamma$ and instead relies on the
over-regularization/two-tier strategy (\Cref{lem:two_tier_split}) together with confinement
(\Cref{thm:boundary_confinement_exposure}).

\paragraph{(P8) ``High-degree non-activation under over-regularization.''}
A later prompt suggested to use the overregularization idea to rule out spurious activations of very high-degree nodes.
This yielded the explicit degree cutoff condition in \Cref{lem:fista_degree_cutoff}, which provides an
additional structural non-activation guarantee that complements the boundary-confinement approach.

\paragraph{(P9) ``Experiments.''} All code was generated by the LLM. However, the authors heavily supervised the process.

\subsection{How much human supervision was required?}
\label{app:prompt_trace_supervision}

The development required human-in-the-loop supervision.
Across roughly two dozen interactive turns, the human prompts performed tasks that the LLM did not do
reliably on its own:
\begin{itemize}[leftmargin=*]
\item \textbf{Problem framing and constraints.}
The human author fixed the algorithmic scope (standard FISTA; no expensive subproblems) and demanded a
locality-aware work bound rather than a standard iteration bound (driving \Cref{thm:double_reg_work}).

\item \textbf{Identifying the real bottleneck.}
A key correction was the insistence that bounding eventual identification is insufficient; one must
bound the sum of transient supports/volumes (leading to the summation argument in the proof of
\Cref{thm:double_reg_work}).

\item \textbf{Stress-testing with counterexamples.}
The human prompts requested explicit worst cases (star and path) and used them to diagnose when naive
$\gamma$-based bounds become vacuous (motivating \Cref{sec:bad_instances} and the over-regularization
strategy used in \Cref{lem:two_tier_split}).

\item \textbf{Assumption checking and proof repair.}
When an intermediate proof relied on an unproven positivity/sign assumption, the human author demanded
either a proof or a repair; this resulted in a revised subgradient/KKT-based certificate (ultimately
not needed for the core theorems, but an important correctness checkpoint).

\item \textbf{\LaTeX\ integration/debugging.}
Compile errors and presentation issues (e.g., list/itemization mistakes) were identified via human
compilation and corrected in subsequent iterations.
\end{itemize}

Overall, the LLM contributed most effectively as a rapid generator of candidate proofs
and algebraic manipulations, while the human supervision was essential for (i) setting the right
target statement, (ii) insisting on the correct work metric, (iii) enforcing locality constraints,
(iv) catching missing assumptions, and (v) selecting which generated material belonged in the final paper.

\section{Formalization of results}\label{sec:formalization}

We formalized the full theorem-level mathematical core of the paper in Lean.
\ifarxiv
The formal versions of the results and their proof can be found here \url{https://github.com/kfoynt/formalized_l1_accelerated}. 
\else
The formal versions of the results and their proof can be found in the supplementary material.
\fi
The development covers the preliminary facts \cref{lemma:initial_gap,fact:fista_rate,lem:yk_distance,lem:path_monotone,lem:prox_grad_monotonicity}, the upper-bound argument \cref{lem:coord_jump,lem:two_tier_split,thm:double_reg_work,thm:boundary_confinement_exposure}, the high-degree non-activation result \cref{lem:fista_degree_cutoff}, the breakpoint constructions \cref{lem:star_gamma_small,lem:path_gamma_small}, and the lower-bound chain \cref{lem:star_leaf_breakpoint,lem:activation_criterion,lem:fista_overshoot,prop:lower_bound}. The experiments and the surrounding expository discussion are not part of the formalization.

The development relies on nine imported statements that are not proved within the project but are either trivial or known from previous work. In the Lean code, these are introduced as axioms in the technical sense of declarations accepted without proof within this development, not as conjectural mathematical assumptions. Concretely, the imported statements are the quadratic expansion of the PageRank quadratic; the strong-convexity gap inequality at a minimizer; the implication ``minimizer implies proximal-gradient fixed point''; the RPPR support-volume bound $\vol(\supp(x^\star))\le 1/\rho$; coordinatewise nonnegativity of the RPPR minimizer; the upper inactive KKT bound $\nabla_i f(x^\star)\le 0$ when $x_i^\star=0$; the strongly-convex FISTA convergence rate; the fact that ISTA iterates stay inside the optimal support; and the linear convergence rate of ISTA. No result specific to the present paper was introduced this way. Once these background facts are imported, the new contributions of the paper, including the complementarity-jump argument, the two-tier split, the work theorem, the confinement theorem, the degree cutoff, the explicit bad instances, and the lower bound, are all proved in Lean.

First, one of the imported statements is purely algebraic: the exact second-order expansion of the quadratic objective. This is a routine identity obtained by expanding a quadratic form, and it could be proved directly from the definitions. Its use as an imported statement is only a bookkeeping choice and it does not hide any substantive mathematical content.

Second, several imported statements are standard facts from first-order convex optimization. The strong-convexity gap inequality is the usual consequence of strong convexity; the proximal-gradient fixed-point characterization is the standard equivalence between optimality and vanishing gradient mapping for convex composite problems, the linear convergence rate of ISTA in the strongly convex case is classical, and the strongly-convex FISTA rate used here is standard textbook material. The original FISTA method is due to \citet{beck2009fast}.

Third, two imported statements are exactly previously published RPPR locality results. We import the support-volume bound $\vol(\supp(x^\star))\le 1/\rho$ and the support containment property for ISTA iterates from the variational analysis of \citet[Theorems~1 and~2]{fountoulakis2019variational}. In our formalization, these facts are used only to translate formally checked iterate-level arguments into degree-weighted work bounds and, in the lower-bound section, to compare FISTA with the known locality guarantee for ISTA. In particular, the FISTA part of the lower bound is formalized directly; only the comparison to ISTA uses these imported RPPR facts.

Finally, the remaining RPPR imported statements, namely minimizer nonnegativity and the upper inactive KKT bound, are also standard properties of the $\ell_1$-regularized PageRank objective for nonnegative seeds. They are explicit in the RPPR literature, see for instance the nonnegativity and KKT lemmas in \citet{ha2021statistical}, and they are consistent with the variational characterization in \citet{fountoulakis2019variational}. These imported statements simply expose the usual sign information at the minimizer in a compact form.

Overall, the formalization should be interpreted as follows. The imported statements collect generic convex-analysis facts and previously established RPPR theorems, while the paper's new acceleration-specific arguments are checked end to end in Lean.

\end{document}

%% file: config.tex
\usepackage{enumitem}
\usepackage{amssymb}
\usepackage{amsmath}

\usepackage{mathrsfs}		
\usepackage{dsfont}

\usepackage{url}					
\hypersetup{colorlinks=true, urlcolor=blue, linktoc = all}
\usepackage{booktabs}

\usepackage{hhline}
\usepackage{multirow}

\usepackage[utf8]{inputenc} 
\usepackage[T1]{fontenc}    

\usepackage{multicol}
\usepackage{silence}
\usepackage{amsfonts,thmtools}
\usepackage{mathtools}
\usepackage{xparse}
\usepackage{enumitem} 
\usepackage{etoolbox}
\usepackage{complexity}
\usepackage{svg}
\usepackage{xcolor, soul}
\usepackage{tablefootnote}
\usepackage[bb=boondox]{mathalpha} 
\definecolor{lightgrey}{rgb}{0.9,0.9,0.9}
\sethlcolor{lightgrey}
\definecolor{mygray}{rgb}{0.6,0.6,0.6}

\crefname{equation}{}{}
\crefname{enumi}{Statement}{Statements} %
\crefname{equation}{}{}
\crefname{enumi}{Statement}{Statements} %

\newtheorem{fact}[theorem]{Fact}

\newcommand{\jmlrBlackBox}{\rule{1.5ex}{1.5ex}}

\newcommand{\jmlrQED}{\hfill\jmlrBlackBox\par\bigskip}
\providecommand{\proofname}{Proof}
\newenvironment{proof}%
{%
 \par\noindent{\bfseries\upshape \proofname\ }%
}%
{\jmlrQED}

\usepackage{tikz}			
\usepackage{times}

\newcommand{\norm}[1]{\| #1 \|} 
\newcommand{\abs}[1]{\lvert #1 \rvert} 
 
\newcommand{\absl}[1]{\left\lvert #1 \right\rvert} 
\newcommand*\circledaux[1]{\tikz[baseline=(char.base)]{
    \node[shape=circle,draw,inner sep=0.8pt] (char) {#1};}}

\NewDocumentCommand{\circled}{m o }{%
    \IfNoValueTF{#2}{\circledaux{#1}}{\stackrel{\circledaux{#1}}{#2}}%
}

\usepackage{cancel}
\newcommand\Ccancel[2][black]{
    \let\OldcancelColor\CancelColor
    \renewcommand\CancelColor{\color{#1}}
    \cancel{#2}
    \renewcommand\CancelColor{\OldcancelColor}
}

\newcommand{\defi}{:=}

\renewcommand*\R{\mathbb{R}}                              %
\let\epsilon\varepsilon

\newcommand{\ones}{\mathds{1}}                           %

\usepackage{pifont} 
\DeclareMathOperator*{\argmin}{arg\,min}                %

\makeatletter
\renewcommand\paragraph{\@startsection{paragraph}{4}{\z@}%
                                    {0ex \@plus0.5ex \@minus.2ex}%
                                    {-1em}%
                                    {\normalfont\normalsize\bfseries}}
\makeatother

\definecolor{labelkey}{rgb}{0,0.08,0.45}
\definecolor{refkey}{rgb}{0,0.6,0.0}
\definecolor{Brown}{rgb}{0.45,0.0,0.05}
\definecolor{dgreen}{rgb}{0.00,0.49,0.00}
\definecolor{dblue}{rgb}{0,0.08,0.75}
\definecolor{ffwwqq}{rgb}{1.,0.4,0.}
\definecolor{qqzzqq}{rgb}{0.,0.6,0.}
\definecolor{qqqqff}{rgb}{0.,0.,1.}
\definecolor{dred}{HTML}{D90404}
\definecolor{orng}{HTML}{D35400}
\definecolor{cb-black}      {RGB}{  0,   0,   0}
\definecolor{cb-blue-green} {RGB}{  0,  073,  073}
\definecolor{cb-green-sea}  {RGB}{  0, 146, 146}
\definecolor{cb-rose}       {RGB}{255, 109, 182}
\definecolor{cb-salmon-pink}{RGB}{255, 182, 119}
\definecolor{cb-purple}     {RGB}{ 73,   0, 146}
\definecolor{cb-blue}       {RGB}{ 0, 109, 219}
\definecolor{cb-lilac}      {RGB}{182, 109, 255}
\definecolor{cb-blue-sky}   {RGB}{109, 182, 255}
\definecolor{cb-blue-light} {RGB}{182, 219, 255}
\definecolor{cb-burgundy}   {RGB}{146,   0,   0}
\definecolor{cb-brown}      {RGB}{146,  73,   0}
\definecolor{cb-clay}       {RGB}{219, 209,   0}
\definecolor{cb-green-lime} {RGB}{ 36, 255,  36}
\definecolor{cb-yellow}     {RGB}{255, 255, 109}
\definecolor{bred}{HTML}{FF0000}
\definecolor{bpurp}{HTML}{BF00BF}
\definecolor{bblu}{HTML}{0000FF}
\definecolor{bcyan}{HTML}{00BFBF}
\definecolor{byellow}{HTML}{BFBF00}
\definecolor{bgreen}{HTML}{008000}

\newcommand{\inlinesmash}[1]{\smash{$#1$}}

%% file: algorithm_config.tex
\usepackage{algorithm}
\usepackage{algcompatible}
\algnewcommand{\lst}{\texttt{lst}}
\algnewcommand{\slst}{\texttt{slst}}
\algnewcommand{\SEND}{\textbf{send}}

\newsavebox{\algleft}
\newsavebox{\algright}

\makeatletter
\newcounter{algorithmicH}%
\let\oldalgorithmic\algorithmic
\renewcommand{\algorithmic}{%
  \stepcounter{algorithmicH}%
  \oldalgorithmic}%
\renewcommand{\theHALG@line}{ALG@line.\thealgorithmicH.\arabic{ALG@line}}
\makeatother

%% file: bibliography_config.tex
\usepackage[natbib, backend=biber, maxcitenames=3, minalphanames=3, maxbibnames=99, style=alphabetic, hyperref, backref, useprefix=true, uniquename=false, doi=false,url=false,eprint=false]{biblatex} 

\usepackage{csquotes}               
\bibliography{refs}        

\DeclareCiteCommand{\cite}
  {\usebibmacro{prenote}}
  {\usebibmacro{citeindex}%
   \printtext[bibhyperref]{\usebibmacro{cite}}}
  {\multicitedelim}
  {\usebibmacro{postnote}}

\DeclareCiteCommand*{\cite}
  {\usebibmacro{prenote}}
  {\usebibmacro{citeindex}%
   \printtext[bibhyperref]{\usebibmacro{citeyear}}}
  {\multicitedelim}
  {\usebibmacro{postnote}}

\DeclareCiteCommand{\parencite}[\mkbibparens]
  {\usebibmacro{prenote}}
  {\usebibmacro{citeindex}%
    \printtext[bibhyperref]{\usebibmacro{cite}}}
  {\multicitedelim}
  {\usebibmacro{postnote}}

\DeclareCiteCommand*{\parencite}[\mkbibparens]
  {\usebibmacro{prenote}}
  {\usebibmacro{citeindex}%
    \printtext[bibhyperref]{\usebibmacro{citeyear}}}
  {\multicitedelim}
  {\usebibmacro{postnote}}

\DeclareCiteCommand{\citeauthor}
  {\usebibmacro{prenote}}
  {\ifciteindex
     {\indexnames{labelname}}
     {}%
   \printtext[bibhyperref]{\printnames{labelname}}}
  {\multicitedelim}
  {\usebibmacro{postnote}}

\DeclareCiteCommand{\footcite}[\mkbibfootnote]
  {\usebibmacro{prenote}}
  {\usebibmacro{citeindex}%
  \printtext[bibhyperref]{ \usebibmacro{cite}}}
  {\multicitedelim}
  {\usebibmacro{postnote}}

\DeclareCiteCommand{\footcitetext}[\mkbibfootnotetext]
  {\usebibmacro{prenote}}
  {\usebibmacro{citeindex}%
   \printtext[bibhyperref]{\usebibmacro{cite}}}
  {\multicitedelim}
  {\usebibmacro{postnote}}

\DeclareCiteCommand{\textcite}
  {\boolfalse{cbx:parens}}
  {\usebibmacro{citeindex}%
   \printtext[bibhyperref]{\usebibmacro{textcite}}}
  {\ifbool{cbx:parens}
     {\bibcloseparen\global\boolfalse{cbx:parens}}
     {}%
   \multicitedelim}
  {\usebibmacro{textcite:postnote}}

\newbibmacro{string+doiurlisbn}[1]{%
  \iffieldundef{doi}{%
    \iffieldundef{url}{%
      \iffieldundef{isbn}{%
        \iffieldundef{issn}{%
          #1%
        }{%
          \href{http://books.google.com/books?vid=ISSN\thefield{issn}}{#1}%
        }%
      }{%
        \href{http://books.google.com/books?vid=ISBN\thefield{isbn}}{#1}%
      }%
    }{%
      \href{\thefield{url}}{#1}%
    }%
  }{%
    \href{https://doi.org/\thefield{doi}}{#1}%
  }%
}

\DeclareFieldFormat{title}{\usebibmacro{string+doiurlisbn}{\mkbibemph{#1}}}
\DeclareFieldFormat[article,incollection,inproceedings]{title}%
    {\usebibmacro{string+doiurlisbn}{#1}}